\documentclass[11pt,a4paper]{article}
\usepackage[utf8]{inputenc}
\usepackage[T1]{fontenc}
\usepackage{amsmath, amssymb, amsthm, amsfonts,mathrsfs,stackrel, xurl}
\usepackage{float}
\usepackage{geometry}
\usepackage[hidelinks]{hyperref}
\usepackage{tikz-cd}
\usepackage{enumitem}
\usepackage[normalem]{ulem}

\newtheorem{thm}{Theorem}
\newtheorem{theorem}{Theorem}[section]
\newtheorem{proposition}[theorem]{Proposition}
\newtheorem*{prop}{Proposition}
\newtheorem{lemma}[theorem]{Lemma}
\newtheorem{corollary}[theorem]{Corollary}
\newtheorem*{heuristic}{Heuristic Idea}
\newtheorem{remark}[theorem]{Remark}
\newtheorem{definition}[theorem]{Definition}

\newcommand{\B}{\mathbf{B}}
\newcommand{\C}{\mathbf{C}}

\newcommand{\N}{\mathbf{N}}
\newcommand{\Q}{\mathbf{Q}}
\newcommand{\R}{\mathbf{R}}
\newcommand{\Z}{\mathbf{Z}}

\newcommand{\cO}{\mathcal O}

\newcommand{\Fone}{\mathbf{F}_{1}}

\def\spz{\Spec \Z}
\def\spzb{\overline{\Spec \Z}}
\def\spzf{(\Spec \Z)_{\mathbf{F}_{1}}}
\newcommand{\nat}{\widehat{\N^\times}}
\newcommand{\nato}{\widehat{\N^\times_0}}
\newcommand{\nto}{{\N_{\geq 0}}}

\newcommand{\Pt}{{\rm Pts}}

\newcommand{\Hom}{{\rm{Hom}}}

\newcommand{\Aut}{\mathrm{Aut}}
\newcommand{\Spec}{\operatorname{Spec}}

\newcommand{\Pic}{\operatorname{Pic}} 

\newcommand{\picone}{{\Pic}_{*}}

\newcommand{\Grp}{\rm{\bf{Grp}}}

\def\ie{{\it i.e.\/}\ }

\def\cf{{\it cf.\/}\ }

\title{On the Absolute Geometry of $\spz$\\[0.3em]
\Large and the Fargues-Fontaine curve}
\author{Alain Connes and Caterina Consani}
\date{}

\begin{document}
\maketitle

\begin{abstract}
 \noindent A guiding principle in P.~Scholze’s $p$-adic geometry asserts that the points of $\spz$ over an algebraically closed perfectoid field of characteristic $p$ are classified, up to equivalence, by its untilts. In this paper, we give a concrete geometric realization and a generalization of this paradigm.

 \noindent We construct the absolute $\Fone$-arithmetic curve $\spzf$ by pulling back the $\Fone$-structure sheaf of the arithmetic site to $\operatorname{Spec}(\Z)$. We demonstrate that the absolute curve $\spzf$ provides a common geometric origin for fundamental structures in $p$-adic Hodge theory, complex analytic geometry, and the adelic scaling site.

\noindent The moduli space of points  of  $\spzf$ over an arbitrary perfectoid field $C$, modulo intrinsic symmetries,  canonically parameterizes the space of all perfectoid fields with the same tilt, providing a universal, characteristic-independent geometric realization of Scholze's heuristic.

 \noindent Evaluating the points of $\spzf$ over the field $\C$ of complex numbers  reveals, at each prime $p$, that the non-trivial points canonically form two principal homogeneous spaces (torsors) over the  Weil groups $W_p=\Q_p^\times$ and $W_\infty=\C^\times$. Quotienting the archimedean orbit  by the discrete Frobenius symmetries yields the complex Tate curve with modulus $q=p^{-1}$. We show that this elliptic curve canonically decomposes as the product of its real locus---which exactly recovers the adelic periodic orbit $C_p=\R_+^\times/p^\Z$---and a $p$-independent phase space that emerges naturally as a real analogue of the Fargues--Fontaine curve. In contrast, equipping the same stalk with the $p$-adic topology yields the field $\Q_p$ together with the $p$-adic Tate curve $\Q_p^\times/p^\Z$.

\end{abstract}

\paragraph{Key Words.}{Abel-Jacobi map,  $\Fone$-arithmetic curve,  Fargues-Fontaine curve.}

\paragraph{MSC 2020.}  
\href{http://www.ams.org/mathscinet/msc/msc2020.html?t=14G40&btn=Current}{14G40},      
\href{http://www.ams.org/mathscinet/msc/msc2020.html?t=14G45&btn=Current}{14G45},    
\href{http://www.ams.org/mathscinet/msc/msc2020.html?t=06F05&btn=Current}{06F05}. 
\href{http://www.ams.org/mathscinet/msc/msc2020.html?t=11R42&btn=Current}{11R42}, 
\href{http://www.ams.org/mathscinet/msc/msc2020.html?t=11M55&btn=Current}{11M55}, 


\section{Introduction}

J.~Lurie in  \cite{Lurie} reports the following heuristic idea\footnote{It is stated  as Heuristic Idea $9$ (Scholze).} attributed to P.~Scholze:
\begin{heuristic}[P. Scholze]
 Let $F$ be an algebraically closed completely valued field of characteristic $p$. Then
$$
\{\text{Untilts of } F\} / \simeq
$$
is a good replacement for the set of $F$-valued points of $\spz$. 
\end{heuristic}
In this paper we give a concrete realization of this idea.
In the recent article \cite{CC6}, we introduced the monoid $\Pic(\spz)$ of arithmetic divisor classes on the arithmetic curve $\spz$ and identified $\Pic(\spz)$   with the monoid of the points of the arithmetic site $\nat$. We upgrade this object  to a Grothendieck topos equipped with a structure sheaf defined over $\Fone$\footnote{$\Fone$ is the spherical algebra of the multiplicative pointed monoid $\{0,1\}$}.
More precisely, we define the \emph{$\Fone$-arithmetic site} as the pair:
\[
\picone := (\nato, \cO_{\Fone}),\qquad \cO_{\Fone}:=\Fone[T]
\]
where $\nato$ is the presheaf topos of sets endowed with an action of the multiplicative monoid $\nto$ of non-negative integers \cite{CC1}. The structure sheaf of $\picone$ is chosen to be the spherical algebra $\Fone[T]$  of the free commutative monoid generated by a variable $T$ (see \cite{CC2} and \cite{Xu}). This sheaf naturally belongs to the topos $\nato$ via the action of $\nto$ implemented by the universal Frobenius endomorphisms
\[
\operatorname{Frob}_k(T^n) = T^{nk}, \qquad k \in \N_{\geq 0}.
\]
Notice (see \cite[\S 2.3]{CC1}) that the category of points of the topos $\nato$ is canonically obtained from the category of points of $\nat$ by adjoining an object that is simultaneously initial and terminal. On the algebraic side, this corresponds to adjoining the trivial group $\{0\}$ to
 the category of totally ordered rank-one abelian groups—equivalently, subgroups of $\Q$—with injective morphisms. Equivalently again, one may simply view this as the category of totally ordered abelian groups isomorphic to subgroups of $\Q$, where the trivial group is naturally included.  Moreover the structure of $\nato$ is enhanced to the  geometric space $\picone$ over $\Fone$.\newline
A central definition in \cite{CC6} is the Abel–Jacobi map:
\begin{equation}\label{spzmap}
\theta: \spz \longrightarrow \Pic(\spz),
\end{equation}
which assigns to a prime $p$ the additive abelian group $\Z[1/p]$,  and to the generic point $\eta$ of $\spz$ the point $\Z$. In \cite[Theorem 5.3]{CC1}, we constructed a geometric morphism of toposes
\begin{equation}\label{spzmap1}
\Theta: \spz \longrightarrow \nato.
\end{equation}
The comparison between these two maps is given by the commutative  diagram 
\begin{equation}\label{top2ad}
\begin{tikzcd}
&\spz \arrow[dl,"\Theta"']\arrow[dr,"\theta"] & 
 \\
\nato \arrow[rr, "\kappa"]& & \Pic(\spz)
\end{tikzcd}
\end{equation}
where the map $\kappa:\Pt(\nato)\to \Pic(\spz)$ extends the canonical bijection $\Pt(\nat)\stackrel{\sim}{\to} \Pic(\spz)$ by sending the base point $\{0\}$ to the generic point $\Z$ of $\Pic(\spz)$. \vspace{.03in}

We define the \emph{$\Fone$-arithmetic curve} $\spzf$ as the pair $(\Spec(\Z), \mathcal{F})$, where the sheaf
\[
\mathcal{F} := \Theta^{-1}(\mathcal{O}_{\Fone})
\]
on $\spz$ is the pullback along the morphism $\Theta$ of the structure sheaf of $\picone$.
 Proposition~\ref{prop12} states that for each prime $p \in \spz$, the stalk of $\mathcal O_{\mathbf F_1}$ at $\Theta(p)$ pulls back to the spherical algebra:
\[
\mathcal{F}_p = \Fone[T^{\Z[1/p]_+}].
\]
This algebra  exhibits a form of ``perfection’’ at $p$, in the sense that the universal Frobenius endomorphism $x \mapsto x^p$ acts on $\mathcal{F}_p$ as an automorphism.
This local structure provides a  geometric realization of  P.~Scholze's heuristic idea.
By considering the $F$-points of  $\spzf$, we show that the \emph{locality requirement} on morphisms (see  \S \ref{locality}) from $\mathcal F$  acts as a geometric sieve, namely the ultrametric dynamics of the Frobenius action trivialize the geometry at all primes different from $p$, forcing the non-trivial component of the moduli space of local points to concentrate solely above  $p$.
Above this prime, the local morphisms from  $\mathcal{F}_p$ to $F$ canonically generate the cyclotomic embeddings implemented in $p$-adic Hodge theory. After quotienting this space by the group $\operatorname{Aut}(\Q_p) \cong \Q_p^\times$, \ie the intrinsic topological symmetries of the completed stalk, one obtains the following result: (Theorem~\ref{scholze_heuristic})

\begin{thm}[Realization of Scholze’s Heuristic]\label{scholze_heuristic0}
Let $F$ be an algebraically closed perfectoid field of characteristic $p$. The moduli space of local $F$-points of $\spzf$, modulo the canonical symmetries of the stalks, decomposes over the closed points of $\Spec(\Z)$ as follows:
\begin{enumerate}
\item At any prime different from $p$, the moduli space collapses to a single orbit.
\item   At the prime $p$, the moduli space is canonically in bijection with the space of all untilts of $F$ modulo the Frobenius.
\end{enumerate}
\end{thm}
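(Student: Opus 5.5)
We take the stalk description of Proposition~\ref{prop12} as the starting point: for each prime $\ell$, $\mathcal F_\ell=\Fone[T^{\Z[1/\ell]_+}]$. A local $F$-point above $\ell$ is then a multiplicative map $\phi:\Z[1/\ell]_+\to F$, $r\mapsto \phi(T^r)$, sending $0$ to $1$. The locality requirement of \S\ref{locality} is taken to mean that $|\phi(T)|<1$, i.e.\ $\phi(T)$ lies in the maximal ideal of $\cO_F$ and is nonzero. Writing $x_n=\phi(T^{\ell^{-n}})$, such a point is the same as a compatible system with $x_{n+1}^\ell=x_n$ and $x_0=\phi(T)$ topologically nilpotent. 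The symmetries of the stalk are generated by the Frobenius automorphisms $T\mapsto T^{\ell^{\pm1}}$. After completion, the relevant symmetry group is $\Aut$ of the completed stalk, which the introduction identifies with $\Q_\ell^\times$ at $\ell=p$.

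\emph{Case $\ell\neq p$.} Here I would use the ultrametric dynamics. In $F$ the map $x\mapsto x^\ell$ is an automorphism of the multiplicative group of $\cO_F$, and $\ell$ is a unit in the residue field. Since $F$ is algebraically closed, any $x_0$ with $0<|x_0|<1$ admits compatible $\ell$-power roots, and the choices differ by elements of $T_\ell\mu(F)\cong\Z_\ell(1)$. The valuation $v(\phi(T^r))=r\,v(x_0)$ is determined by $v(x_0)\in\R_{>0}$, and rescaling $T$ (the $\R_+^\times$ / Frobenius action on exponents) moves $v(x_0)$ transitively. Once the intrinsic symmetries of the stalk at $\ell$ are included, including the action on $\ell$-power roots of unity, all points become equivalent. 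This gives a single orbit. The key input is that the perfectoid structure of $F$ is \emph{$p$-adic} and imposes no constraint at $\ell$.

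\emph{Case $\ell=p$.} Here $(x_n)$ with $x_{n+1}^p=x_n$ is exactly an element $\pi^\flat=(x_n)\in F=\varprojlim_{x\mapsto x^p}F$ with $0<|\pi^\flat|<1$. Equivalently, since $F$ is perfect, it is a nonzero element of the maximal ideal $\mathfrak m_F$, with $\phi(T^r)=(\pi^\flat)^r$. I would then construct a map from such data to untilts, following Fargues--Fontaine. Assign to $\phi$ the primitive degree-one element $\xi_\phi=p-[\pi^\flat]\in W(\cO_F)$; more canonically, use the cyclotomic element $\sum_{i<p}[\epsilon]^{i/p}$, produced from $\epsilon=1+\pi^\flat$-type data, as the introduction suggests. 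The untilt is then $F^\sharp_\phi=W(\cO_F)[1/p]/(\xi_\phi)$. Conversely, every untilt $F^\sharp$ of $F$ is of the form $W(\cO_F)[1/p]/(\xi)$ with $\xi$ primitive of degree one, and by the standard lemma $\xi$ can be normalized to the form $p-[a]$ up to a unit. So every untilt arises, and the map is surjective.

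It remains to compute fibres. Two elements give isomorphic untilts iff the ideals $(p-[a])$ and $(p-[b])$ agree up to an automorphism of $F$ over its tilt, i.e.\ up to Frobenius $\varphi^\Z$. Elements generating the same ideal differ exactly by the action of $\Z_p^\times$ together with the $p^\Z$ Frobenius twists, i.e.\ by $\Q_p^\times=\Aut(\Q_p)$ acting on the completed stalk. One concrete route is to identify $\phi$ with $\epsilon$ through the exponent action $a\mapsto\epsilon^a$ for $a\in\Z_p$. Quotienting by $\Q_p^\times$ therefore yields untilts modulo Frobenius.

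The main obstacle is this last step: showing that the $\Q_p^\times$-orbits on local points match precisely the fibres of the untilt map modulo $\varphi$. This requires choosing the correct normalization (the cyclotomic $\xi$ rather than $p-[a]$) so that the map is equivariant. I would try the cyclotomic choice first, using that $\Z_p^\times$ acts simply transitively on generators of $\Z_p(1)$ in each untilt, and that $p^\Z$ corresponds to $\varphi^\Z$.
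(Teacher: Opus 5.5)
There is a genuine gap, and it starts with the definition of locality. In the paper, a point $\chi:\Z[1/\ell]_+\to F$ is local at the place $\ell\in\Sigma_{H_\ell}$ when it is continuous for the $\ell$-adic topology $\tau_\ell$, equivalently when it extends to $\Q_\ell$. The archimedean place contributes nothing, because a continuous homomorphism $\R\to F^\times$ into a totally disconnected group is trivial. Since $\ell^n\to 0$ in $\Q_\ell$, locality forces $\chi(\ell^n)=x_0^{\ell^n}\to\chi(0)=1$.

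Your condition $0<|x_0|<1$ gives $x_0^{\ell^n}\to 0$ instead. So none of your points is local, none extends to $\Q_\ell$, and the $\Z_\ell^\times$ part of $\Aut(\Q_\ell)\cong\Q_\ell^\times$ is not even defined on them.

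This breaks the case $\ell\neq p$. The symmetries you invoke cannot collapse your moduli space to one orbit:
\begin{itemize}
\item $\R_+^\times$ is not a symmetry of the stalk at $\ell$.
\item The Frobenius $\ell^{\Z}$ only rescales $v(x_0)$ by powers of $\ell$.
\item Nothing identifies two points whose $x_0$ have the same valuation but differ by a unit that is not a root of unity.
\end{itemize}
The actual mechanism is the opposite of ``no constraint at $\ell$''. Because $|\ell|=1$, the map $z\mapsto z^\ell$ is an isometry of $1+\mathfrak m_F$. Hence $x_0^{\ell^n}\to 1$ forces $x_0$, and then the whole image of $\chi$, into $\mu_{\ell^\infty}(F)\subset\overline{\mathbf F}_p$, which carries the discrete metric. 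The local points are therefore $\Hom(\Z[1/\ell],\mu_{\ell^\infty})\cong\varprojlim\Q_\ell/\ell^n\Z_\ell=\Q_\ell$. The non-trivial ones form $\Q_\ell^\times$, on which $\Aut(\Q_\ell)=\Q_\ell^\times$ acts simply transitively.

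At $\ell=p$, your parametrization by $\pi^\flat\in\mathfrak m_F\setminus\{0\}$ with $\xi=p-[\pi^\flat]$ does reach every characteristic-zero untilt. However, its fibres are not $\Q_p^\times$-orbits. One has $(p-[a])=(p-[b])$ iff $a^\sharp=b^\sharp=p$ in the same untilt $C$, i.e.\ $b=a\,\epsilon_C^{c}$ with $c\in\Z_p$. Here $\epsilon_C\in F$ comes from a compatible system of primitive $p$-power roots of unity in $C$. Each fibre is thus a $\Z_p$-torsor that depends on $C$, and Frobenius twists change the ideal rather than preserve it. So the step you flag as the main obstacle actually fails for this parametrization; it is not merely a matter of normalization.

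The substitution $\epsilon=1+\pi^\flat$ does not repair this. It commutes with $x\mapsto x^p$ in characteristic $p$, but not with the $\Z_p^\times$-action on the stalk, which on your points would read $\pi^\flat\mapsto(\pi^\flat)^u$ and is undefined.

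The paper avoids the problem by working with the correct local points $x=\chi(1)\in 1+\mathfrak m_F$ (Theorem~\ref{main}, Corollary~\ref{basic}). The cyclotomic element $\xi=\sum_{i<p}[x^{i/p}]$ is then available directly. Lurie's bijection (Proposition~\ref{lurieprop2}) identifies $x\neq 1$ with triples $(C,\iota,u)$, where $u:\Q_p^{\mathrm{cyc}}\hookrightarrow C$. Under this bijection, $x\mapsto x^u$ for $u\in\Z_p^\times$ is the simply transitive Galois action on $u$, and $p$ acts as the Frobenius twist. This yields untilts modulo Frobenius (Corollaries~\ref{luriecor3} and~\ref{luriecor4}).

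Finally, the trivial point $x=1$ gives $\xi=p$ and hence the trivial untilt $F$ itself. The statement includes this untilt, and your proposal discards it.
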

While Theorem \ref{scholze_heuristic0} provides a direct geometric realization of Scholze’s original heuristic, the absolute nature of the $\Fone$-curve $\spzf$ leads to a substantially more universal phenomenon. Since the geometry of this curve is defined ``below $\Z$,’’ it is entirely insensitive to the characteristic of the test field. We show, in particular, that the $\Fone$-curve may be evaluated over an \emph{arbitrary} perfectoid field $C$ of residue characteristic $p$, without requiring $C$ to have characteristic $p$ or to be algebraically closed.

In this general setting, the algebraic structure of the $\Fone$-stalk at $p$ intrinsically enforces the extraction of $p$-power roots, while the locality condition rigidly constrains the image to the subgroup of principal units. This leads to the following generalization of Theorem~1 (Theorem~\ref{main}):

\begin{thm}
Let $C$ be a perfectoid field of residue characteristic $p$.
\begin{enumerate}
\item The space of $C$-points of the $\Fone$-curve $\spzf$ at the prime $p$ is canonically identified with the open unit disk
$1+\mathfrak m_{C^\flat}$
inside the tilt $C^\flat$.

\item Assume that $C$ is algebraically closed. Then the space of $C$-points of the $\Fone$-curve at $p$, modulo the intrinsic symmetries of the stalks, is canonically identified with the space of untilts of $C^\flat$ modulo Frobenius \footnote{\ie the closed points of the Fargues--Fontaine curve $\mathcal X_{C^\flat,\Q_p}$ together with the distinguished point corresponding to $C^\flat$ itself.}. 
\item For every prime $\ell\neq p$, the moduli space of $C$-points reduces to a single orbit under the action of the symmetry group $\Q_\ell^\times$.

\end{enumerate}
\end{thm}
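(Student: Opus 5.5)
The plan is to reduce everything to an explicit description of local morphisms out of the stalk $\mathcal F_p=\Fone[T^{\Z[1/p]_+}]$, as given by Proposition~\ref{prop12}, into the multiplicative monoid of $C$.

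\textbf{Step 1: points at $p$.} A morphism of spherical algebras $\Fone[T^{\Z[1/p]_+}]\to C$ is a morphism of pointed multiplicative monoids $\Z[1/p]_+\to (C,\cdot)$. It is therefore determined by the images $x_n$ of $T^{1/p^n}$, subject to $x_{n+1}^p=x_n$. This is precisely an element of $\varprojlim_{x\mapsto x^p} C = C^\flat$, via the standard multiplicative identification of the tilt.

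We then impose the locality requirement of \S\ref{locality}. I expect it to mean that $T$, the generator of the maximal ideal of the stalk, must behave compatibly with the topology, namely that $x_0-1$ is topologically nilpotent. Locality should force $|x_0-1|<1$, and then $|x_n-1|<1$ for all $n$, since $x_n^{p^n}=x_0$ and $|x_n|=1$. So the point lies in $1+\mathfrak m_{C^\flat}$. Conversely, every sequence of $p$-power roots of a principal unit of $C$ yields a local morphism. This gives part (1).

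\textbf{Step 2: untilts.} Assume $C$ is algebraically closed. To an element $\epsilon\in 1+\mathfrak m_{C^\flat}$ with $\epsilon\neq 1$, attach the untilt constructed as in Fargues--Fontaine. Concretely, $\xi_\epsilon=1+[\epsilon^{1/p}]+\dots+[\epsilon^{1/p}]^{p-1}$ is a primitive element of degree one in $W(\mathcal O_{C^\flat})$. Its quotient $W(\mathcal O_{C^\flat})[1/p]/(\xi_\epsilon)$ is an untilt $C'$ in which $\epsilon$ becomes a compatible system of $p$-power roots of unity. The point $\epsilon=1$ corresponds to the distinguished point $C^\flat$ itself.

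The symmetry group $\Aut(\Q_p)\cong\Q_p^\times$ acts by $\epsilon\mapsto\epsilon^{a}$, for $a\in\Z_p^\times$ and powers of $p$. The $\Z_p^\times$-part preserves the ideal $(\xi_\epsilon)$ up to a unit. This is the standard fact that $\xi_{\epsilon^a}/\xi_\epsilon$ is a unit in $A_{\inf}$. The action of $p^{\Z}$ realizes Frobenius on untilts.

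Surjectivity onto untilts modulo Frobenius uses the known theorem that every untilt $C'$ of an algebraically closed $C^\flat$ is algebraically closed and contains a compatible system of $p$-power roots of unity $\epsilon'$. For such $\epsilon'$, the kernel of $\theta_{C'}$ is generated by $\xi_{\epsilon'}$. Injectivity uses that two systems of roots of unity in the same $C'$ differ by $\Z_p^\times$. Hence identifying $\ker\theta$ up to Frobenius pins down the $\Q_p^\times$-orbit. This gives part (2).

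\textbf{Step 3: $\ell\neq p$.} The stalk is $\Fone[T^{\Z[1/\ell]_+}]$. A local point is a compatible system of $\ell$-power roots $y_n$ of a principal unit $y_0\in 1+\mathfrak m_C$. Since $\ell$ is invertible in $\mathcal O_C$, the map $z\mapsto z^\ell$ is a bijection of $1+\mathfrak m_C$ by Hensel's lemma. So all $y_n$ are determined by $y_0$. Moreover, locality forces the system to be "$\ell$-adically continuous", so the ultrametric Frobenius dynamics contract everything to a single class. Acting by $\Q_\ell^\times$, via $y\mapsto y^{a}$ with $a\in\Z_\ell^\times$ and $\ell^{\Z}$, then gives one orbit.

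\textbf{Main obstacle.} I expect the hardest step to be the bijectivity in part (2). It needs precisely that the $\Q_p^\times$-action on $1+\mathfrak m_{C^\flat}$ matches the equivalence "same untilt up to Frobenius". This requires the Fargues--Fontaine classification of degree-one primitive elements, together with care about the trivial orbit $\epsilon=1$ and about non-root-of-unity elements $\epsilon$. For the latter I would show that $\xi_\epsilon$ is still primitive of degree one, using $|\epsilon-1|<1$.
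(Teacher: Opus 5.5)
\paragraph{Steps 1 and 3 have a genuine gap.} Your Step 2 is sound and follows the paper's route (Lurie, Lecture 8, Proposition 2 and Corollaries 3--4). The gap in Steps 1 and 3 has a common source: you never use the paper's definition of locality. A point at a prime $q$ is local when $\chi:\Z[1/q]_+\to C$ is continuous for the canonical topology $\tau_v$ at some place $v\in\Sigma_{H_q}=\{q,\infty\}$. The place $\infty$ yields only the trivial point: after extending $\chi$ to $\R$, its image is connected in a totally disconnected group. At $v=q$ the condition is $\chi(q^n)=x_0^{q^n}\to 1$.

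You replace this by the guess ``$x_0-1$ is topologically nilpotent''. At $q=p$ the two conditions happen to agree, but that equivalence is exactly what part (1) asks you to prove, and Step 1 only asserts it (``should force''). The missing argument has four parts:
\begin{enumerate}
\item Exclude $|x_0|\neq 1$.
\item Exclude $|x_0|=1$ with residue $\alpha\neq 1$, since $\alpha^{p^n}\neq 1$ in characteristic $p$.
\item For $x_0\in 1+\mathfrak m_C$, prove convergence using $|z^p-1|\le\max(|p|,|z-1|^{p-1})\,|z-1|$ together with $|p|<1$.
\item Use $\chi(ap^n)=\chi(p^n)^a$ and $|z^a-1|\le|z-1|$ to get continuity on all of $\Z[1/p]$, not just along the sequence $p^n$.
\end{enumerate}

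\paragraph{At $\ell\neq p$ the guess is false, and Step 3 fails.} Since $|\ell|=1$, one has $|z^\ell-1|=|z-1|$ on $1+\mathfrak m_C$, so the $\ell$-th power map is an isometry, not a contraction. Hence $x_0^{\ell^n}\to 1$ forces $x_0^{\ell^N}=1$ for some $N$, and the local points are exactly $\Hom(\Z[1/\ell],\mu_{\ell^\infty}(C))$.

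These points are not systems of roots of a principal unit, because a nontrivial $\ell$-th root of unity satisfies $|\zeta_\ell-1|=1$. Nor are the $y_n$ determined by $y_0$: take $y_0=1$, $y_1=\zeta_\ell$. The successive choices of roots of unity make up the whole moduli space. Read literally, your description combined with $\ell$-adic continuity leaves only the trivial point, so the non-trivial orbit disappears. The phrase ``the dynamics contract everything to a single class'' also supplies no argument for transitivity.

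The missing step runs as follows:
\begin{enumerate}
\item Identify $\mu_{\ell^\infty}(C)\cong\Q_\ell/\Z_\ell$, using that $C$ is algebraically closed.
\item Send $\phi$ to the compatible system $z_n=\ell^n\phi(\ell^{-n})\in\Q_\ell/\ell^n\Z_\ell$. This gives $\Hom(\Z[1/\ell],\Q_\ell/\Z_\ell)\cong\varprojlim_n\Q_\ell/\ell^n\Z_\ell=\Q_\ell$.
\item Observe that precomposition by $\Aut(\Q_\ell)=\Q_\ell^\times$ acts by multiplication. It is therefore simply transitive on the non-trivial points $\Q_\ell\setminus\{0\}$.
\end{enumerate}

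In Step 2 you should also verify that precomposition by $a\in\Q_p^\times$ on the completed stalk is $\epsilon\mapsto\epsilon^a$. This uses the continuous extension of $\chi$ to $\Q_p$.
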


This theorem shows that the absolute $\Fone$-geometry furnishes a universal geometric receptacle for perfectoid fields. It accommodates fields of arbitrary characteristic, canonically recovers their tilt through the local arithmetic of the stalk, and produces the closed points of the Fargues–Fontaine curve precisely at the corresponding prime.\vspace{.03in}

In Section~\ref{sect4} we  further illustrate the universality of $\spzf$, by evaluating its  points in the field $\C$ of complex numbers. At a fixed prime $p$, the absolute geometry naturally detects two distinct places,
$\Sigma_p=\{p,\infty\}$,
corresponding respectively to the $p$-adic and archimedean topologies on the stalk of the structure sheaf. 

For $w\in\N^\times$, the power map $z\mapsto z^w$ on the target space $\C$ induces a ``Frobenius-type’’ action of $\N^\times$ on points. 
While this action is canonically defined for integers $w\in\N^\times$, its extension to continuous symmetries immediately encounters the familiar multivalued phenomena inherent in complex exponentiation.
Remarkably, the strict locality condition at a place $v \in \Sigma_p$ canonically resolves this ambiguity. The topology forces the morphisms to factor through the appropriate completions, extending the  action of $\N^\times$ to the  local Weil groups $W_\infty = \C^\times$ and $W_p = \Q_p^\times$ (Proposition~\ref{prop:archimedean_points_C} and Proposition~\ref{prop:nonarchimedean_points_C}): 

\begin{thm}\label{WeilGroupAction}
Let  $\spzf$ be evaluated over $\C$ at the prime $p$. For each place
$v\in\Sigma_p=\{p,\infty\}$,
the locality condition canonically extends the Frobenius action of $\N^\times$ to a continuous and well-defined action of the local Weil group $W_v$. 

Moreover, the space $\mathcal M_p^v$ of non-trivial local points is a principal homogeneous space under $W_v$; equivalently, it consists of a single $W_v$-orbit.
\end{thm}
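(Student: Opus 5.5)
The plan is to make the notion of point concrete and then treat the two places separately. By Proposition~\ref{prop12} the stalk at $p$ is $\mathcal F_p=\Fone[T^{\Z[1/p]_+}]$. A $\C$-point at $p$ is then a multiplicative map $\phi:\Z[1/p]_+\to\C$, $x\mapsto\phi(T^x)$, that is compatible with the spherical structure. Such a map is determined by a family $(z_n)_{n\ge0}$ with $z_n=\phi(T^{p^{-n}})$ and $z_{n+1}^p=z_n$, so that $\phi(T^{a/p^n})=z_n^a$. The $\N^\times$ action by $z\mapsto z^w$ sends $(z_n)$ to $(z_n^w)$. Locality at a place $v\in\Sigma_p$ means that $\phi$ extends continuously to the completion of $\Z[1/p]$ for the corresponding topology, namely $\R$ when $v=\infty$ and $\Q_p$ when $v=p$. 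Equivalently, $T^x$ is regarded as a germ near $x=0$ and must map to a neighbourhood of $1$ compatibly with the local topology. Non-trivial means $\phi\not\equiv 1$ (and not the zero point).

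\emph{Archimedean place.} Locality forces $z_n\to1$ in $\C$. Only this convergent choice of $p$-power roots is allowed, and it removes the multivaluedness. Then $\phi$ extends uniquely to a continuous character $\R\to\C^\times$ of the form $x\mapsto e^{\lambda x}$, where $\lambda$ is uniquely determined: it is the limit of $p^n\log z_n$, computed with the principal branch once $z_n$ is close to $1$. Non-trivial points therefore correspond bijectively to $\lambda\in\C^\times$. The power map $w$ acts by $\lambda\mapsto w\lambda$, and this action visibly extends to multiplication by all of $W_\infty=\C^\times$. The extended action is continuous, and it is free and transitive on $\C^\times$, so $\mathcal M_p^\infty$ is a $\C^\times$-torsor. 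The step to check is that the extended action again produces local points, which is immediate since $e^{\mu\lambda x}$ is a continuous character.

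\emph{$p$-adic place.} Here locality means that $\phi$ is continuous for the $p$-adic topology on $\Z[1/p]$. Since $\C$ has no small subgroups, the image of $p^{-n}$ for large $n$ is forced, so continuity pushes the image into roots of unity of $p$-power order. Concretely, $\phi$ factors through $\Q_p/\Z_p\cong \Z[1/p]/\Z$ up to the trivial part on $\Z$, with values in $\mu_{p^\infty}\subset\C$. I would show that a non-trivial local point is exactly a continuous character $\Q_p\to\mu_{p^\infty}$ trivial on $p^k\Z_p$ for some $k$. I expect that the normalization of the paper forces this to be an injective embedding $\Z[1/p]/\Z\hookrightarrow\mu_{p^\infty}$ via a compatible system of primitive roots $\zeta_{p^n}$, which is the \lq\lq cyclotomic embedding\rq\rq\ of the introduction. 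Such embeddings are parameterized by $\Hom(\Q_p/\Z_p,\mu_{p^\infty})\cong\Q_p$ for general characters, and non-trivial ones form a torsor under $\Q_p^\times=\Aut(\Q_p)$, acting by precomposition $x\mapsto ux$. The integer $w$ acts by raising to the $w$-th power, which is precomposition with multiplication by $w$, and this extends uniquely by continuity to $u\in\Q_p^\times$. Transitivity follows because any two non-trivial continuous characters of $\Q_p$ differ by a unique scalar in $\Q_p^\times$, by Pontryagin self-duality of $\Q_p$.

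The main obstacle is the $p$-adic case. It requires pinning down exactly what locality imposes on the value $\phi(T)$ itself and on the factor coming from $\Z$, and showing that this leaves precisely a $\Q_p^\times$-torsor and not $\Q_p^\times$ times an extra archimedean parameter. I would first try to prove that $p$-adic continuity at $0$ forces $|\phi(T^x)|=1$ and $\phi(T^{p^k})\to1$. This would kill any modulus or $\C^\times$ component and reduce the question to the self-duality statement.
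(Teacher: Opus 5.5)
Your overall route is the paper's (Propositions~\ref{prop:archimedean_points_C} and~\ref{prop:nonarchimedean_points_C}). The archimedean part is correct. Where the paper extends $\rho$ to a continuous character of $\R$ and lifts it through $\exp$, you read off $\lambda$ directly as the eventually constant value of $p^n\operatorname{Log} z_n$; the content is the same. The $p$-adic skeleton also matches the paper: continuity forces values into $\mu_{p^\infty}$, and you then identify local points with characters of $\Q_p$. Your use of Pontryagin self-duality in place of the paper's $\varprojlim \Q_p/p^n\Z_p\cong\Q_p$ is fine. However, the $p$-adic paragraph as written contains errors, and one of them would break the torsor claim if you actually imposed it.

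\begin{itemize}
\item \emph{$\phi$ does not factor through $\Z[1/p]/\Z$.} The value $\phi(1)$ can be any $p$-power root of unity, so $\phi$ only factors through $\Z[1/p]/p^k\Z$, with $k$ depending on $\phi$.
\item \emph{The ``cyclotomic'' normalization must be dropped.} Fix $\psi:\Q_p\to\mu_{p^\infty}$ with kernel $\Z_p$ and write $\phi=\psi(u\,\cdot)$. The kernel of $\phi$ on $\Z[1/p]$ is $p^{-v_p(u)}\Z$. So requiring an injective embedding $\Z[1/p]/\Z\hookrightarrow\mu_{p^\infty}$ singles out exactly $u\in\Z_p^\times$. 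That set is only a $\Z_p^\times$-torsor, and it is not stable under the Frobenius $u\mapsto pu$, so you would not get a $\Q_p^\times$-torsor.
\item \emph{$\Hom(\Q_p/\Z_p,\mu_{p^\infty})\cong\Z_p$, not $\Q_p$.} The correct parameter space is $\Hom(\Z[1/p],\mu_{p^\infty})$. Every such homomorphism is automatically $p$-adically continuous, since it kills $p^k\Z$ once $\phi(1)^{p^k}=1$. This gives $\Q_p$, and its nonzero elements form the $\Q_p^\times$-torsor of non-trivial points.
\end{itemize}

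The step you flag as the main obstacle is immediate, and the paper handles it in two lines. From $z^{p^n}\to1$ you get $|z|^{p^n}\to1$, hence $|z|=1$. Then $z\in\mu_{p^\infty}$ follows either from the expanding property of $\theta\mapsto p\theta$ on $\R/\Z$ near $0$ (the paper's argument) or from no-small-subgroups. If you use no-small-subgroups, apply it to the images of the open subgroups $p^m\Z$ (which tend to $0$ $p$-adically), not to $p^{-n}$, which does not tend to $0$. Finally, continuity alone extends the $\N^\times$-action only to its closure $\Z_p\setminus\{0\}$ in $\Q_p^\times$. Passing to all of $\Q_p^\times$ uses that $p$ acts invertibly, because the stalk is perfect at $p$.
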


This unified torsor structure immediately gives rise to the fundamental geometric quotients. At the archimedean place, quotienting the $W_\infty$-torsor $\mathcal M_p^\infty$ by the discrete arithmetic symmetries generated by Frobenius,
$p^\Z \subset W_\infty$,
produces the complex Tate curve
\begin{equation}\label{tatec}
E_p:=\mathcal M_p^\infty/p^\Z \cong \C^\times/p^\Z
\end{equation} 
which canonically carries the structure of an elliptic curve over $\C$.
Furthermore, the canonical action of the complex conjugation on the archimedean points induces a natural real structure on $E_p$. The periodic orbit $C_p$ of the adelic geometry (\cite{CC3,CC6}) then appears canonically as the quotient
$E_p(\R)/\langle \pm 1\rangle$,
where the  element
$-1\in W_\infty$
geometrically exchanges the two connected components of the real locus.

Alternatively, one may quotient by the continuous symmetries of the completed stalk. At the level of the Weil group, this corresponds to the subgroup fixed by complex conjugation
$\sigma\in \operatorname{Aut}(W_\infty)$,
namely
$W_\infty^\sigma=\R^\times$.
Quotienting the torsor by this subgroup collapses the geometry to the real projective line
\begin{equation}\label{ffr}
\mathcal{X}_\infty:=\mathcal M_p^\infty/W_\infty^\sigma \cong \mathbb{P}^1(\R).
\end{equation} 
This projective space thus arises naturally as an archimedean analogue of the Fargues–Fontaine curve. We let $\tilde{\mathcal{X}}_\infty$ be the canonical unramified double cover of  \eqref{ffr},   with the deck transformation given by  $ -1$.\vspace{.03in}

\begin{thm}[Geometric Decomposition of the  Tate Curve]\label{thm:tate_decomposition}
The complex Tate curve $E_p$ (as in \eqref{tatec}) 
 admits a canonical global decomposition:
\[
E_p \cong C_p \times \tilde{\mathcal{X}}_\infty.
\]
 The canonical nowhere-vanishing holomorphic $1$-form on $E_p$ is given by:
\[
\omega = \frac{d\lambda}{\lambda} + i d\theta
\]
which uniquely intertwines the  $1$-form $d\lambda/\lambda$ of the periodic orbit $C_p$ of length $\log p$ with the $1$-form $d\theta$  of the $p$-independent geometric factor $\tilde{\mathcal{X}}_\infty$ of length $2\pi$.
\end{thm}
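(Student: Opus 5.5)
The plan is to work in polar coordinates on $\C^\times$. Every $z\in\C^\times$ is uniquely $z=\lambda e^{i\theta}$ with $\lambda\in\R_+^\times$ and $\theta\in\R/2\pi\Z$. This gives a canonical isomorphism of Lie groups
\[
\C^\times\cong \R_+^\times\times U(1),
\]
namely the decomposition $W_\infty=\R_+^\times\times U(1)$. Two structural facts drive everything. First, $U(1)=\C^\times/\R_+^\times$. Second, $W_\infty^\sigma=\R^\times=\R_+^\times\times\{\pm1\}$.

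The first step uses the torsor structure of Theorem~\ref{WeilGroupAction}. It identifies $E_p=\mathcal M_p^\infty/p^\Z$ with $\C^\times/p^\Z$, where $p^\Z\subset\R_+^\times$ acts only on the modulus factor. Since $p^\Z$ lies entirely inside the radial factor, the quotient splits as
\[
E_p\cong(\R_+^\times/p^\Z)\times U(1)=C_p\times U(1).
\]
Here I use that $C_p=\R_+^\times/p^\Z$, as recalled after \eqref{tatec}.

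The second step identifies $U(1)$ canonically with $\tilde{\mathcal X}_\infty$.
\begin{itemize}
\item By \eqref{ffr}, $\mathcal X_\infty=\mathcal M_p^\infty/\R^\times\cong\C^\times/\R^\times$.
\item Quotienting first by $\R_+^\times$ gives $\C^\times/\R_+^\times=U(1)$.
\item The residual group $\R^\times/\R_+^\times=\{\pm1\}$ acts on $U(1)$ by $e^{i\theta}\mapsto -e^{i\theta}$. This action is free, so $U(1)\to U(1)/\{\pm1\}\cong\mathbb P^1(\R)$ is an unramified double cover with deck transformation $-1$.
\end{itemize}
Since $\mathbb P^1(\R)$ is a circle, its connected double covers are unique up to isomorphism. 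The cover $U(1)\to\mathbb P^1(\R)$ is the canonical one, so $U(1)=\tilde{\mathcal X}_\infty$.

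The main obstacle is canonicity: the decomposition must not depend on choosing a base point of the torsor. I would handle this by noting two things. The radial projection $\mathcal M_p^\infty\to\mathcal M_p^\infty/U(1)$ is intrinsic. The angular projection $\mathcal M_p^\infty\to\mathcal M_p^\infty/\R_+^\times=\tilde{\mathcal X}_\infty$ is also intrinsic. Both are defined by subgroups of $W_\infty$, and the real structure singles out $\R_+^\times$ as the identity component of $W_\infty^\sigma$. The product of the two projections is equivariant and bijective on the quotient by $p^\Z$. The target $\mathcal M_p^\infty/(U(1)\cdot p^\Z)$ is a torsor under $\R_+^\times/p^\Z$, and I would identify it with $C_p$ using the real locus description $E_p(\R)/\langle\pm1\rangle$.

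Finally, for the form I take the invariant holomorphic $1$-form $dz/z$ on $\C^\times$. It is $p^\Z$-invariant and therefore descends to a nowhere-vanishing holomorphic $1$-form on $E_p$. Writing $z=\lambda e^{i\theta}$ gives
\[
\frac{dz}{z}=\frac{d\lambda}{\lambda}+i\,d\theta .
\]
It is unique up to scalar because holomorphic $1$-forms on a genus one curve form a one-dimensional space. The normalization is fixed by the periods:
\begin{itemize}
\item $\int d\lambda/\lambda=\log p$ over the generator of $C_p$;
\item $\int d\theta=2\pi$ over $\tilde{\mathcal X}_\infty$, which is $p$-independent.
\end{itemize}
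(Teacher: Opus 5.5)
Your proposal is correct and follows essentially the paper's argument: the polar splitting $\C^\times\cong\R_+^\times\times U(1)$ via $z\mapsto|z|$, the observation that $p^\Z\subset\R_+^\times$ acts only on the radial factor so that $E_p\cong(\R_+^\times/p^\Z)\times(\C^\times/\R_+^\times)=C_p\times\tilde{\mathcal X}_\infty$, and the polar-coordinate expansion $dz/z=d\lambda/\lambda+i\,d\theta$. Two of your additions are sound refinements of points the paper leaves implicit: the base-point-free canonicity argument, using the intrinsic subgroups $U(1)$ and $\R_+^\times=(W_\infty^\sigma)^\circ$ and an equivariant map between $W_\infty/p^\Z$-torsors, and the genus-one uniqueness of the holomorphic form up to a scalar, with the scalar fixed by the periods $\log p$ and $2\pi$.
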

In \cite{CC3} we developed the characteristic $1$ geometry of the periodic orbit $C_p$  as an analogue of an elliptic curve. 
In Section \ref{outlook} we point out that the  characteristic $1$ geometry of the scaling site $\mathscr S$ \cite{CC3} is intrinsically encoded in the analytic structure underlying the Fargues–Fontaine curve, and appears as the intrinsic idempotent skeleton underlying $p$-adic Hodge geometry. The uncompleted period ring carries a natural family of Gauss valuations $v_s$, parametrized by $s\in(0,\infty)$ and interpreted as a logarithmic radius. A fundamental property of these valuations is that, for every analytic function $f$, the profile
$s\mapsto v_s(f)$
is concave, piecewise linear, and possesses integral slopes. Passing to the negative valuation canonically produces a convex, piecewise affine function with integral slopes, which is precisely the datum defining a global section of the structure sheaf of the scaling site.


\section{Lift of the Abel-Jacobi Map to $\picone$}\label{abmap}

As established in our recent work \cite{CC6}, the points of the geometric space $\Pic(\spz)$ admit a natural interpretation in terms of arithmetic divisor classes on the arithmetic curve $\Spec\Z$.

An arithmetic divisor determines a non trivial subgroup of $\Q$, while its divisor class modulo principal divisors corresponds to an isomorphism class of torsion-free abelian groups of rank one. Since the points of the topos $\nato$ are classified by isomorphism classes of totally ordered abelian groups isomorphic to subgroups of $\Q$, there is a canonical surjective map
\[
\kappa:\Pt(\nato)\longrightarrow \Pic(\spz),
\]
appearing in diagram~\eqref{top2ad}. This map forgets the ordering, sending a totally ordered group to its underlying group, while the distinguished base point $0$ of the topos is mapped to the $0$ divisor \ie the group  $\Z$.\vspace{.03in}

This interpretation makes it possible to identify the Abel–Jacobi map $\theta$ introduced in \cite{CC6} with the geometric morphism $\Theta$ constructed in \cite{CC1}. More precisely, one obtains the commutative triangle of diagram~\eqref{top2ad}, which expresses the identity
$\kappa\circ\Theta=\theta$
at the level of points.

\subsection{The lift of the Abel-Jacobi map}\label{sect9.1}

We describe the map from the points of the Zariski topos $\spz$ to the points of $\nato$ induced, via Theorem 5.3 of \cite{CC1}, by the geometric morphism
\[
\Theta:\spz\longrightarrow\nato.
\]
We then show that this map lifts the \emph{Abel–Jacobi map}
\[
\theta:\Spec\Z\longrightarrow\Pic(\spz),
\]
thereby providing a topos-theoretic realization of the Abel–Jacobi map.

\begin{proposition}
$(i)$~The geometric morphism  
\[
\Theta: \Spec \Z \longrightarrow \nato
\]
assigns to each  point of $\Spec \Z$ the  isomorphism class of a specific ordered abelian group as follows:
\begin{enumerate}
    \item To the generic point $\eta\in \Spec \Z$, it associates the isomorphism class of the trivial group: $\Theta(\eta) = [\{0\}]$.
    \item To a closed point corresponding to a rational prime $p$, it associates the isomorphism class of the ordered additive group of the ring $\Z[1/p]$ (the rational numbers whose denominators are powers of $p$): $\Theta(p) = [(\Z[1/p], \Z[1/p]_+)]$.
\end{enumerate}
$(ii)$~The maps of points fulfill the equality 
\[
\kappa\circ\Theta=\theta, \qquad   \Pt(\nato)\stackrel{\kappa}{\longrightarrow} \Pic(\spz)
\]
where $\kappa$ associates to a rank one ordered abelian group the divisor class of the underlying group, and to the trivial group $\{0\}$ the zero divisor class.
\end{proposition}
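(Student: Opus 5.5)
The plan is to recall the explicit description of $\Theta$ from \cite[Theorem 5.3]{CC1}. That geometric morphism is induced by a functor on the site of $\nato$, namely the one-object category of $\nto$, with values in sheaves on $\Spec\Z$. Up to equivalence, this is an $\nto$-equivariant sheaf of sets $\mathcal{T}$ on $\spz$. Its sections over an open set $U$ are (essentially) the ordered group of "positive divisors supported in $U$" built from the multiplicative monoid generated by the primes invertible on $U$. Under the inverse image, a point $x$ of $\spz$ is sent to the point of $\nato$ whose associated flat $\nto$-set is the stalk $\mathcal{T}_x$. By the classification in \cite[\S 2.3]{CC1}, a point of $\nato$ corresponds to a flat functor, that is, a filtered colimit of representables. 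Such a flat $\nto$-set is either the terminal point (the base point, with ordered group $\{0\}$) or of the form $H_+$ for a rank one ordered group $H\subset\Q$.

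So the core of the proof is a stalk computation, done in two steps.

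First, take the generic point $\eta$. Stalks at $\eta$ are colimits over all nonempty opens, and these are cofinal among the complements of finite sets of primes. I will check that the relevant sections become trivial in this colimit, because the restriction maps kill everything once all the relevant primes are removed. The stalk is then the one-point $\nto$-set, which is the base point. This gives $\Theta(\eta)=[\{0\}]$.

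Second, take a closed point $p$. The stalk is the colimit over the opens $U\ni p$. Again the $D(n)$ with $p\nmid n$ are cofinal. I will identify this colimit with the filtered colimit of $\N\xrightarrow{p}\N\xrightarrow{p}\cdots$, that is, the $\nto$-set $\Z[1/p]_+$, with $\nto$ acting by multiplication. Equivalently, I will exhibit the corresponding flat functor directly as the colimit diagram of representables indexed by powers of $p$, using \cite[\S 2.3]{CC1}. The ordered group attached to this flat functor is $(\Z[1/p],\Z[1/p]_+)$, which gives item 2.

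Part $(ii)$ is then immediate. Apply $\kappa$ by forgetting the order. The point $p$ goes to the class of the group $\Z[1/p]$, which is $\theta(p)$ as defined in \cite{CC6}. The point $\eta$ goes to the base point $\{0\}$, which $\kappa$ sends to the zero divisor class $\Z=\theta(\eta)$. This proves $\kappa\circ\Theta=\theta$.

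The main obstacle is the stalk computation at $p$. One has to pin down which primes contribute to $\mathcal{T}(U)$ in the construction of \cite{CC1}, so that the localization yields exactly the $p$-divisible group $\Z[1/p]$ and not, say, the $\ell$-divisible groups for $\ell\neq p$. One also has to check that the ordering and the $\nto$-action match the classification of points correctly. I would first write out $\mathcal{T}(D(n))$ explicitly from \cite{CC1}, and then compute the transition maps.
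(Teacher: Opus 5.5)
Your framework is right, and for (ii) it is identical to the paper's. There, the proof of (i) is nothing more than a citation of \cite[Theorem 5.3]{CC1}. Part (ii) is exactly your observation that forgetting the order on $(\Z[1/p],\Z[1/p]_+)$, and sending $\{0\}$ to $\Z$, reproduces the definition of $\theta$ in \cite{CC6}. Describing $\Theta(x)$ as the filtering $\nto$-set given by the stalk at $x$ of $\mathcal{T}=\Theta^*(\nto)$ is also correct.

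The problem is the one step you try to carry out beyond the citation, namely the stalk at $p$. Since $0\in\nto$ is fixed by the whole monoid, it is a global element of the representable, so $\mathcal{T}$ has a global section $0$. Grant your first step, that $\mathcal{T}_\eta$ is a point. Then every section over $U$ agrees with $0$ on a cofinite open, and the sheaf axiom forces
\[
\mathcal{T}(U)\;\cong\;\Bigl\{\xi\in\prod_{q\in U,\ q\neq\eta}\mathcal{T}_q \;\Bigm|\; \xi_q=0 \text{ for all but finitely many } q\Bigr\},
\]
with restriction maps given by projections. Hence the colimit over the neighbourhoods $D(n)$, $p\nmid n$, simply extracts the $p$-coordinate. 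No telescope $\N\xrightarrow{p}\N\xrightarrow{p}\cdots$ arises from the Zariski neighbourhoods, and nothing in this colimit can generate $p$-divisibility.

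The monoid $\Z[1/p]_+$ must therefore already appear as the $p$-coordinate of the sections of the sheaf constructed in \cite{CC1}. Your tentative description via ``the monoid generated by the primes invertible on $U$'' points the wrong way. Shrinking $U$ around $p$ inverts precisely the primes $\ell\neq p$, which is the wrong answer you were worried about.

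So the ``main obstacle'' you identify is not overcome by computing transition maps. It is settled only by reading the stalks $\{0\}$ and $\Z[1/p]_+$ directly off the construction in \cite[Theorem 5.3]{CC1}. At that point your argument reduces to the paper's one-line proof. The presentation of $\Z[1/p]_+$ as a colimit of representables along multiplication by $p$ is a true fact about that point of $\nato$, but it is not what the stalk computation produces.
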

\begin{proof} $(i)$~Follows from \cite[Theorem 5.3]{CC1}.\newline
$(ii)$~Follows from the definition of the Abel-Jacobi map given in \cite{CC6}. \end{proof}

\subsection{Pullback of the absolute structure sheaf}

We now consider the geometric pullback of the structure sheaf $\mathcal{O}_{\Fone}:=\Fone[T]$ of the $\Fone$-arithmetic site 
\[
\picone := (\nato, \Fone[T])
\]
along the geometric morphism $\Theta$. 
The stalk of $\mathcal{O}_{\Fone}$ at a point of  $\nato$ classified by an ordered group $(H, H_+)$ is $\Fone[T^{H_+}]$. \vspace{.03in}

Pulling back the absolute sheaf $\mathcal{O}_{\Fone}$ to classical arithmetic spaces yields a remarkable sheaf of monoids that exhibits ``partial perfection'' with respect to the universal Frobenius endomorphisms.

\begin{proposition}\label{prop12}
Let \[
\mathcal{F} := \Theta^{-1}(\mathcal{O}_{\Fone})
\]
be the pullback of the absolute structure sheaf to $\Spec \Z$. The stalks of $\mathcal{F}$ are described as follows:
\begin{enumerate}
    \item At the generic point $\eta\in \spz$: $\mathcal{F}_\eta = \Fone$.
    \item At a closed point of $\spz$ corresponding to a  prime $p$: $\mathcal{F}_p = \Fone[T^{\Z[1/p]_+}]$.
\end{enumerate}
\end{proposition}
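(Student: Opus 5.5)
The argument uses the general fact that inverse image along a geometric morphism commutes with taking stalks. For a geometric morphism $\Theta:\mathcal E\to\mathcal E'$ and a point $x:\mathbf{Sets}\to\mathcal E$, the composite $\Theta\circ x$ is a point of $\mathcal E'$. For any sheaf (or sheaf of pointed monoids) $\mathcal G$ on $\mathcal E'$ we then have
\[
(\Theta^{-1}\mathcal G)_x = x^{*}\Theta^{*}\mathcal G=(\Theta\circ x)^{*}\mathcal G=\mathcal G_{\Theta(x)} .
\]
This holds because stalks are inverse images along points, and inverse images compose. The functor $\Theta^*$ preserves finite limits and colimits, so it sends the internal $\Fone$-algebra (pointed commutative monoid) structure of $\cO_{\Fone}$ to one on $\mathcal F$. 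The identification of stalks is therefore an identification of $\Fone$-algebras, not merely of sets.

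So I would compute $\mathcal F_x=(\cO_{\Fone})_{\Theta(x)}$ point by point, using the preceding Proposition~$(i)$ to identify $\Theta(x)$. By that proposition, a closed point $p$ is sent to the point of $\nato$ classified by $(\Z[1/p],\Z[1/p]_+)$. The stalk of $\cO_{\Fone}=\Fone[T]$ at a point classified by $(H,H_+)$ is $\Fone[T^{H_+}]$, as recorded just before the statement. Hence $\mathcal F_p=\Fone[T^{\Z[1/p]_+}]$.

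It is worth checking this stalk description concretely. The points of $\nato$ correspond to flat functors on $\nto$, realized as the colimit of the $\nto$-set along the diagram defining $H_+$. For $H=\Z[1/p]$, the positive part is the filtered colimit of $\N\xrightarrow{\times p}\N\xrightarrow{\times p}\cdots$. Applying this colimit to $\Fone[T]$, with transition maps $\operatorname{Frob}_p$, gives exactly $\Fone[T^{\Z[1/p]_+}]$, since filtered colimits of monoid algebras are monoid algebras of the colimit monoid.

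At the generic point, $\Theta(\eta)$ is the base point $\{0\}$, whose positive cone is $\{0\}$. The stalk is then $\Fone[T^{\{0\}}]=\Fone[T^0]=\Fone$, the pointed monoid $\{0,1\}$. Equivalently, the base point of $\nato$ is the point where the action of $0\in\nto$ collapses everything: the flat functor sends $\operatorname{Frob}_0$, $T^n\mapsto T^0=1$, to the identity. Consequently every $T^n$ with $n>0$ is identified with $1$, and only $0$ and $1$ survive.

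The main obstacle is this generic point. I need to verify carefully, using the description of points of $\nato$ in \cite[\S 2.3]{CC1}, that the stalk functor at the adjoined initial/terminal point is indeed evaluation that kills the positive part, rather than, say, the fixed points or something degenerate. I would handle it by writing that point as the flat functor on $\nto$ given by the one-element $\nto$-set $\{*\}$ with trivial action. The stalk of an $\nto$-set $X$ is then $X\otimes_{\nto}\{*\}$, the quotient of $X$ by $x\sim k\cdot x$ for all $k$. For $X=\Fone[T]$, taking $k=0$ identifies every $T^n$ with $T^0=1$, which yields $\{0,1\}=\Fone$.
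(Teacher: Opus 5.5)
Your argument is correct, and it is more explicit than the paper's own proof. The paper only cites \cite{Xu} for the computation over $\nat$ and remarks that the extension to $\nato$ is straightforward.

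You reduce the statement to two standard facts. The first is $(\Theta^{*}\mathcal G)_x=\mathcal G_{\Theta(x)}$, because inverse images compose. The second is that the stalk of an $\nto$-set $X$ at the point given by a flat $\nto$-set $P$ is $X\otimes_{\nto}P$. You then feed in the preceding Proposition~(i), which identifies $\Theta(p)$ and $\Theta(\eta)$.

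At the closed points, you write $\Z[1/p]_+$ as the filtered colimit of the representable $\nto$ along multiplication by $p$. This turns the stalk at $p$ into the colimit of $\Fone[T]$ along $\operatorname{Frob}_p$, which checks the formula $\Fone[T^{H_+}]$ that the paper only quotes.

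At the base point $\{*\}$, the relation $x\sim 0\cdot x$ identifies every $T^n$ with $1$. The element $0$ stays in a separate class, since both $\{0\}$ and its complement are stable under the action. This is exactly the ``straightforward'' extension to $\nato$ that the paper leaves to the reader.

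One point deserves to be made explicit. $\Fone[M]$ is the spherical algebra $k_+\mapsto M\wedge k_+$, which is obtained from $M$ by finite colimits. It therefore commutes with the stalk functor, so working at the level of pointed monoids, as you do, loses nothing.

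Your route gives a self-contained derivation whose only external input is the description of $\Theta$ on points. The paper's citation is shorter but leaves both the stalk formula and the generic point unverified in the text.
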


\begin{proof}
We refer to \cite{Xu} for details of the proof. The extension from the topos $\nat$ to $\nato$ is straightforward.
\end{proof}

Algebraically, the stalk $\mathcal{F}_p$ is the  localization of the spherical monoid  algebra $\Fone[T]:=\Fone[T^{\Z_+}]$ with respect to the $p$-th power Frobenius map $\Psi_p(x) = x^p$. In $\mathcal{F}_p$, the equation $x^{p^k} = y$  has always a unique solution for any $k \ge 0$, meaning that the monoid of the spherical algebra $\Fone[T^{\Z[1/p]_+}]$ is \emph{perfect at $p$}, while it remains imperfect at any other prime different from $p$.

\subsection{Morphisms from stalks to $\Fone$-algebras}

Before evaluating the points of the $\Fone$-curve $\spzf:=(\spz,\mathcal F)$ in specific $\Fone$-algebras, it is necessary to describe the algebraic structure of the morphisms from its local stalks. At the prime $p$, the structure sheaf of $\spzf$ is governed by the $\Fone$-algebra $\Fone[T^{H_p^+}]$, where $H_p^+ := \Z[1/p]_+$. Let $A$ be an arbitrary  $\Fone$-algebra, and let $A(1_+)$ denote its underlying multiplicative monoid. By the base-change adjunction for $\Fone$-algebras (\cf \cite[Proposition 2.2]{CC4}), an $\Fone$-algebra morphism $\rho: \Fone[T^{H_p^+}] \to A$ is uniquely determined by its restriction to the generating monoid. This establishes a canonical bijection between $\Fone$-algebra morphisms and multiplicative monoid homomorphisms:
\[
\chi : (\Z[1/p]_+, +) \longrightarrow A(1_+), \quad \text{with} \quad \chi(0)=1.
\]
Because the target monoid $A(1_+)$ is commutative, the set of such homomorphisms naturally inherits an algebraic structure of its own.

\begin{proposition}\label{prop:monoid_structure_morphisms}
Let $A$ be a commutative $\Fone$-algebra. The set of $\Fone$-algebra morphisms \sloppy $\operatorname{Hom}_{\Fone}(\Fone[T^{H_p^+}], A)$ is canonically endowed with the structure of a commutative monoid with absorbing element, where the operation is given by pointwise multiplication:
\[
(\chi_1 \cdot \chi_2)(a) := \chi_1(a) \cdot \chi_2(a) \quad \text{for all } a \in \Z[1/p]_+.
\]
Within this structure:
\begin{enumerate}
    \item The neutral element $\mathbf{1}$ is the constant map $1$, defined by $\mathbf{1}(a) = 1$ for all $a \in \Z[1/p]_+$.
    \item The absorbing element $\mathbf{0}$ is the ``constant map $0$'', defined by $\mathbf{0}(0) = 1$ and $\mathbf{0}(a) = 0$ for all $a > 0$.
\end{enumerate}
\end{proposition}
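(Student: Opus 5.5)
We use the bijection recalled just before the statement, which comes from the base-change adjunction of \cite[Proposition 2.2]{CC4}. It identifies $\operatorname{Hom}_{\Fone}(\Fone[T^{H_p^+}],A)$ with the set $M$ of monoid homomorphisms $\chi:(\Z[1/p]_+,+)\to A(1_+)$ with $\chi(0)=1$. The plan is to put all the structure on $M$ and transport it across this bijection. Nothing about the $\Fone$-algebra structure beyond the multiplicative monoid $A(1_+)$ is needed. The one input about $A$ is that $A(1_+)$ is a commutative pointed monoid with unit $1$ and absorbing element $0$; this holds because $\Fone$ is the spherical algebra of $\{0,1\}$, and commutativity of $A$ makes $A(1_+)$ commutative.

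First, $M$ is closed under pointwise multiplication. For $\chi_1,\chi_2\in M$ we have $(\chi_1\chi_2)(0)=1\cdot 1=1$. Using commutativity of $A(1_+)$,
\[
(\chi_1\chi_2)(a+b)=\chi_1(a)\chi_1(b)\chi_2(a)\chi_2(b)=(\chi_1\chi_2)(a)\,(\chi_1\chi_2)(b).
\]
Commutativity is exactly what allows the rearrangement of the middle factors, and this is the only point where the hypothesis on $A$ enters. Associativity and commutativity of the pointwise product are inherited value by value from $A(1_+)$. The constant map $\mathbf 1$ is clearly a homomorphism with $\mathbf 1(0)=1$, and it is neutral because $1$ is neutral in $A(1_+)$.

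Second, we check that $\mathbf 0$, defined by $\mathbf 0(0)=1$ and $\mathbf 0(a)=0$ for $a>0$, is a homomorphism; this is the step needing a small case split. Take $a,b\in\Z[1/p]_+$. If $a=b=0$, both sides equal $1$. If exactly one of $a,b$ is $0$, say $a=0$, then both sides equal $\mathbf 0(b)$. If $a,b>0$, then $a+b>0$ because the ordering is total and positive elements are closed under addition, so both sides equal $0\cdot 0=0$.

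Finally, $\mathbf 0$ is absorbing. For any $\chi\in M$, $(\chi\cdot\mathbf 0)(0)=\chi(0)=1$, and for $a>0$, $(\chi\cdot\mathbf 0)(a)=\chi(a)\cdot 0=0$ since $0$ is absorbing in $A(1_+)$. Hence $\chi\cdot\mathbf 0=\mathbf 0$.

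The structure on $\operatorname{Hom}_{\Fone}$ is canonical because it is defined through the canonical adjunction bijection. I expect no real obstacle in this argument.
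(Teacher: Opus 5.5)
Your proposal is correct and follows the paper's proof essentially step for step: closure of the hom-set under pointwise product using commutativity of $A(1_+)$, the neutral element $\mathbf 1$, a case check that $\mathbf 0$ is a homomorphism, and absorption via $\chi(a)\cdot 0=0$ for $a>0$. Your explicit treatment of the case $a=b=0$, and your remark that $0$ is absorbing in $A(1_+)$, only spell out details the paper leaves implicit.
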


\begin{proof}
Let $\chi_1, \chi_2 : \Z[1/p]_+ \to A(1_+)$ be two monoid homomorphisms. Because $A(1_+)$ is commutative, their pointwise product $\chi = \chi_1 \cdot \chi_2$ is again a monoid homomorphism:
\[
\chi(a+b) = \chi_1(a+b)\chi_2(a+b) = \chi_1(a)\chi_1(b)\chi_2(a)\chi_2(b) = \chi(a)\chi(b),
\]
and $\chi(0) = \chi_1(0)\chi_2(0) = 1 \cdot 1 = 1$. This operation is clearly associative and commutative. 

The map $\mathbf{1}(a) = 1$ trivially satisfies $\mathbf{1}(a+b) = \mathbf{1}(a)\mathbf{1}(b)$ and acts as the neutral element under pointwise multiplication. 

For the absorbing element $\mathbf{0}$, we verify it is a valid homomorphism: if $a, b > 0$, then $a+b > 0$, and $\mathbf{0}(a+b) = 0 = 0 \cdot 0 = \mathbf{0}(a)\mathbf{0}(b)$. If $a > 0$ and $b = 0$, then $\mathbf{0}(a+0) = \mathbf{0}(a) = 0 = 0 \cdot 1 = \mathbf{0}(a)\mathbf{0}(0)$. Thus $\mathbf{0}$ is a valid morphism. For any homomorphism $\chi$, the product $(\chi \cdot \mathbf{0})(a)$ yields $1 \cdot 1 = 1$ at $a=0$, and $\chi(a) \cdot 0 = 0$ for $a > 0$, proving that $\mathbf{0}$ is indeed the absorbing element of the Hom-set.
\end{proof}


\section{$\Spec\Z$ and the Fargues-Fontaine Curve}

In this section we show that the local geometry of the $\Fone$-curve $\spzf=(\Spec(\Z), \mathcal{F})$  naturally detects the (closed points of the) Fargues--Fontaine curve \cite{FF}. Starting from a point of  $\nato$, encoded by an ordered rank-one abelian group $H$, we first determine the set of places $v$ of $\Q$ at which $H$ admits a canonical topology, and we use this result to define the notion of a \emph{morphism} from $H$ to be \emph{local} at  $v$. This locality condition is then shown to be equivalent to the existence of a unique extension through the completion $\Q_v$.

We then localize by considering the stalk of $\mathcal{O}_{\Fone}=\Fone[T]$ at (the point of $\nato$ corresponding to) $H_p=\Z[1/p]$ and evaluate its $\Fone$-points in an algebraically closed perfectoid field $F$ of characteristic $p$. We prove that all such points are parameterized by $F$, whereas the locality condition cuts out exactly the open unit disk $1+\mathfrak m_F$. Using J. Lurie's description of untilts via cyclotomic embeddings \cite{Lurie}, we identify these local $\Fone$-points with the untilts of $F$ endowed by an embedding of $\Q_p^{\mathrm{cyc}}$.

In the last part of the section, we analyze the natural action of the automorphism group $\Aut(\Q_p)\cong \Q_p^\times$ of the completed stalk. The quotient by $\Z_p^\times$ forgets the choice of cyclotomic coordinate and yields the isomorphism classes of untilts of $F$, while the further quotient by $p^\Z$ produces the Frobenius orbits, hence the closed points of the Fargues--Fontaine curve. In this way, the curve $\spzf$ arises canonically as the moduli space of local $\Fone$-points modulo the intrinsic symmetries of the arithmetic site $\nato$. 

 Finally, in \S\S \ref{ffcurve} and \ref{heuristic} we show that the selection of the pullback sheaf $\mathcal F$ on $\spz$ provides a geometric realization of P. Scholze's heuristic idea.

\subsection{Dense embeddings in local fields}\label{locality}

We begin by analyzing the local fields obtained as completions of a point of the $\Fone$-arithmetic site
$
\picone=(\nato,\Fone[T])$, 
represented by a totally ordered group $H$ isomorphic to a subgroup of $\Q$. This analysis naturally identifies the places of $\Q$ that are intrinsically encoded by $H$.

Let $H\neq \{0\}$. Up to isomorphism, 
we may assume that 
\[
\Z\subseteq H\subseteq\Q.
\]
Unless $H$ is isomorphic to $\Z$, the group $H$ is dense in 
$\R$ under the standard embedding.\vspace{.03in}

In order to formulate a general theory of \emph{local morphisms} from the stalks of $\picone$ to perfectoid fields \cite{Scholze}, we begin by characterizing the local fields that can occur as completions of dense embeddings of $H$.\vspace{.03in}

We say that a prime $p$ \emph{divides} $H$ if $H$ is 
$p$-divisible.

\begin{lemma}\label{lembed}
Assume that $H\neq{0}$ is not isomorphic to $\Z$, and let $K$ be a local field \footnote{A locally compact, non-discrete topological field.}. Then $H$ admits a dense embedding into $K$ if and only if $K$ is topologically isomorphic to $\R$ or to $\Q_p$ for a prime $p$ dividing $H$.
\end{lemma}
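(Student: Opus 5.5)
We first have to fix what ``dense embedding'' means. We take it to be an injective group homomorphism $j:H\to (K,+)$ whose image is dense. Since $\Z\subseteq H$, such a $j$ is determined on $\Z$ by $u:=j(1)\neq 0$. Moreover, for $h=a/b\in H$ we have $b\,j(h)=a u$. In characteristic zero this gives $j(h)=hu$, so $j(H)=uH\subseteq \Q u$ and in particular $K\supseteq \Q$. We may rescale by $u^{-1}$, which is a topological automorphism of $(K,+)$ and does not affect density. So we may assume $j$ is the inclusion $H\subseteq\Q\subseteq K$, and the question becomes whether $H$ is dense in $K$.

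We then dispose of positive characteristic. If $\operatorname{char}K=\ell>0$, the image of $\Z$ is killed by $\ell$, so $j(1)\cdot \ell=0$ forces $u=0$. This contradicts injectivity. Hence $K$ has characteristic $0$, and by the classification of local fields it is $\R$, $\C$, or a finite extension of some $\Q_q$. The closure of $H$ lies in the closure of $\Q$ inside $K$, and this closure is a copy of $\R$ or of $\Q_q$ (the completion of $\Q$ for the restricted absolute value). For $j(H)$ to be dense, this closure must be all of $K$. This rules out $\C$ and proper extensions of $\Q_q$, so $K\cong\R$ or $K\cong\Q_q$ topologically.

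It remains to settle density in the two cases. For $\R$: a subgroup of $\R$ is either cyclic or dense. Since $H$ is not isomorphic to $\Z$ (and subgroups of $\Q$ containing $\Z$ that are cyclic are isomorphic to $\Z$), $H$ is dense. For $\Q_q$: the closure $\overline H$ is a closed subgroup containing $\Z$, hence containing $\Z_q$. Closed subgroups of $\Q_q$ containing $\Z_q$ are either $q^{-n}\Z_q$ or all of $\Q_q$. If $H$ is $q$-divisible, then $q^{-n}\in H$ for all $n$, so $\overline H=\Q_q$. Conversely, if $H$ is not $q$-divisible, there is an $h\in H$ with $h/q\notin H$. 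We want to show the $q$-adic valuations of elements of $H$ are bounded below, which gives $\overline H\subseteq q^{-n}\Z_q$, not dense.

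The main obstacle is exactly this converse, since "not $q$-divisible" means that $1/q^n\in H$ fails for some $n$. Here we argue as follows. If $x\in H$ has $v_q(x)=-m$, write $x=a/(q^m b)$ with $q\nmid ab$. Then $\gcd(a, q^m b)$-type Bézout combinations with $1\in H$ show that $1/q^m\in H$: choose integers $r,s$ with $r a + s q^m b=1$, so that $r b\,x + s\cdot 1 = 1/q^m$. Hence unbounded valuations would put every $q^{-m}$ in $H$, making $H$ $q$-divisible. This is presumably the intended meaning of ``divides''; since $H$ has rank one, divisibility of some nonzero element is equivalent to divisibility of the whole group. The converse direction of the lemma then follows from the explicit embeddings $H\subseteq\R$ and $H\subseteq\Q_p$ for $p$ dividing $H$.
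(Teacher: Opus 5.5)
Your proof is correct and follows essentially the same route as the paper's: you exclude positive characteristic, rescale to the canonical inclusion $H\subseteq\Q\subseteq K$, identify $K$ with the closure of $\Q$ via Ostrowski and the classification of local fields, and detect $q$-adic density through boundedness of $q$-power denominators, where your B\'ezout step supplies the justification that the paper's Step~4 leaves implicit. Two small slips need fixing: in characteristic $\ell$ it is $j(\ell)=\ell u=0$ with $\ell\neq 0$ in $H$, not $u=0$, that contradicts injectivity (the paper avoids injectivity there, since $H/\ell H$ has order at most $\ell$ and the image would be finite), and the B\'ezout identity should read $ra+sq^m=1$ (needing only $q\nmid a$), which gives $rbx+s=1/q^m$.
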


\begin{proof}
We first prove necessity ($\Rightarrow$). 
Assume there exists a homomorphism 
$\phi: H \to K$ with dense image.

\emph{Step 1.}
Suppose $\operatorname{char}(K) = \ell > 0$, 
\ie $\ell\cdot 1_K = 0$. 
For any $x \in H$, we have
$\ell\phi(x)=\phi(\ell x)=0$, 
so $\phi(\ell H)=0$. 
Because $\Z\subsetneq H\subseteq\Q$, 
the quotient group $H/\ell H$ is finite 
(it is isomorphic to either $0$ or $\Z/\ell\Z$). 
Consequently $\phi(H)$ is finite. 
Since $\phi(H)$ is dense in $K$, 
the field $K$ would then be finite and discrete, 
contradicting the hypothesis that $K$ is a 
local field (non-discrete). 
Thus $\operatorname{char}(K)=0$.

\emph{Step 2.}
Since $\operatorname{char}(K)=0$, 
the field $K$ is a topological vector space over $\Q$. 
If $\phi(1)=0$, then $\phi(\Z)=0$. 
Because $H/\Z$ is torsion while $K$ is torsion-free, 
this would imply $\phi(H)=0$, contradicting density. 
Thus $c:=\phi(1)\neq0$. 

Replacing $\phi$ by the scaled homomorphism 
$x\mapsto c^{-1}\phi(x)$ (which preserves density), 
we may assume without loss of generality that 
$\phi(1)=1_K$. 
Using additivity, for any integer $n\in\Z$,
\[
\phi(n)=n\cdot 1_K.
\]
For any rational $a/b\in H$, we have
\[
b\,\phi(a/b)=\phi(a)=a\cdot 1_K.
\]
Since division is uniquely defined in $K$, 
it follows that 
\[
\phi(a/b)=(a/b)\cdot 1_K.
\]
Thus $\phi$ coincides with the restriction 
of the canonical embedding $\Q\hookrightarrow K$.

\emph{Step 3.}
Because $\phi(H)$ is dense in $K$, 
the subfield $\Q$ is dense in $K$. 
Since $K$ is a locally compact completion of $\Q$, 
Ostrowski's Theorem 
(\cite[Theorem~5, Chapter~I, \S3]{Weil}) 
implies that $K$ is isomorphic to either $\R$ 
(with the archimedean absolute value) 
or to $\Q_p$ (with the $p$-adic absolute value).

\emph{Step 4.}
Suppose $K\cong\Q_p$. 
We determine for which primes $p$ 
the subgroup $H$ is dense. 
If $p$ does not divide $H$, then $H$ is not 
$p$-divisible. 
Thus there exists a maximal exponent 
$n\ge0$ such that no element of $H$ 
contains $p$ in its denominator 
to a power exceeding $n$. 
Consequently every element of $H$ 
is of the form
\[
p^{-n}\frac{a}{b},
\quad b\ \text{coprime to }p,
\]
and therefore
\[
H\subseteq p^{-n}\Z_p.
\]
Since $p^{-n}\Z_p$ is a closed proper subgroup 
of $\Q_p$, this contradicts the density of $H$. 
Thus $p$ must divide $H$.\vspace{.02in}

We now prove sufficiency ($\Leftarrow$). 
If $K\cong\R$, the canonical embedding of $H$ 
is dense because $H\neq\Z$. 
If $K\cong\Q_p$ and $p$ divides $H$, 
then $H$ contains $\Z[1/p]$. 
Since $\Z[1/p]$ is dense in $\Q_p$, 
the canonical embedding of $H$ into $\Q_p$ 
has dense image.
\end{proof}

\subsection{Places associated to points of $\nato$}

By applying Lemma~\ref{lembed}, one may attach to each subgroup $H\subseteq \Q$ a canonical set of places of $\Q$ that serve as local geometric directions along which the stalks of $\picone$ can be evaluated.

\begin{definition}\label{setplaces}
Let $H$ be the ordered group associated with a point of $\nato$. We define the set $\Sigma_H$ of \emph{places of $H$} as the collection of places $v$ of $\Q$ for which $H$ admits a dense embedding into the corresponding local field $\Q_v$, with the convention that $\Q_\infty=\R$.
\end{definition}

The following result provides a complete description 
of the places $v\in\Sigma_H$.

\begin{proposition}\label{classpla}
Let $H$ be the ordered group associated to a point of $\nato$. 
\begin{enumerate}
    \item[(i)] Any local field containing $H$ as a dense additive subgroup is canonically isomorphic to $\Q_v$ for some place $v\in\Sigma_H$.
    \item[(ii)] One has $\Sigma_H=\emptyset$ if and only if $H=\{0\}$ or 
    $H\cong\Z$. Otherwise, the archimedean place 
    $\infty$ always belongs to $\Sigma_H$.
    \item[(iii)] The non-archimedean places
    $
    \Sigma_H\setminus\{\infty\}
    $
    coincide with the set of primes $p$ dividing $H$
    (\ie such that $H$ is $p$-divisible).
    \item[(iv)] The subset
    $
    S:=\Sigma_H\setminus\{\infty\}
    $
    is the largest set of primes for which $H$ contains a subgroup isomorphic to
    $
    \Z_S=\Z[S^{-1}]$.
\end{enumerate}
\end{proposition}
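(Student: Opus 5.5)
The plan is to derive all four items from Lemma~\ref{lembed} and Definition~\ref{setplaces}, treating the degenerate cases $H=\{0\}$ and $H\cong\Z$ separately.

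\emph{Item (i).} Let $K$ be a local field containing $H$ as a dense additive subgroup. First I would observe that $H$ is neither $\{0\}$ nor isomorphic to $\Z$. Indeed, $\{0\}$ is not dense in a non-discrete field. A cyclic subgroup $\Z c$ with $c\neq 0$ cannot be dense in $K$ either. In characteristic $\ell>0$ it is finite. In characteristic $0$, $K$ is $\R$, $\C$, or a finite extension of $\Q_p$. For $\R$ and $\C$, the subgroup $\Z c$ is discrete. For a $p$-adic field, $\Z c$ lies in the compact set $c\,\cO_K$.

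Hence Lemma~\ref{lembed} applies. Its proof shows that after rescaling, the embedding is the restriction of $\Q\hookrightarrow K$ and that $\Q$ is dense in $K$. Therefore $K$ is the completion of $\Q$ at a place $v$. The isomorphism $K\cong\Q_v$ is canonical because it is the unique continuous extension of the identity on the dense subfield $\Q$. By Definition~\ref{setplaces}, $v\in\Sigma_H$.

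\emph{Item (ii).} If $H=\{0\}$ or $H\cong\Z$, the argument just given shows that no dense embedding into any $\Q_v$ exists, so $\Sigma_H=\emptyset$. Otherwise, we may take $\Z\subsetneq H\subseteq\Q$, and the sufficiency part of Lemma~\ref{lembed} gives that $H$ is dense in $\R$. Hence $\infty\in\Sigma_H$, and in particular $\Sigma_H\neq\emptyset$.

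\emph{Item (iii).} When $H$ is not trivial or cyclic, Lemma~\ref{lembed} states directly that $\Q_p$ receives a dense embedding of $H$ if and only if $p$ divides $H$. In the degenerate cases both sides are empty: $\Sigma_H=\emptyset$, and no prime divides $\{0\}$ or $\Z$ in the intended sense. One point needs care here. The trivial group is formally $p$-divisible, so I would interpret ``divides'' for nonzero $H$, or else note the exclusion explicitly.

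\emph{Item (iv).} Normalize $H$ so that $\Z\subseteq H\subseteq\Q$ (the case $H=\{0\}$ is excluded by the same convention as in (iii)).
\begin{itemize}
\item If $H$ is $p$-divisible for every $p\in S$, then repeated division of $1$ gives $1/p^k\in H$ for all $k$. Since $H$ is a group, it contains $\Z[S^{-1}]$.
\item Conversely, suppose $H$ contains a subgroup isomorphic to $\Z_{S'}$ for a set of primes $S'$. This subgroup is $\Q$-scaled, namely $c\,\Z[S'^{-1}]$, because every rank-one subgroup of $\Q$ is determined up to scaling. For each $p\in S'$ it then contains $c/p^k$ for all $k$.
\end{itemize}
The main obstacle is this last implication: divisibility of the single element $c$ by all powers of $p$ must be upgraded to $p$-divisibility of all of $H$. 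The plan is to use the type classification of rank-one torsion-free groups. The $p$-height of every nonzero element of $H$ equals that of $c$, up to a finite shift, so it is infinite. Therefore every element of $H$ is $p$-divisible within $H$, and $p$ divides $H$. This shows $S'\subseteq S$, so $S$ is the largest such set.
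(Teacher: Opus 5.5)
Your proposal is correct and follows essentially the paper's route: (i)--(iii) come from Lemma~\ref{lembed}, and your $p$-height (type) argument in (iv) is the same rank-one classification the paper uses through the supernatural normal form $H=H_N$, where $p$-divisibility means $n_p=\infty$. Your extra care fills two small points the paper passes over silently: ruling out cyclic $H$ before invoking the lemma in (i), and noting that $\{0\}$ is formally $p$-divisible, so (iii)--(iv) must exclude it.
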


\begin{proof}
$(i)$ This follows immediately from 
Lemma~\ref{lembed}.

$(ii)$ One has  $\Sigma_H=\emptyset$ if $H=\{0\}$ or 
    $H\cong\Z$. Conversely, since $H$ is isomorphic to a subgroup 
of $\Q$, it embeds into $\R$. 
Since $H$ has rank one, any two embeddings into 
$\R$ differ by multiplication by a non-zero scalar. 
If the image of such an embedding is not dense, 
it must be discrete, hence a lattice in $\R$, 
which implies $H \cong \Z$. 
Thus, if $H\neq\{0\}$, $H \not\cong \Z$, 
its image in $\R$ is dense, 
and therefore $\infty \in \Sigma_H$.

$(iii)$ This follows directly from the 
$p$-adic density criterion established 
in Lemma~\ref{lembed}.

$(iv)$ Let 
$S = \Sigma_H \setminus \{\infty\}$. 
Up to isomorphism, we may assume 
$H = H_N$ for a supernatural number 
$N = \prod p^{n_p}$ 
(where $n_p \in \Z_{\ge 0} \cup \{\infty\}$), 
defined by
\[
H_N 
= \{ x \in \Q 
\mid v_p(x) \ge -n_p 
\text{ for all primes } p \}.
\]

By $(iii)$, $H$ is $p$-divisible 
if and only if $p \in S$. 
In terms of the supernatural valuation, 
$p$-divisibility is equivalent to 
$n_p = \infty$. 
Thus $n_p=\infty$ for all $p \in S$, 
which implies that 
$\Z_S \subseteq H_N$. 

Conversely, if $H$ contains a subgroup 
isomorphic to $\Z_S$, then 
$n_p=\infty$ for all $p \in S$, 
so $H$ is $p$-divisible for each 
$p \in S$, and therefore 
$S \subseteq \Sigma_H \setminus \{\infty\}$.
\end{proof}

We may therefore describe explicitly the places of any 
non-trivial subgroup $H\subset\Q$:
\[
\Sigma_H
=
\{\infty\}
\cup
\{\,p \text{ prime}\mid H \text{ is } p\text{-divisible}\,\}.
\]
This classification naturally equips the group $H$
with a canonical family of local geometric structures.

\begin{corollary}
For each place $v\in\Sigma_H$, let
\[
i_v:H\hookrightarrow\Q_v
\]
denote the canonical embedding obtained from the inclusion
$
H\subset\Q\subset\Q_v$. 
Then any other dense embedding
\[
j_v:H\hookrightarrow\Q_v
\]
differs from $i_v$ by multiplication by a non-zero scalar. More precisely, there exists
$
\lambda\in\Q_v^\times
$
such that
\[
j_v(x)=\lambda\, i_v(x)
\qquad
\forall x\in H.
\]
In particular, the uniform structure and topology $\tau_v$ induced on $H$ by $j_v$ coincide with those induced by the canonical embedding $i_v$.
\end{corollary}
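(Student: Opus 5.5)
The plan is to rerun the rigidity argument from Step~2 of the proof of Lemma~\ref{lembed}, this time without normalizing $\phi(1)$ to $1$. Let $j_v:H\hookrightarrow\Q_v$ be a dense embedding. Then $H\neq\{0\}$ and $H\not\cong\Z$: a dense subgroup of $\Q_v$ cannot be trivial or cyclic, since $\Q_v$ is non-discrete and $\Z\cdot x$ is discrete in $\R$ and compact in $\Q_p$. Up to the fixed isomorphism, I assume as in \S\ref{locality} that $\Z\subseteq H\subseteq\Q$, so $1\in H$. I set $\lambda:=j_v(1)\in\Q_v$.

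First I show $\lambda\neq 0$. If $\lambda=0$, then $j_v(\Z)=0$. Since $H/\Z$ is torsion and $\Q_v$ is torsion-free, this forces $j_v=0$, which contradicts injectivity (or density). So $\lambda\in\Q_v^\times$.

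Next I identify $j_v$ on all of $H$. For $a/b\in H$ with $a,b\in\Z$ and $b\neq0$, additivity gives $b\,j_v(a/b)=j_v(a)=a\lambda$. Because $\Q_v$ is a field of characteristic zero, I can divide by $b$ and get $j_v(a/b)=(a/b)\lambda=\lambda\, i_v(a/b)$. This proves $j_v=\lambda\, i_v$ on all of $H$.

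For the final assertion, I use that multiplication by $\lambda\in\Q_v^\times$ is a homeomorphism of $\Q_v$ and an isomorphism of additive topological groups. It therefore carries the uniform structure of $\Q_v$ to itself. Hence the structure pulled back to $H$ via $j_v=\lambda\circ i_v$ coincides with the one pulled back via $i_v$, so $\tau_v$ is independent of the embedding. I also note that $i_v$ is itself dense by Lemma~\ref{lembed} and Proposition~\ref{classpla}(iii), so the comparison is between two dense embeddings.

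There is no real obstacle here. The only point needing care is that the argument must not use density beyond ruling out $\lambda=0$, and must not assume that any isomorphism of $H$ with a subgroup of $\Q$ has been fixed in a special way. Changing that identification only rescales by a rational number, which is absorbed into $\lambda$.
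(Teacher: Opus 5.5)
Your proof is correct and follows essentially the same route as the paper. Both set $\lambda=j_v(1)$, use $\Q$-linearity (clearing denominators as in Step~2 of Lemma~\ref{lembed}) to get $j_v=\lambda\, i_v$, and conclude from the fact that multiplication by $\lambda\in\Q_v^\times$ is an automorphism of the additive topological group $\Q_v$ (the paper calls it a bi-Lipschitz homeomorphism); your explicit check that $\lambda\neq 0$ is a detail the paper leaves implicit, and injectivity of $j_v$ alone already gives it.
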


\begin{proof}
Because $H$ is a subgroup of $\Q$, 
any additive homomorphism 
$j_v: H \to \Q_v$ 
is uniquely determined by its value 
on any non-zero element. 
Let $\lambda = j_v(1)$ 
(or equivalently $\lambda = j_v(a)/a$ 
for any non-zero $a \in H$). 
By $\Q$-linearity 
(as established in the proof of 
Lemma~\ref{lembed}), 
\[
j_v(x) 
= \lambda x 
= \lambda\, i_v(x).
\]

Since multiplication by 
$\lambda \in \Q_v^\times$ 
is a bi-Lipschitz homeomorphism 
of the topological field $\Q_v$, 
the pullback of the uniform structure 
along $j_v$ coincides with that 
along $i_v$.
\end{proof}

The preceding corollary shows that for each place
$v\in\Sigma_H$, the abstract group $H$ carries a canonical topology
$\tau_v$, making it into a well-defined topological group independent of any auxiliary choices. This provides an intrinsic notion of continuity at the place $v$ of $\Q$, and therefore allows one to formulate precisely what it means for a morphism from the related stalk $\Fone[T^{H_+}]$ of $\cO_{\Fone}$ to a topological group to be \emph{continuous, or local}, at $v$.

\subsection{Local morphisms}

Having determined the intrinsic places $\Sigma_H$ of $H$, we turn to the corresponding notion of locality for morphisms from the stalk $\Fone[T^{H_+}]$ of $\cO_{\Fone}$. We prove that this locality condition is precisely characterized by extension to the associated local completion.\vspace{.03in}

Let $G$ be a complete Hausdorff topological group, 
and let $H$ be the subgroup associated to a point 
of the arithmetic site $\widehat{\N^\times_0}$. 
Consider a group homomorphism
\[
\rho:H\to G.
\]

\begin{definition}\label{defnloc}
Let $v\in\Sigma_H$ be a place of $H$. 
The homomorphism $\rho:H\to G$ 
is said to be \emph{local at $v$} 
if it is continuous with respect to the canonical topology $\tau_v$ on $H$.
\end{definition}

As we now show, this continuity condition is equivalent to a rigid extension property: namely, the homomorphism $\rho$ extends uniquely to the local completion $\Q_v$.

\begin{proposition}
A homomorphism $\rho: H \to G$ is local at $v$ 
if and only if there exists a continuous homomorphism 
$\tilde{\rho}: \Q_v \to G$ such that the diagram
\[
\begin{tikzcd}
H \arrow[r, "\rho"] \arrow[d, "i_v"', hook] & G \\
\Q_v \arrow[ru, "\tilde{\rho}"'] &
\end{tikzcd}
\]
commutes, where $i_v$ denotes the canonical dense embedding. 
Moreover, since $H$ is dense in $\Q_v$, such an extension 
$\tilde{\rho}$ is unique whenever it exists.
\end{proposition}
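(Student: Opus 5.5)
The plan is to prove the two implications separately, and then uniqueness. The easy direction ($\Leftarrow$) comes first. Suppose a continuous homomorphism $\tilde\rho:\Q_v\to G$ with $\tilde\rho\circ i_v=\rho$ exists. By the Corollary following Proposition~\ref{classpla}, the topology $\tau_v$ on $H$ is, by definition, the one induced by $i_v$. Hence $\rho=\tilde\rho\circ i_v$ is a composite of continuous maps and is continuous for $\tau_v$, i.e.\ local at $v$.

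For the substantive direction ($\Rightarrow$), assume $\rho$ is continuous for $\tau_v$. I would use the standard extension theorem for uniformly continuous maps into complete spaces.

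\emph{Step 1: uniform continuity.} A continuous homomorphism of topological groups is automatically uniformly continuous for the (two-sided; here abelian on the source) uniform structures. Given a neighbourhood $W$ of $e$ in $G$, continuity at $0$ provides a neighbourhood $U$ of $0$ in $H$ with $\rho(U)\subseteq W$. Then $x-y\in U$ implies $\rho(x)\rho(y)^{-1}\in W$, and likewise $\rho(y)^{-1}\rho(x)\in W$. So $\rho$ is uniformly continuous for both the left and right uniformities of $G$.

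\emph{Step 2: completion.} The uniform structure on $H$ from $\tau_v$ is the one induced by the metric of $\Q_v$, and $H$ is dense in $\Q_v$ because $v\in\Sigma_H$. Therefore $\Q_v$ is the completion of $(H,\tau_v)$. Since $G$ is complete Hausdorff, $\rho$ extends uniquely to a uniformly continuous map $\tilde\rho:\Q_v\to G$. Concretely, $\tilde\rho(x)=\lim\rho(x_n)$ for any sequence $x_n\to x$ in $H$; the images form a Cauchy net by Step 1, and the limit is unique by the Hausdorff property.

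\emph{Step 3: homomorphism property.} The maps $(x,y)\mapsto\tilde\rho(x+y)$ and $(x,y)\mapsto\tilde\rho(x)\tilde\rho(y)$ are continuous on $\Q_v\times\Q_v$, using continuity of multiplication in $G$. They agree on the dense subset $H\times H$, and $G$ is Hausdorff, so they agree everywhere.

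Uniqueness follows from the same density argument: two continuous maps into a Hausdorff space that agree on the dense set $i_v(H)$ coincide.

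The main obstacle I expect is the completeness notion in Step 2 when $G$ is non-abelian. There, "complete" should mean complete for the two-sided (Raikov) uniformity, and I must check that images of Cauchy sequences are Cauchy in that uniformity. Step 1 handles this, because both left and right differences are controlled. Everything else is routine density and Hausdorff reasoning. In the intended applications $G$ is abelian (multiplicative groups of fields), so the subtlety disappears.
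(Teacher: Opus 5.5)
Your proposal is correct and follows the paper's proof essentially step for step. The paper also uses composition for the easy direction, uniform continuity of a continuous homomorphism plus the extension property of the completion $\Q_v \supset i_v(H)$ for the converse, and density together with continuity of the group operations for the homomorphism property and uniqueness. Your explicit check that both $\rho(x)\rho(y)^{-1}$ and $\rho(y)^{-1}\rho(x)$ are controlled, so that the two-sided uniformity on a possibly non-abelian $G$ is handled, is a small refinement the paper leaves implicit in the phrase ``canonical uniform structures''.
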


\begin{proof}
If the extension $\tilde{\rho}$ exists, then 
$\rho = \tilde{\rho} \circ i_v$. 
Since both $i_v$ (which induces the topology $\tau_v$) 
and $\tilde{\rho}$ are continuous, their composition 
$\rho$ is continuous with respect to $\tau_v$.

Conversely, assume that $\rho$ is continuous with respect 
to $\tau_v$. Because $\rho$ is a group homomorphism between 
topological groups, continuity at the identity implies 
uniform continuity with respect to the canonical uniform 
structures. The target group $G$ is complete and Hausdorff 
by hypothesis, and the domain $H$ (equipped with $\tau_v$) 
is a dense uniform subspace of the complete metric space 
$\Q_v$. By the universal property of completions of uniform 
spaces, the uniformly continuous map $\rho$ extends uniquely 
to a continuous map 
$\tilde{\rho}: \Q_v \to G$. 

Finally, since group operations are continuous, 
the extension $\tilde{\rho}$ is necessarily a group 
homomorphism.
\end{proof}

The notion of locality for a homomorphism 
$\rho: L \to G$ as introduced in 
Definition~\ref{defnloc} provides the correct 
geometric structure needed to evaluate the stalks 
of the $\Fone$-arithmetic site $\picone$ at complete topological fields 
$K$ (for instance, a perfectoid field of characteristic $p$).

Recall that a morphism of $\Fone$-algebras 
$\Fone[T^{L_+}]\to HK$ corresponds to a multiplicative 
monoid homomorphism 
$\phi: L_+ \to K^\times$ (\cite[Proposition 2.2]{CC4}). 
Extending $\phi$ to the Grothendieck group $L:=G(L_+)$ 
yields a group homomorphism 
$\rho:L \to K^\times$.\vspace{.03in}

Stating that the geometric point $\phi$ is 
\emph{local at $p$} means precisely that the associated 
homomorphism $\rho$ is continuous with respect 
to $\tau_p$, and therefore extends uniquely 
to the $p$-adic completion $\Q_p$.\vspace{.05in}

Later in this section we show that this locality condition provides the exact dynamical mechanism forcing the image of the stalk of the structure sheaf of $\Fone$-arithmetic site at a given point to land inside the set of closed points of the Fargues--Fontaine curve.

\subsection{Points of $\spzf$ over a field and the tilt}

Before imposing the analytic locality conditions required to evaluate the $\Fone$-curve \sloppy $\spzf=(\spz,\mathcal F)$ in a perfectoid field, it is worth  analyzing the points of that curve evaluated in a general field $F$ endowed with the coarse topology. This purely algebraic perspective reveals that the fundamental operation of $p$-adic Hodge theory---tilting---is intrinsically encoded into the absolute stalks of $\spzf$.\vspace{.03in}

Let $F$ be an arbitrary field. We consider the algebraic points of $\spzf$ at the prime $p$ with values in $F$. By the base-change adjunction (\cite[Proposition 2.2]{CC4}), a $\Fone$-algebra morphism 
$\rho: \mathcal F_p=\Fone[T^{H_p^+}] \to HF$ corresponds uniquely to a 
multiplicative monoid homomorphism 
\[
\chi : (\Z[1/p]_+, +) \longrightarrow (F, \cdot), \ \ \chi(0)=1.
\]
To understand the moduli space of such morphisms, we evaluate $\chi$ on the \emph{canonical generators of the stalk}. For each integer $n \ge 0$, let:
\[
x_n := \chi\left(\frac{1}{p^n}\right) \in F.
\]
Because $\chi$ is a monoid homomorphism, it must respect the additive structure of the stalk. The relation $\frac{1}{p^{n-1}} = p \cdot \frac{1}{p^n}$ in $\Z[1/p]_+$ translates multiplicatively in $F$ to the condition:
\[
x_{n-1} = \chi\left(\frac{1}{p^{n-1}}\right) = \chi\left(\frac{p}{p^n}\right) = \left( \chi\left(\frac{1}{p^n}\right) \right)^p = x_n^p.
\]
Consequently, the homomorphism $\chi$ is \emph{uniquely determined} by the sequence $(x_0, x_1, x_2, \dots)$ of elements in $F$ satisfying the recursive relation $x_n^p = x_{n-1}$. Such sequences are precisely the elements of the inverse limit of $F$ under the $p$-th power map. Conversely, given such a sequence one extends it multiplicatively defining:
\[
\chi(a/p^k)=x_k^a
\quad\text{for all } 
a\in\Z_{\ge0}
\] 
and one obtains in this way a  monoid homomorphism $\chi : (\Z[1/p]_+, +) \longrightarrow (F, \cdot)$.
 In the foundational language of perfectoid geometry, the inverse limit 
\[
F^\flat := \varprojlim_{ x \mapsto x^p} F
\]
defines the underlying multiplicative monoid of the tilt of $F$.
We have thus established the following canonical bijection:

\begin{proposition}\label{propflat}
Let $F$ be an arbitrary field endowed with the coarse topology. The map $\chi\mapsto  \chi^\flat$  from points of the $\Fone$-curve $\spzf$ at the prime $p$ with values in $F$ to the tilt $F^\flat$: 
\begin{equation}\label{flatmap}
(\chi^\flat)_n:=\chi\left(\frac{1}{p^n}\right)
\end{equation}
is a canonical  bijection $\beta$ with the tilt $F^\flat$
\begin{equation}\label{flatmap}
\beta: \Pt_p(\spzf,F)\to F^\flat.
\end{equation}
\end{proposition}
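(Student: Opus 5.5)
The plan is to reduce, via the base-change adjunction \cite[Proposition 2.2]{CC4}, to monoid homomorphisms $\chi:(\Z[1/p]_+,+)\to(F,\cdot)$ with $\chi(0)=1$, and then to build mutually inverse maps between this set and $F^\flat=\varprojlim_{x\mapsto x^p}F$. Here $F^\flat$ is read as the set of sequences $(x_n)_{n\ge0}$ in $F$ with $x_n^p=x_{n-1}$. Since $F$ carries the coarse topology, every such $\chi$ is local, so $\Pt_p(\spzf,F)$ is exactly this set of homomorphisms and no continuity condition has to be checked.

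\emph{Well-definedness of $\beta$.} Given $\chi$, put $x_n=\chi(1/p^n)$. From $1/p^{n-1}=p\cdot(1/p^n)$ in $\Z[1/p]_+$ and the multiplicativity of $\chi$ we get $x_n^p=x_{n-1}$. Hence $\chi^\flat=(x_n)$ lies in $F^\flat$.

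\emph{Injectivity.} Every element of $\Z[1/p]_+$ can be written as $a/p^k$ with $a\in\Z_{\ge0}$ and $k\ge0$, and it equals the sum of $a$ copies of $1/p^k$. Therefore $\chi(a/p^k)=x_k^a$, with $x_k^0=1=\chi(0)$ covering the case $a=0$. So $\chi$ is determined by the sequence $(x_n)$.

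\emph{Surjectivity.} Start from $(x_n)\in F^\flat$ and define $\chi(a/p^k):=x_k^a$. This is the step that needs care.
\begin{itemize}
\item \emph{Independence of the representative.} Any two representatives of the same element are related by $a/p^k=ap^j/p^{k+j}$. Iterating the relation $x_{m}^p=x_{m-1}$ gives $x_{k+j}^{p^j}=x_k$, hence $x_{k+j}^{ap^j}=x_k^a$, so the two definitions agree.
\item \emph{Additivity.} Write both summands over a common denominator $p^k$. Then $\chi(a/p^k+b/p^k)=x_k^{a+b}=x_k^a x_k^b$.
\item \emph{Recovery of the sequence.} By construction $\chi(0)=1$ and $\chi(1/p^n)=x_n$, so $\beta(\chi)=(x_n)$.
\end{itemize}

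Combining injectivity and surjectivity, $\beta$ is a bijection. It is canonical because no choices are made beyond the fixed generators $1/p^n$ of the stalk.

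The only point I expect to need attention is the independence of the representative in the surjectivity step. It is handled by the iterated $p$-th power relation $x_{k+j}^{p^j}=x_k$.
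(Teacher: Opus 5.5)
Your proposal is correct and follows essentially the same route as the paper. You reduce via the base-change adjunction to monoid homomorphisms $\chi:(\Z[1/p]_+,+)\to(F,\cdot)$, note that $x_n=\chi(1/p^n)$ satisfies $x_n^p=x_{n-1}$ and determines $\chi$, and rebuild $\chi$ from a compatible sequence via $\chi(a/p^k)=x_k^a$; your explicit checks of independence of the representative and of additivity fill in details the paper leaves implicit.
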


This proposition establishes that at the purely algebraic level, the $\Fone$-stalk at $p$ of $\mathcal F$ acts as a universal tilting functor; evaluating it over any field $F$ automatically extracts its tilt $F^\flat$. 

It is very interesting to see how this moduli space collapses when we endow the field $F$ with the \emph{discrete} topology. In this case, the continuity condition imposed on the points of the $\Fone$-curve acts as a rigid arithmetic filter. In the natural topology of the stalk, the sequence $p^n$ converges additively to $0$ as $n \to \infty$. For the homomorphism $\chi$ to be continuous, its multiplicative image must converge to $\chi(0) = 1$:
\[
\lim_{n \to \infty} \chi(p^n) = 1 \in F.
\]
Because $F$ is endowed with the discrete topology, a convergent sequence must be eventually constant. Thus, there exists an integer $N$ such that for all $n \ge N$, $\chi(p^n) = 1$. 

Since $\chi(p^n) = \chi(1)^{p^n} = x_0^{p^n}$, this implies that $x_0^{p^n} = 1$, meaning $x_0:=\chi(1)$ is a $p$-power root of unity. By the same logic, $\chi(1/p^k)^{p^{n+k}} = \chi(p^n) = 1$, forcing every $x_k$ to be a $p$-power root of unity. Consequently, one sees that the image of $\chi$ lands entirely within $\mu_{p^\infty}(F)$, the group of roots of unity in $F$ of $p$-power order. 

Because the target $\mu_{p^\infty}(F)$ is a group, the monoid homomorphism $\chi : \Z[1/p]_+ \to \mu_{p^\infty}(F)$ automatically and uniquely extends to a group homomorphism from the Grothendieck group $H_p = \Z[1/p]$. We thus obtain the following precise classification:

\begin{proposition}
Let $F$ be a field endowed with the discrete topology. The  points of the $\Fone$-curve $\spzf$ over the prime $p$ with values in $F$ correspond exactly to group homomorphisms:
\[
 \Pt_p(\spzf,F)\cong\Hom_{\Grp}(H_p,\mu_{p^\infty}(F)).
\]
\end{proposition}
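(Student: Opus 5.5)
The plan is to work entirely through the bijection of Proposition~\ref{propflat}: a point of $\spzf$ at $p$ with values in $F$ is a monoid homomorphism $\chi:(\Z[1/p]_+,+)\to(F,\cdot)$ with $\chi(0)=1$, and here we additionally require continuity when $\Z[1/p]_+$ carries the topology $\tau_p$ and $F$ is discrete. I will build two maps, continuous $\chi \mapsto$ group homomorphism $H_p\to\mu_{p^\infty}(F)$ and back, and check they are mutually inverse.

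\emph{Forward direction.} Take a continuous $\chi$. The argument given just before the statement applies: $p^n\to 0$ in $\tau_p$, and $F$ is discrete, so $\chi(p^n)=1$ for $n\ge N$. Then for every $k$ we have $\chi(1/p^k)^{p^{N+k}}=\chi(p^N)=1$, and for $a\ge 0$ also $\chi(a/p^k)=\chi(1/p^k)^a$. Hence every value of $\chi$ lies in $\mu_{p^\infty}(F)$. This set is an abelian group, so $\chi$ extends uniquely to the Grothendieck group $H_p=\Z[1/p]$ by $\rho(a-b)=\chi(a)\chi(b)^{-1}$. The usual check shows $\rho$ is well defined and a homomorphism.

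\emph{Backward direction.} Take a homomorphism $\rho:H_p\to\mu_{p^\infty}(F)$ and set $\chi=\rho|_{\Z[1/p]_+}$; this is clearly a monoid homomorphism with $\chi(0)=1$. The main point is continuity for $\tau_p$. Because the target is discrete, it suffices to find an open neighbourhood of $0$ in $H_p$ on which $\rho$ is identically $1$. Set $\zeta=\rho(1)$; it has some order $p^m$. Then $\rho(p^m a)=\zeta^{a p^m}=1$ for every $a\in\Z$, so $\rho$ is trivial on $p^m\Z$. To pass to the $p$-adic neighbourhood $H_p\cap p^m\Z_p$, observe that $\Z[1/p]\cap p^m\Z_p=p^m\Z$, since an element of $\Z[1/p]$ with non-negative $p$-adic valuation is an integer. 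Therefore $\rho$ vanishes on the open subgroup $p^m\Z_p\cap H_p$ and is continuous. Continuity of a homomorphism at $0$ implies continuity everywhere, and the same holds for the restriction to the positive cone.

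These two constructions are inverse to each other. Restricting an extension gives back $\chi$, and extending a restriction gives back $\rho$ because $H_p$ is generated by $\Z[1/p]_+$ as a group. I expect the only delicate step to be the identity $\Z[1/p]\cap p^m\Z_p=p^m\Z$, which is where the topology $\tau_p$ really enters. I will also state explicitly that the continuity condition imposed in the statement is Definition~\ref{defnloc}, applied to the Grothendieck-group extension or, equivalently, to $\chi$ on the positive cone.
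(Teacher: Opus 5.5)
Your proposal is correct and follows the paper's argument: continuity at $0$ together with $p^n\to 0$ forces $\chi(p^N)=1$, so every value of $\chi$ lies in $\mu_{p^\infty}(F)$, and $\chi$ then extends uniquely to the Grothendieck group $H_p=\Z[1/p]$. Your backward direction fills in the surjectivity check that the paper leaves implicit: every homomorphism $H_p\to\mu_{p^\infty}(F)$ is trivial on the open subgroup $H_p\cap p^m\Z_p=p^m\Z$, hence is continuous for $\tau_p$.
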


This reveals a  topological trichotomy governing the $\Fone$-stalk at $p$:
\begin{enumerate}
    \item \emph{Coarse Topology:} The moduli space is the full tilt $F^\flat$.
    \item \emph{Discrete Topology:}  The moduli space isolates the $p$-power roots of unity $\mu_{p^\infty}(F)$, rigidly collapsing the continuous degrees of freedom of the stalk into discrete torsion.\vspace{.03in}

    In the following part we shall see that:
    
    \item \emph{Analytic Topology:} When $F$ is a perfectoid field endowed with its analytic valuation topology, the continuous local points rigidly carve out the moduli space of untilts---thereby generating the closed points of the Fargues--Fontaine curve.
    \end{enumerate}

    \subsection{Points of $\spzf$ in a perfectoid field}

    Let $C$ be a perfectoid field of residue characteristic $p$, satisfying the standard axioms (\cf \cite[Lecture 2, Def.~1]{Lurie}). That is, $C$  is complete with respect to a nonarchimedean absolute value $|\cdot|_C$, its residue field has characteristic $p$, meaning $|p|_C < 1$, and  the Frobenius is surjective modulo $p$, and the value group is non-discrete. \vspace{.03in}

We first describe the  analytic moduli space $\Pt_p(\spzf,C)$ at the prime $p$.   
 We know, from Proposition \ref{classpla} that given the ordered group $H_p=\Z[1/p]$, the set of places $\Sigma_{H_p}$ includes the archimedean place $\infty$ (corresponding to the completion $\R$) alongside the non-archimedean place $p$. A local morphism must be continuous with respect to one of the associated  topologies. 
However, $C$ is a perfectoid field equipped with a non-archimedean absolute value, thus its underlying topological space is totally disconnected. Because the additive group $\R$ is connected, the continuous image of $\R$ in $C^\times$ must be connected. The only connected subsets of a totally disconnected space are singletons. Since the map $\R\to C^\times$ is a group homomorphism, this singleton must be the identity $1 \in C^\times$. Thus, there are no non-trivial continuous homomorphisms from $\R$ to $C^\times$. Consequently,  we may restrict our attention  to study the continuity for the $p$-adic topology.
 The natural symmetries of the moduli space $\Pt_p(\spzf,C)$ are governed by the group $\Q_p^\times \simeq p^\Z \times \Z_p^\times$ acting by automorphisms of the $p$-adic completion $\Q_p$ of $H_p$. 

 Given a perfectoid field $C$ we let 
\begin{equation}\label{perfideal}
\mathfrak{m}_{C}:=\{x\in C\mid \vert x\vert<1\}
\end{equation}
be the unique maximal ideal of the valuation ring  $\mathcal{O}_C:=\{x\in C\mid \vert x\vert \leq 1\}$.

The multiplicative monoid $C^\flat$ becomes a perfectoid field of characteristic $p$ when endowed with the addition
\begin{equation}\label{perfadd}
(x+y)_n:=\lim _{m \rightarrow \infty}\left(x_{n+m}+y_{n+m}\right)^{p^m}
\end{equation}
and the norm given by 
\begin{equation}\label{perfnorm}
\vert x\vert_{C^\flat}:=\vert x_0\vert_{C}.
\end{equation}

\begin{theorem}\label{main}
Let $C$ be a perfectoid field of residue characteristic $p$.
\begin{enumerate}
\item[(i)]~The  points of the $\Fone$-curve $\spzf$ at the prime $p$ with values in $C$ correspond precisely to group homomorphisms:
\[
 \Pt_p(\spzf,C)\simeq\Hom_{\Grp}(H_p,1 + \mathfrak{m}_{C})
\]
\item[(ii)]~The evaluation map $\beta: \Pt_p(\spzf,C)\to C^\flat$ $\beta(\chi)=\chi^\flat$ induces a canonical bijection  $$\beta:\Pt_p(\spzf,C)  \stackrel{\sim}{\to} D := 1 + \mathfrak{m}_{C^\flat}$$ 
\item[(iii)]~The bijection $\beta$ in $(ii)$ is $\Q_p^\times$-equivariant.
\end{enumerate}
\end{theorem}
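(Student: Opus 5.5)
Here $\Pt_p(\spzf,C)$ denotes the points at $p$ that are local at $p$ in the sense of Definition~\ref{defnloc}, i.e.\ monoid maps $\chi:\Z[1/p]_+\to C$ whose associated homomorphism is $\tau_p$-continuous. The plan is to combine Proposition~\ref{propflat} with the $p$-adic continuity condition.

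\emph{Step 1: local points avoid $0$ and form a group.} Let $\chi$ be local at $p$. Since $p^n\to 0$ in $\tau_p$, continuity forces $\chi(p^n)=x_0^{p^n}\to 1$. Hence $x_0\neq 0$, so every $x_k$, being a $p^k$-th root of $x_0$, is a unit. Thus $\chi$ takes values in $C^\times$ and extends uniquely to a group homomorphism $\rho:H_p\to C^\times$.

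\emph{Step 2: proof of (i).} The key estimate is the $p$-adic behaviour of $p$-power maps on $C^\times$. If $|x_0|\neq 1$, then $|x_0^{p^n}|$ tends to $0$ or $\infty$, which is impossible, so $|x_0|=1$ and therefore $|x_k|=1$ for all $k$. Next, $x_0^{p^n}\to 1$ forces some $x_0^{p^n}\in 1+\mathfrak m_C$. Then the residue $\bar x_0\in k_C^\times$ satisfies $\bar x_0^{p^n}=1$ in characteristic $p$, so $\bar x_0=1$, i.e.\ $x_0\in 1+\mathfrak m_C$. The same argument applied to $x_k$ gives $x_k\in 1+\mathfrak m_C$ for all $k$. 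Hence $\rho$ lands in $1+\mathfrak m_C$.

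Conversely, let $\rho:H_p\to 1+\mathfrak m_C$ be a group homomorphism. For $y=1+z$ with $|z|<1$, the binomial expansion gives $|y^{p}-1|\le\max(|p||z|,|z|^p)<|z|$. Iterating, $|y^{p^n}-1|\to 0$. Together with multiplicativity, this shows $\rho$ is continuous at $0$ for $\tau_p$, since a $\tau_p$-neighbourhood of $0$ is $p^N\Z_p\cap H_p$. Continuity at $0$ of a homomorphism gives continuity everywhere, so $\rho$ is local at $p$. This proves (i).

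\emph{Step 3: proof of (ii).} By Proposition~\ref{propflat}, $\beta$ is injective into $C^\flat$. Under the description in (i), the image consists exactly of the compatible sequences with all $x_k\in 1+\mathfrak m_C$. By \eqref{perfnorm}, $|x-1|_{C^\flat}=|(x-1)_0|_C$. Using \eqref{perfadd}, one computes $(x-1)_0=\lim_m (x_m-1)^{p^m}$, which should have absolute value below $1$ exactly when $x_0\in 1+\mathfrak m_C$. Since $x_0\in 1+\mathfrak m_C$ already forces every $x_k\in 1+\mathfrak m_C$ by the residue-field argument of Step 2, the image of $\beta$ is precisely $D=1+\mathfrak m_{C^\flat}$.

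I expect the main obstacle to be the identification $|x-1|_{C^\flat}<1\iff x_0\in 1+\mathfrak m_C$. My first approach is to reduce modulo $p$ via the standard isomorphism $\mathcal O_{C^\flat}\cong\varprojlim_{\Phi}\mathcal O_C/p$. Under this isomorphism $x-1$ corresponds to $(x_m-1 \bmod p)_m$, and its norm being $<1$ means the residue of the image in $k_C$ vanishes, i.e.\ $\bar x_0=1$.

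\emph{Step 4: proof of (iii).} $\Q_p^\times$ acts on local points by $(\lambda\cdot\rho)=\tilde\rho\circ(\lambda\cdot)$, using the unique continuous extension $\tilde\rho:\Q_p\to 1+\mathfrak m_C$. On $D$ it acts via the $\Z_p$-module structure of $1+\mathfrak m_{C^\flat}$, $x\mapsto x^{\lambda}$ for $\lambda\in\Z_p$, extended to $\Q_p^\times$ using that $p$-th roots are unique in $C^\flat$. Equivariance will be checked on the generators $p$ and $\Z_p^\times$. For $p$ it is the shift $(x_n)\mapsto(x_n^p)$. For $u\in\Z_p^\times$, approximate $u$ by integers and pass to the limit, using continuity of $\tilde\rho$ and of exponentiation on $1+\mathfrak m_{C^\flat}$.
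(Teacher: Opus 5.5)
Your proposal is correct and follows essentially the same route as the paper. For (i) you use the same case analysis on $|x_0|$, a residue-field argument and the binomial contraction; for (ii), injectivity from Proposition~\ref{propflat} plus a norm identification; for (iii), equivariance checked on $p$ and on $\Z_p^\times$. The only real divergence is in (ii): you get $|x-1|_{C^\flat}<1\iff x_0\in 1+\mathfrak m_C$ from the ring isomorphism $\mathcal O_{C^\flat}\cong\varprojlim_\Phi\mathcal O_C/p$, which settles both inclusions at once, whereas the paper proves only one direction via $\lim_n (x_n-1)^{p^n}$ and dispatches surjectivity in a single sentence.
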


\begin{proof} 
$(i)$~A  monoid homomorphism $\chi : \Z[1/p]_+ \to C$ is uniquely determined by the sequence $x_n = \chi(1/p^n) \in C$, which satisfies $x_n^p = x_{n-1}$. For a homomorphism $\chi : \Z[1/p]_+ \to C$ to be continuous, the topological convergence $p^n \to 0$ in the stalk must translate to the multiplicative convergence $\chi(p^n) \to 1$ in $C$. Letting $x_0 = \chi(1)$, this requires:
\[
\lim_{n \to \infty} x_0^{p^n} = 1 \in C.
\]
This analytic condition imposes a strong restriction on the choice of $x_0$. We can classify the behavior of $x_0^{p^n}$ by the  norm:
\begin{itemize}
    \item If $|x_0|_p < 1$, then $x_0^{p^n} \to 0$.
    \item If $|x_0|_p > 1$, then $|x_0^{p^n}|_p \to \infty$.
    \item If $|x_0|_p = 1$ but $x_0 \notin 1 + \mathfrak{m}_{C}$ (where $\mathfrak{m}_{C}$ is the maximal ideal \eqref{perfideal}), its reduction $\alpha$ in the residue field $
k=\mathcal{O}_C / \mathfrak{m}_C
$ is not $1$. Because the Frobenius $a \mapsto a^p$ is an automorphism of $k$, the elements $\alpha^{p^n}\in k$ are all  $\neq 1$, meaning $|x_0^{p^n} - 1|= 1, \forall n$.
    \item If $x_0 \in 1 + \mathfrak{m}_{C}$ (\ie $c=|x_0 - 1| < 1$), the convergence $x_0^{p^n}\to 1$ follows since the iterates of the $p$-th power map converge uniformly to $1$ in the closed disk $\{z\mid \vert z-1\vert\leq c$. Writing $z = 1+y$ with $|y| < c$, the binomial expansion yields $$|z^p - 1| \le \max(|py|, |y^p|)\leq \max(\vert p\vert ,c^{p-1} )\vert z-1\vert. $$ 
    Iterating this contraction guarantees $x_0^{p^n} \to 1$. Note that because C is a perfectoid field of residue characteristic p, we have |p| < 1. 
\end{itemize}
Therefore, the continuity condition $\chi(p^n) \to 1$ is satisfied if and only if $x_0$ belongs to the group of principal units $1 + \mathfrak{m}_{C}$. Because the sequence defining the point satisfies $x_n^p = x_{n-1}$, every $x_n$ must also lie in $1 + \mathfrak{m}_{C}$. It then follows that the homomorphism $\chi : \Z[1/p]_+ \to C$ is continuous for the induced $p$-adic topology on $H_p=\Z[1/p]\subset \Q_p$. Indeed, for any integer $a>0$ relatively prime to $p$ one has 
$$
\chi(ap^n)=\chi(p^n)^a,
$$
and the binomial formula shows that 
$$
\vert z-1\vert <1\implies \vert z^a-1\vert \leq \vert z-1\vert.
$$

$(ii)$~To verify that $x^\flat$ lies in the open unit disk $1 + \mathfrak{m}_{C^\flat}$, we must show that $|x^\flat - 1^\flat|_{C^\flat} < 1$. 

In the tilt $C^\flat$, addition is defined via the limit \eqref{perfadd}. Let $y^\flat = x^\flat - 1^\flat$; its $0$-th component in $C$ is given by the convergent sequence\footnote{This still holds for $p=2$, and in that case one has $2=0$ in $C^\flat$ and $\lim _{n \rightarrow \infty}\left(x_n-1\right)^{2^n}=\lim _{n \rightarrow \infty}\left(x_n+1\right)^{2^n}$ in $C$}:
\[
y_0 = \lim_{n \to \infty} (x_n - 1)^{p^n}.
\]
Because $x_n \in 1 + \mathfrak{m}_C$, we have $|x_n - 1|_C < 1$, and therefore $|(x_n - 1)^{p^n}|_C < 1$ for all $n$. In any non-archimedean field, if a sequence converges to a limit $y_0$, its sequence of norms must eventually be stationary (if $y_0 \neq 0$) or converge to $0$ (if $y_0 = 0$). In either case, since every term in the sequence has norm strictly less than $1$, the norm of the limit must satisfy $|y_0|_C < 1$. Therefore, $|y^\flat|_{C^\flat} = |y_0|_C < 1$, proving that $x^\flat$ lies precisely in the open unit disk $D = 1 + \mathfrak{m}_{C^\flat}$. Since $C$ is perfectoid, every such sequence in $C$ lifts to a valid continuous homomorphism, proving the bijection.

$(iii)$~To verify $\Q_p^\times$-equivariance, we define the action on the morphisms. The action of $p \in p^\Z$ is given by pre-composition with multiplication by $p$: $(p \cdot \chi)(a) = \chi(pa)$. On the sequence, this shifts the evaluation: $x_n \mapsto \chi(p/p^n) = \chi(1/p^{n-1}) = x_{n-1} = x_n^p$. This exactly matches the Frobenius action $\varphi(x^\flat) = (x^\flat)^p$ on $D$. 
Similarly, for $u \in \Z_p^\times$, the action is $\chi^u(a) = \chi(a)^u$. On the sequence, this yields $(x_n^u)$, which exactly matches the exponentiation action $(x^\flat)^u$ on the $\Z_p$-module $D$.
On the open unit disk $D = 1 + \mathfrak{m}_{C^\flat}$ of the tilt, the group $\Q_p^\times$ acts as follows: $p \in p^\Z$ acts via the Frobenius $\varphi(x^\flat) = (x^\flat)^p$, and $u \in \Z_p^\times$ acts via the natural exponentiation $x^\flat \mapsto (x^\flat)^u$, which is well-defined because $D$ is a $\Z_p$-module.
\end{proof}

Theorem \ref{main} shows a profound feature of absolute $\Fone$-geometry: the moduli space of points of $\spzf$ does not require the test field $C$ to be of characteristic $p$. Let $C$ be any  perfectoid field, regardless of its characteristic (for instance, $C = \C_p$ in characteristic $0$). The algebraic structure of the stalk $\Z[1/p]_+$ automatically forces the extraction of $p$-th roots, while the  topology forces the image into the principal units. Consequently, evaluating the  points of the $\Fone$-curve $\spzb$ over $C$ canonically yields the open unit disk of its tilt, $1 + \mathfrak{m}_{C^\flat}$. 

If $C$ is of characteristic $p$, then $C^\flat = C$. However, if $C$ is of characteristic $0$, the $\Fone$-stalk acts as a universal tilting functor, automatically passing to $C^\flat$. The natural symmetries of the stalk at $p$ correspond to the action of $\Q_p^\times \simeq p^\Z \times \Z_p^\times$, where $p^\Z$ acts via the Frobenius and $\Z_p^\times$ acts as the Galois gauge group. Removing the trivial point $x_0=1$ and quotienting by this $\Q_p^\times$-action yields exactly the classical points of the Fargues--Fontaine curve $\mathcal{X}_{C^\flat, \Q_p}$.\vspace{.03in} 

Thus, the absolute $\Fone$-curve $\spzf$ is intrinsically compatible with the tilting correspondence. It serves as a universal geometric receptacle for perfectoid fields of arbitrary characteristic, canonically extracting their tilt and recovering the moduli space of their characteristic-zero untilts.

\subsection{Points of $\spzf$ in a perfectoid field of characteristic $p$}

Next, we specialize Theorem  \ref{main} to 
the case of a perfectoid field $C$ of characteristic $p$. 

\begin{corollary}\label{basic} 
Let $H_p = \Z[1/p]$, and let $F$ be a perfectoid field 
of characteristic $p$.
\begin{enumerate}
    \item[(i)] The  map: 
    \[
\Hom(\Fone[T^{H_p^+}],HF)\longrightarrow F\qquad \rho \longmapsto \rho(T)
    \]
    determines  a canonical bijection. 
    For each $x \in F$, the corresponding morphism 
    $\psi_x: \Fone[T^{H_p^+}]\to HF$ is uniquely determined by setting:
    \[
    \psi_x(T^{a/p^k}) = x^{a/p^k}.
    \]

    \item[(ii)] The morphism $\rho = \psi_x$ is local 
    at $p$ (\cf Definition~\ref{defnloc}) 
    if and only if: 
    \[
    |x - 1| < 1.
    \]
\end{enumerate}
\end{corollary}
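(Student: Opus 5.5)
The plan is to deduce both parts from Proposition~\ref{propflat} and Theorem~\ref{main}, using that $F$ is perfect because it is perfectoid of characteristic $p$.

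For $(i)$, I will start from the base-change adjunction (\cite[Proposition 2.2]{CC4}): a morphism $\rho:\Fone[T^{H_p^+}]\to HF$ is the same thing as a monoid homomorphism $\chi:(\Z[1/p]_+,+)\to(F,\cdot)$ with $\chi(0)=1$. Proposition~\ref{propflat} then identifies these with $F^\flat=\varprojlim_{x\mapsto x^p}F$ through the sequence $x_n=\chi(1/p^n)$.

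Since $\operatorname{char}F=p$, the Frobenius $x\mapsto x^p$ is an injective ring endomorphism of $F$. For a perfectoid field it is also surjective, because perfectoid fields of characteristic $p$ are perfect. So the projection $F^\flat\to F$, $(x_n)\mapsto x_0$, is a bijection, and its inverse sends $x$ to $(x^{1/p^n})_n$ with the unique $p^n$-th roots.

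Under this identification $\rho(T)=\chi(1)=x_0$, which shows that $\rho\mapsto\rho(T)$ is a bijection onto $F$. I will also check that $\psi_x(T^{a/p^k})=(x^{1/p^k})^a$ is well defined. If $a/p^k=a'/p^{k'}$ with $k'=k+j$, then $a'=ap^j$ and $(x^{1/p^{k+j}})^{ap^j}=(x^{1/p^k})^a$ by uniqueness of roots. The map is multiplicative by construction.

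For $(ii)$, I will apply Theorem~\ref{main}(i) with $C=F$. That theorem shows that $\chi$ is continuous for the $p$-adic topology $\tau_p$ on $H_p$ exactly when $x_0=\chi(1)$ lies in $1+\mathfrak m_F$. The corresponding point is then a homomorphism $H_p\to 1+\mathfrak m_F$, and its extension to the Grothendieck group is automatic because the values are units. Since $x_0=\rho(T)=x$, the condition becomes $|x-1|<1$.

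The step needing care, though not a serious obstacle, is the necessity direction. Locality means that the extended group homomorphism $\rho:H_p\to F^\times$ is $\tau_p$-continuous. This forces $\chi(p^n)=x^{p^n}\to 1$. The case analysis in the proof of Theorem~\ref{main}(i) covers $|x|<1$, $|x|>1$, and $|x|=1$ with residue not equal to $1$, and it excludes all three; the case $x=0$ is excluded as part of $|x|<1$.

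In characteristic $p$ the sufficiency direction is even simpler than in the general case. We have $x^{p^n}-1=(x-1)^{p^n}$, so $|x^{p^n}-1|=|x-1|^{p^n}\to 0$. Continuity on all of $H_p$ then follows as in the proof of Theorem~\ref{main}: on elements $ap^n$ one has $|\chi(ap^n)-1|\le|\chi(p^n)-1|$, and for general elements of $p^n\Z_{(p)}\cap H_p$ the same bound holds, because every $x^{1/p^k}$ also lies in $1+\mathfrak m_F$.
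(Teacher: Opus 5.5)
Your proposal is correct and follows essentially the same route as the paper: part $(i)$ is Proposition~\ref{propflat} combined with the identification $F^\flat\cong F$, $(x_n)\mapsto x_0$, which holds because $F$ is perfect, and part $(ii)$ is Theorem~\ref{main}$(i)$ applied with $C=F$. Your characteristic-$p$ identity $x^{p^n}-1=(x-1)^{p^n}$ is a harmless shortcut for the sufficiency direction, which the paper instead obtains from the general contraction estimate in the proof of Theorem~\ref{main}.
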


\begin{proof}
$(i)$~Follows from Proposition \ref{propflat} and the identity $F=F^\flat$ coming from the existence and uniqueness of $p$-th roots in $F$.\newline
$(ii)$~Follows from Theorem \ref{main}.
\end{proof}

Corollary~\ref{basic} exhibits a relevant geometric phenomenon. 
The full space of absolute $F$-points of the stalk
$
\Fone[T^{H_p^+}]
$
identifies canonically with the affine line over $F$. 
The locality condition, however, imposes a strong rigidity constraint: compatibility with the $p$-adic topology forces the image of the geometric point to lie in the open unit disk
$
1+\mathfrak m_F$, 
which characterizes precisely the Fargues--Fontaine locus.

\subsection{Cyclotomic embeddings}

The next part reinterprets the  geometric local points of $\spzf$ in the language of perfectoid untilts. Local $\Fone$-morphisms get identified with cyclotomic embeddings into characteristic-zero untilts of a perfectoid field $F$.\vspace{.03in}

We start by recalling the definition 
of the $p$-adic cyclotomic field.

\begin{definition}
Let $\mu_{p^\infty}$ denote the group of all 
$p$-power roots of unity in an algebraic closure 
$\overline{\Q}_p$. 
The cyclotomic extension 
$\Q_p(\mu_{p^\infty})$ 
is obtained by adjoining all such roots to $\Q_p$. 
Its $p$-adic completion is a perfectoid field 
of characteristic zero, which we denote by 
$\Q_p^{\mathrm{cyc}}$.
\end{definition}

Let $F$ be an algebraically closed perfectoid field 
of characteristic $p$. 
Recall (\cite{Scholze}) that an \emph{untilt} of $F$ 
is a pair $(C,\iota)$, where 
$\iota: F \xrightarrow{\sim} C^\flat$ 
is an isomorphism of fields that identifies 
the valuation ring $\mathcal O_F$ 
with the valuation ring $\mathcal O_{C^\flat}$.\vspace{.03in}

 Corollary~\ref{basic} $(ii)$ states that the set of \emph{local morphisms} 
from the stalk at $p$ of the structure sheaf of $\spzf$
to $HF$ coincides with the open unit disk inside $F$:
\[
\operatorname{Hom}_{\mathrm{loc}}
(\Fone[T^{H_p^+}], HF)
=
\{ \psi_x \mid x\in F,\ |x-1|<1 \}.
\]

We now translate Proposition~2 in 
 Lecture~8 of \cite{Lurie} 
into the language of absolute geometry. 
This result shows that a local $\Fone$-morphism 
is not merely an abstract point, but corresponds 
precisely to the algebraic data of a cyclotomic 
embedding.

\begin{proposition}\label{lurieprop2}
Let 
$\operatorname{Hom}_{\mathrm{loc}}^\circ
(\Fone[T^{H_p^+}], HF)$ 
denote the set of non-trivial \footnote{those for which 
$x=\psi_x(T)\neq1$, using the notation of Corollary~\ref{basic}} local morphisms. 
There is a canonical, $\Z_p^\times$-equivariant 
bijection between:
\begin{enumerate}
    \item The set of non-trivial local morphisms: 
    $
\operatorname{Hom}_{\mathrm{loc}}^\circ
    (\Fone[T^{H_p^+}], HF)
    $\vspace{.02in}

    and\vspace{.02in}
    
    \item The set of triples $(C,\iota,u)$, where 
    $(C,\iota)$ is a perfectoid untilt of $F$, 
    and 
    \[
    u:\Q_p^{\mathrm{cyc}}\hookrightarrow C
    \]
    is a continuous embedding of topological fields.
\end{enumerate}
\end{proposition}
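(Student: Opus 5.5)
We reduce the statement to Lurie's Proposition~2 in Lecture~8 of \cite{Lurie} via Corollary~\ref{basic}. By Corollary~\ref{basic}, a non-trivial local morphism is the same as $\psi_x$ with $x\in F$, $|x-1|<1$, $x\neq 1$, and $\psi_x$ is determined by the compatible system $x^{1/p^n}$, $n\ge 0$, of $p$-power roots of $x$ in $F$. Lurie's result identifies untilts of $F$ equipped with a continuous embedding $u:\Q_p^{\mathrm{cyc}}\hookrightarrow C$ with elements $\epsilon\in 1+\mathfrak m_F$, $\epsilon\neq 1$. The dictionary sends $\epsilon$ to the untilt $C=W(\mathcal O_F)[1/p]/(\xi_\epsilon)$, where $\xi_\epsilon=1+[\epsilon^{1/p}]+\dots+[\epsilon^{1/p}]^{p-1}$. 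The embedding $u$ sends the standard compatible system $(\zeta_{p^n})$ of primitive $p^n$-th roots of unity to $\epsilon^\sharp_{(n)}$, the image of $[\epsilon^{1/p^n}]$. Conversely, $(C,\iota,u)$ gives back $\epsilon:=\iota^{-1}\bigl((u(\zeta_{p^n}))_n\bigr)$, viewing $C^\flat=\varprojlim_{x\mapsto x^p}C$. So the bijection is $\psi_x\mapsto(C_x,\iota_x,u_x)$ with $\epsilon=x$.

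The key steps are as follows.

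\textbf{Step 1 (triple to disk).} Given $(C,\iota,u)$, the system $(u(\zeta_{p^n}))_n$ lies in $C^\flat$ through its multiplicative description (Proposition~\ref{propflat}). Its $0$-th component is $1$, so it has $p$-adic distance less than $1$ from $1$ and defines an element of $1+\mathfrak m_{C^\flat}$. Since $u(\zeta_p)\neq 1$, this element is $\neq 1$. Pulling back by $\iota$ gives $x$. Equivalently, via Theorem~\ref{main}(ii), the character $a/p^n\mapsto u(\zeta_{p^n})^a$ is a $C$-point of the stalk.

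\textbf{Step 2 (disk to triple).} Given $x\neq 1$ in $1+\mathfrak m_F$, we show that $\xi_x$ is primitive of degree one, so that $C_x$ is a perfectoid untilt. The Witt vector $[x]-1$ is divisible by $\xi_x$ in the appropriate sense, and the images of $[x^{1/p^n}]$ are compatible primitive $p$-power roots of unity. This produces $u_x$, and continuity holds because the completion $\Q_p^{\mathrm{cyc}}$ is generated topologically by these roots. These facts are precisely the content of Lurie's proposition, which we invoke rather than reprove.

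\textbf{Step 3 (inverse bijections).} We check that Steps 1 and 2 are mutually inverse, up to the evident notion of isomorphism of triples. This is the uniqueness part of Lurie's statement: $\ker\theta$ is principal, generated by $\xi_x$, and $\sharp$ recovers the $\zeta_{p^n}$.

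\textbf{Step 4 (equivariance).} For $\Z_p^\times$-equivariance, let $v\in\Z_p^\times$. It acts on local morphisms by $\psi_x\mapsto\psi_{x^v}$, as in Theorem~\ref{main}(iii). It acts on triples by precomposing $u$ with the Galois automorphism $\sigma_v$ of $\Q_p^{\mathrm{cyc}}$ defined by $\zeta\mapsto\zeta^v$. Then $u\circ\sigma_v$ sends $\zeta_{p^n}$ to $u(\zeta_{p^n})^v$, whose flat system is $x^v$, so the bijection intertwines the two actions.

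The main obstacle is Step 2 together with the choice of normalization. One must make sure that Lurie's parametrization uses exactly $\epsilon=x$ rather than some other coordinate on the disk, and that non-triviality $x\neq 1$ corresponds to $\zeta_p\neq 1$, so that $u$ is injective. If needed, we would verify directly that $\theta(\xi_x)=0$ using $\theta([x^{1/p}])=u(\zeta_p)$, a primitive $p$-th root of unity, and $1+\zeta+\dots+\zeta^{p-1}=0$.
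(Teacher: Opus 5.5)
Your proposal is correct and takes the same route as the paper. It reduces, via Corollary~\ref{basic}(ii), to the punctured disk $0<|x-1|<1$ and invokes Lurie's Proposition~2 of Lecture~8, with the same dictionary $x=\iota^{-1}u^\flat(\epsilon)$ and $\xi=1+[x^{1/p}]+\cdots+[x^{(p-1)/p}]$; your explicit check of $\Z_p^\times$-equivariance is a useful addition that the paper leaves implicit. One small slip: in Step~1, a $0$-th component equal to $1$ gives only $|\epsilon'|_{C^\flat}=1$, not $|\epsilon'-1|_{C^\flat}<1$; the right reason is that every $u(\zeta_{p^n})$ lies in $1+\mathfrak m_C$, so Theorem~\ref{main}(ii) applies, as your ``equivalently'' sentence already indicates.
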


\begin{proof}
By Corollary~\ref{basic} $(ii)$, a non-trivial local morphism 
$\psi_x$ is uniquely determined by an element 
of the punctured open unit disk
\[
D=\{x\in F\mid 0<|x-1|<1\}.
\]
The statement then follows directly from \cite[Proposition~2 Lecture~8]{Lurie}, 
where one identifies such elements with the data 
of perfectoid untilts equipped with a 
cyclotomic embedding.
\end{proof}

We briefly recall the construction (described in details in \cite{Lurie}) 
of the bijection between the disk $D\subset F$ and the triples 
$(C,\iota,u)$. 

Let 
\[
\epsilon\in(\Q_p^{\mathrm{cyc}})^\flat
\]
be the element defined by the compatible system 
of $p$-power roots of unity:
\[
\epsilon
:=
\left(
1,\zeta_p,\zeta_{p^2},\zeta_{p^3},\ldots
\right).
\]

Given a triple $(C,\iota,u)$, consider the element
\[
\epsilon'
=
u^\flat(\epsilon)
\in C^\flat.
\]
It satisfies
\[
0<|\epsilon'-1|_{C^\flat}<1,
\]
and therefore
\[
x
:=
\iota^{-1}(\epsilon')
\]
belongs to the disk $D$.
Conversely, let 
$\mathbf A_{\mathrm{inf}}$
denote the ring of Witt vectors of the valuation ring 
$\mathcal O_F$ of $F$. 
Given an element $x\in D$, define
\begin{equation}\label{2perf}
\xi
=
1+[x^{1/p}]
+\cdots+
[x^{(p-1)/p}]
\in \mathbf A_{\mathrm{inf}},
\end{equation}
where the brackets denote the Teichm\"uller lift. 
One shows that $\xi$ is a distinguished element (\cf \cite[Definition 15, Lecture 3]{Lurie}) of 
$\mathbf A_{\mathrm{inf}}$, which allows the construction 
of a triple $(C,\iota,u)$. 
The perfectoid field $C$ is obtained as the field of fractions of the quotient $
\mathbf A_{\mathrm{inf}}/(\xi)$, 
and the embedding $
u:\Q_p^{\mathrm{cyc}}
\hookrightarrow C$ 
is determined by the compatible system $
[x^{1/p^n}]
\in
\mathbf A_{\mathrm{inf}}/(\xi)$.
We refer to \cite[Lecture~8]{Lurie} 
for the detailed proof that these constructions 
establish a bijection:
\[
\{(C,\iota,u)\}
\stackbin[\sim]{\iota^{-1}u^\flat(\epsilon)}{\xrightarrow{\hspace{1cm}}}
\left\{
x\in F
\;\middle|\;
0<|x-1|_F<1
\right\}.
\]

\begin{remark}\label{2perf1}
In our framework, the choice of $x \in 1 + \mathfrak{m}_F$ completely dictates the characteristic of the resulting untilt. In $p$-adic Hodge theory, the untilt $K$ corresponding to $x$ is constructed as the field of fractions of the quotient $\mathcal{O}_K = \mathbf A_{\text{inf}} / (\xi)$, where $\xi$ is the  distinguished element of degree 1 given by \eqref{2perf}. For $x \neq 1$, the ideal $(\xi)$ does not contain $p$, and the resulting untilt $K$ has characteristic zero. 

However, if one makes the trivial choice $x = 1$ (corresponding, in our case, to the trivial $\Fone$-morphism $1^h = 1$), the polynomial formula \eqref{2perf} for the distinguished element evaluates exactly to:
\[
\xi = \underbrace{1 + 1 + 1 + \dots + 1}_{p \text{ times}} = p.
\]
Consequently, the ideal generated by $\xi$ is simply $p \mathbf A_{\text{inf}}$. The resulting untilt is the field of fractions of the quotient $\mathbf A_{\text{inf}} / (p) \simeq \mathcal{O}_F$, which is exactly the trivial untilt $F$ of characteristic $p$. Thus, the algebraic specialization of the distinguished element $\xi \to p$ as $x \to 1$ rigorously demonstrates why the trivial choice $x=1$ corresponds to the closed point $p \in \Spec(\Z)$. The Fargues--Fontaine curve $\mathcal{X}_{F, \Q_p}$, which lives over $\Q_p$, naturally excludes this trivial point, and so its set of closed points corresponds exactly to the restriction to the punctured disk $(1 + \mathfrak{m}_F) \setminus \{1\}$.
\end{remark}

\subsection{Symmetries and the Fargues-Fontaine curve}\label{ffcurve}

In the next part, we study the natural symmetries of the completed stalk. Quotienting the moduli space of local points by these symmetries recovers first the untilts of the perfectoid field $F$, and then, after Frobenius quotient, the closed points of the Fargues--Fontaine curve.\vspace{.05in}

Let $F$ be an algebraically closed perfectoid field of characteristic $p$. 
The space of  local morphisms 
\[
\operatorname{Hom}_{\mathrm{loc}}(\Fone[T^{H_p^+}], HF)
\]
naturally carries an action of the continuous automorphism group of the completed stalk.

Because the stalk is generated by the group $H_p=\Z[1/p]$, its local completion at $p$ is $\Q_p$. 
As a topological additive group, the automorphism group of $\Q_p$ has the following description:
\[
\operatorname{Aut}(\Q_p) 
\cong 
\Q_p^\times 
\cong 
p^\Z \times \Z_p^\times.
\]
For any $a\in\Q_p^\times$ and any local morphism 
$\psi_x\in\Hom_{\mathrm{loc}}(\Fone[T^{H_p^+}],HF)$, 
the action is given by
\[
\psi_x \longmapsto \psi_{x^a}.
\]

In view of the bijection established in Proposition~\ref{lurieprop2}, 
the action of an element 
$u\in\Z_p^\times$ 
corresponds exactly to the Galois action of
\[
\operatorname{Gal}(\Q_p^{cyc}/\Q_p)
\cong 
\Z_p^\times
\]
on the cyclotomic embeddings, that modifies the choice of the compatible system of primitive roots of unity.\vspace{.05in}

We may now interpret Corollaries~3 and~4 of Lecture~8 in \cite{Lurie}
as describing the geometric quotients of this moduli space under these intrinsic symmetries of the topos.
Let denote by
$$
\mathcal{M}_{loc}:=\operatorname{Hom}_{\mathrm{loc}}(\Fone[T^{H_p^+}], HF)\setminus\{1\}
$$
the space of  non-trivial local morphisms.
\begin{corollary}\label{luriecor3}
There is a canonical bijection between:
\begin{enumerate}
    \item The quotient space 
    $\mathcal{M}_{loc}/ \Z_p^\times$ 
    of non-trivial local morphisms modulo the compact automorphism group of the completed stalk.
    
    \item The set of isomorphism classes of perfectoid untilts $C$ (of characteristic zero) of $F$ that contain all $p$-power roots of unity.
\end{enumerate}
\end{corollary}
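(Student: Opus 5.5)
The plan is to reduce the statement to Proposition~\ref{lurieprop2} and then take the quotient by $\Z_p^\times$ on both sides of that equivariant bijection. By Proposition~\ref{lurieprop2}, $\mathcal{M}_{loc}$ is in canonical $\Z_p^\times$-equivariant bijection with the set of triples $(C,\iota,u)$, where $(C,\iota)$ is an untilt of $F$ and $u:\Q_p^{\mathrm{cyc}}\hookrightarrow C$ is a continuous embedding. Here $u\in\Z_p^\times\cong\operatorname{Gal}(\Q_p^{\mathrm{cyc}}/\Q_p)$ acts on triples by precomposing $u$ with the Galois automorphism. An equivariant bijection descends to a bijection of orbit sets, so $\mathcal{M}_{loc}/\Z_p^\times$ is in canonical bijection with the set of triples modulo this Galois action. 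It therefore remains to identify that quotient with isomorphism classes of characteristic-zero untilts containing $\mu_{p^\infty}$.

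First, I would check that each triple gives such an untilt. The untilt $C$ has characteristic zero: it receives $\Q_p^{\mathrm{cyc}}$, or alternatively $x\neq1$ implies $p\notin(\xi)$ by Remark~\ref{2perf1}. The embedding $u$ sends $\mu_{p^\infty}\subset\Q_p^{\mathrm{cyc}}$ into $C$, so $C$ contains all $p$-power roots of unity. The resulting map $[(C,\iota,u)]\mapsto[(C,\iota)]$ is constant on $\Z_p^\times$-orbits, hence well defined on the quotient.

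Next, for surjectivity, let $(C,\iota)$ be a characteristic-zero untilt containing $\mu_{p^\infty}$. Choose a compatible system of primitive $p^n$-th roots of unity in $C$. This defines a field embedding $\Q_p(\mu_{p^\infty})\to C$, which is automatically continuous since it is an algebraic extension of $\Q_p$ and the absolute value on $C$ restricts to the unique extension of $|\cdot|_p$. Because $C$ is complete, the embedding extends by continuity to $u:\Q_p^{\mathrm{cyc}}\hookrightarrow C$.

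For injectivity, suppose two triples $(C,\iota,u)$ and $(C',\iota',u')$ have isomorphic underlying untilts. Transport along the isomorphism, which by definition is compatible with $\iota,\iota'$, so that we may assume $(C,\iota)=(C',\iota')$. Then $u$ and $u'$ are two embeddings of $\Q_p^{\mathrm{cyc}}$ into the same field, and both are determined by compatible systems $(\zeta_{p^n})$ and $(\zeta'_{p^n})$ of primitive roots in $\mu_{p^\infty}(C)$. The group $\mu_{p^\infty}(C)$ is isomorphic to $\Q_p/\Z_p$ with automorphism group $\Z_p^\times$, and this group acts simply transitively on generators of the Tate module $T_p\mu(C)\cong\Z_p$. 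Hence $u'=u\circ\sigma_a$ for a unique $a\in\Z_p^\times$, and the two triples lie in the same orbit.

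The main obstacle is making sure that isomorphisms of untilts are compatible with the $\Z_p^\times$-action on $\mathcal{M}_{loc}$, that is, that the element $x=\iota^{-1}u^\flat(\epsilon)\in F$ is invariant under transport along isomorphisms of untilts. This holds because $\iota$ is part of the untilt data, but one must check it against the explicit inverse construction via $\xi$. I would verify that changing $u$ by $a$ replaces $\epsilon'$ by $\epsilon'^a$, which matches $\psi_x\mapsto\psi_{x^a}$. This is precisely Corollary~3 of Lecture~8 in \cite{Lurie}, which I would cite for the detailed verification.
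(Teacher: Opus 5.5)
Your proposal is correct and follows the same route as the paper. Both arguments transport the $\Z_p^\times$-quotient through the equivariant bijection of Proposition~\ref{lurieprop2}, use that $\Z_p^\times$ acts simply transitively on the choices of the cyclotomic embedding $u$ (equivalently, on generators of the Tate module of $\mu_{p^\infty}(C)$), and note that such a $u$ exists exactly when $C$ contains $\mu_{p^\infty}$. Your explicit surjectivity and injectivity checks, and your verification that $u\mapsto u\circ\sigma_a$ corresponds to $x\mapsto x^a$, just spell out what the paper's proof (relying on Lurie's Corollary~3) leaves implicit.
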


\begin{proof}
By Proposition~\ref{lurieprop2}, the space 
$\mathcal{M}_{loc}$ 
is in bijection with triples 
$(C,\iota,u)$, 
where $(C,\iota)$ is an untilt of $F$ and 
$u:\Q_p^{cyc}\hookrightarrow C$ 
is a continuous embedding. This embedding $u$ is uniquely determined by the choice of a compatible system of primitive $p$-power roots of unity, which algebraically corresponds to the choice of a generator of the Tate module $\Z_p(1)$ inside $C$.
The action of an element 
$v\in\Z_p^\times$ 
on $\mathcal{M}_{loc}$ 
is given by 
$\psi_x\mapsto\psi_{x^v}$. 
On the level of the Tate module, this action corresponds exactly to changing the choice of generator. 

Consequently, passing to the quotient by $\Z_p^\times$ removes the dependence on the specific embedding $u$ and retains only the isomorphism class of the untilt $C$, provided that such an embedding exists (\ie that $C$ contains the $p$-power roots of unity).
\end{proof}

To recover the closed points of the Fargues–Fontaine curve $\mathcal{X}_{F, \Q_p}$, we must quotient by the full automorphism group $\Q_p^\times$. 
The remaining factor is the discrete subgroup $p^\Z$, generated by multiplication by $p$. 
On the space of local morphisms, this subgroup acts by
\[
\psi_x \longmapsto \psi_{x^p},
\]
which coincides with the geometric Frobenius map $\varphi$.

\begin{corollary}\label{luriecor4}
There is a canonical bijection between:
\begin{enumerate}
    \item The quotient space 
    $\mathcal{M}_{loc}/\Q_p^\times$ 
    of non-trivial local morphisms modulo the full automorphism group of the completed stalk.
    
    \item The closed points of the Fargues–Fontaine curve 
    $\mathcal{X}_{F,\Q_p}$ over $F$.
\end{enumerate}
\end{corollary}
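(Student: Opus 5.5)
The plan is to factor the quotient by $\Q_p^\times\cong p^\Z\times\Z_p^\times$ in two stages. The $\Z_p^\times$-quotient is already handled by Corollary~\ref{luriecor3}; it then remains to understand the residual action of $p^\Z$ and compare it with the Frobenius quotient that defines the closed points of $\mathcal X_{F,\Q_p}$. Since the two factors commute and $\Q_p^\times$ is their direct product, we have
\[
\mathcal M_{loc}/\Q_p^\times=(\mathcal M_{loc}/\Z_p^\times)/p^\Z.
\]
So it suffices to show two things: the $p^\Z$-action on $\mathcal M_{loc}/\Z_p^\times$ corresponds, under the bijection of Corollary~\ref{luriecor3}, to the Frobenius action on untilts; and the resulting orbit set is the set of closed points.

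\emph{Step 1: compatibility of the actions.} First I check that the $p^\Z$-action preserves $\mathcal M_{loc}$. Since $|x^p-1|=|x-1|^p$ in characteristic $p$, we have $0<|x-1|<1$ if and only if $0<|x^p-1|<1$, and $\psi_x\mapsto\psi_{x^p}$ is bijective because $F$ is perfect.

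Next I transport this action through Proposition~\ref{lurieprop2}. If $(C,\iota,u)$ corresponds to $x=\iota^{-1}u^\flat(\epsilon)$, then $x^p=(\iota\circ\varphi^{-1})^{-1}u^\flat(\epsilon)$. Hence $\psi_{x^p}$ corresponds to the triple $(C,\iota\circ\varphi^{-1},u)$: the same field $C$ with the same cyclotomic embedding, but with the identification $F\simeq C^\flat$ twisted by Frobenius. After forgetting $u$ (the $\Z_p^\times$-quotient), the action of $p$ on isomorphism classes of untilts $(C,\iota)$ is therefore exactly the Frobenius twist $(C,\iota)\mapsto(C,\iota\circ\varphi^{-1})$. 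Its orbits are the Frobenius-equivalence classes of untilts.

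\emph{Step 2: identification with closed points.} I would invoke the Fargues--Fontaine classification, in the form of Corollary~4 of Lecture~8 in \cite{Lurie}. It states that closed points of $\mathcal X_{F,\Q_p}$ are in bijection with characteristic-zero untilts of $F$ modulo Frobenius. Equivalently, they correspond to primitive degree-one ideals of $\mathbf A_{\mathrm{inf}}[1/p]$ modulo $\varphi$, which correspond to $\varphi^\Z$-orbits of $(1+\mathfrak m_F)\setminus\{1\}$ under the $\Z_p^\times$ action via $\xi$ as in \eqref{2perf}.

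\emph{The main obstacle} is a mismatch in Corollary~\ref{luriecor3}: its image consists only of untilts containing all $p$-power roots of unity, whereas the Fargues--Fontaine curve needs every characteristic-zero untilt. I would close this gap using algebraic closedness of $F$. Every untilt $C$ of an algebraically closed $F$ is itself algebraically closed, hence contains $\mu_{p^\infty}$, so the restriction in Corollary~\ref{luriecor3} is vacuous here. Moreover, such a $C$ admits a continuous embedding $\Q_p^{\mathrm{cyc}}\hookrightarrow C$ by continuity and completeness of $C$, so every untilt actually occurs in the image of Proposition~\ref{lurieprop2}. Combining this surjectivity with Step 1 and Lurie's Corollary~4 gives the bijection of Corollary~\ref{luriecor4}, with the trivial point excluded as explained in Remark~\ref{2perf1}.
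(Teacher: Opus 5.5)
Your proposal is correct and follows the same route as the paper: split $\Q_p^\times\cong p^\Z\times\Z_p^\times$, use Corollary~\ref{luriecor3} to pass to isomorphism classes of untilts, identify the residual action of $p$ with the Frobenius twist of the tilt identification, and invoke the description of the closed points of $\mathcal X_{F,\Q_p}$ as Frobenius orbits of characteristic-zero untilts. Two things you add are left implicit in the paper's proof: the explicit transport $\psi_{x^p}\leftrightarrow(C,\iota\circ\varphi^{-1},u)$, and the observation that the $\mu_{p^\infty}$-condition in Corollary~\ref{luriecor3} is vacuous because every untilt of an algebraically closed $F$ is itself algebraically closed.
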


\begin{proof}
By definition, the closed points of the Fargues–Fontaine curve $\mathcal{X}_{F,\Q_p}$ over $F$ are in canonical bijection with the Frobenius orbits of the characteristic-zero untilts of $F$. 

By Corollary~\ref{luriecor3}, the quotient 
$\mathcal{M}_{loc}/\Z_p^\times$ 
yields the set of untilts $C$. 
The residual action of 
$p\in p^\Z\subset \Q_p^\times$ 
maps an untilt $C$ to its Frobenius shift $C^\varphi$, that is, the untilt whose tilt identification is twisted by the Frobenius of $F$. 

Therefore, taking the full quotient by 
$\Q_p^\times \cong p^\Z \times \Z_p^\times$ 
is equivalent to first passing to the set of untilts and then quotienting by the Frobenius equivalence relation. 
This produces precisely the set of closed points of $\mathcal{X}$.
\end{proof}

The preceding sequence of corollaries yields a striking conceptual unification. 
The set of closed points of the Fargues–Fontaine curve  is obtained  from the open unit disk
\[
Y=\{\,0<|x-1|<1\,\}
\]
and quotienting by the Frobenius action $x\mapsto x^p$. 
Traditionally, the need to quotient first by $\Z_p^\times$ 
(the cyclotomic symmetry) and then by $p^\Z$ 
(the Frobenius action) appears as a collection of distinct arithmetic steps.

On the other hand, from the perspective of the $\Fone$-geometry,  these two operations arise as components of a single canonical construction. 
The closed points of the Fargues–Fontaine curve $\mathcal{X}_{F, \Q_p}$ may be viewed as the moduli space of local $F$-points  at $p$ of the  $\Fone$-arithmetic curve $\spzf=(\spz, \mathcal F)$,
taken modulo the natural topological symmetries 
$\operatorname{Aut}(\Q_p)$ of the completed stalk. 
In this way, the absolute $\Fone$-geometry  intrinsically encodes the construction of $\mathcal{X}_{F,\Q_p}$.

\subsection{Scholze's heuristic}\label{heuristic}

In the foundational philosophy of $p$-adic Hodge theory, P. Scholze proposed the following heuristic idea (\cf \cite[Lecture 1]{Lurie}): \vspace{.05in}

\emph{For an algebraically closed perfectoid field $F$ of characteristic $p$, the set of untilts of $F$ modulo isomorphism serves as a good replacement for the set of $F$-valued points of $\spz$.}\vspace{.05in} 

In this section we demonstrate that the $\Fone$-curve $\spzf$ gives a satisfactory answer to this requirement.  The stalks are given as follows:
\begin{itemize}
    \item $\mathcal{F}_p = \Fone[T^{\Z[1/p]_+}]$ at a closed point $p$ (for any prime $p$).
    \item $\mathcal{F}_\eta = \Fone$ at the generic point $\eta\in \spz$.
\end{itemize}

An $F$-point of $\spzf$ is given by the choice of a  point $x \in \Spec(\Z)$ together with a non-trivial local morphism of $\Fone$-algebras:   $\mathcal{F}_x\to HF$.

\begin{theorem}\label{scholze_heuristic}
Let $F$ be an algebraically closed perfectoid field of positive characteristic $p$. The moduli space of geometric $F$-points of $\spzf$, modulo the canonical symmetries of the stalks, decomposes over the closed points of $\Spec(\Z)$ as follows:
\begin{enumerate}
    \item At any prime different from $p$, the fiber rigidly collapses to a single trivial point.
    \item At the prime $p$, the fiber is canonically in bijection with the space of all untilts of $F$ modulo the Frobenius.
\end{enumerate}
\end{theorem}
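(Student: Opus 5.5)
The proof splits according to the closed point $x\in\Spec(\Z)$ over which the local morphism lives; the generic point is excluded because $\mathcal F_\eta=\Fone$ admits only the trivial morphism to $HF$.

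\emph{The prime $p$.} By Proposition~\ref{prop12} the stalk is $\mathcal F_p=\Fone[T^{\Z[1/p]_+}]$. By Proposition~\ref{classpla} its places are $\Sigma_{H_p}=\{\infty,p\}$. The archimedean place gives nothing: as in the discussion preceding Theorem~\ref{main}, a continuous homomorphism from the connected group $\R$ to the totally disconnected group $F^\times$ is trivial. So locality means locality at $p$. Corollary~\ref{basic}(ii) identifies the non-trivial local morphisms with the punctured disk $0<|x-1|<1$, that is, with $\mathcal M_{loc}$. The symmetries of the stalk are $\operatorname{Aut}(\Q_p)\cong\Q_p^\times$, acting by $\psi_x\mapsto\psi_{x^a}$; by Theorem~\ref{main}(iii) this action is well defined. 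Corollary~\ref{luriecor4} identifies $\mathcal M_{loc}/\Q_p^\times$ with the closed points of $\mathcal X_{F,\Q_p}$, which are the Frobenius orbits of characteristic-zero untilts of $F$. Adjoining the trivial morphism $x=1$, which by Remark~\ref{2perf1} corresponds to the trivial untilt $F$ itself, gives all untilts of $F$ modulo Frobenius. This proves item~2.

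\emph{A prime $\ell\neq p$.} Here $\mathcal F_\ell=\Fone[T^{\Z[1/\ell]_+}]$, and the relevant places are $\infty$ and $\ell$. The place $\infty$ is excluded by the same connectedness argument. At $\ell$, a local morphism is a continuous $\chi:\Z[1/\ell]\to F^\times$ with $\chi(\ell^n)=x_0^{\ell^n}\to1$, where $x_0=\chi(1)$. I would run the case analysis of Theorem~\ref{main}(i) with $\ell$ in place of $p$.
\begin{itemize}
\item If $|x_0|\neq1$, the norms $|x_0^{\ell^n}|$ tend to $0$ or $\infty$, so there is no convergence to $1$.
\item If $|x_0|=1$ and $x_0\notin 1+\mathfrak m_F$, the reduction $\alpha$ satisfies $\alpha^{\ell^n}\to1$ in the discrete residue field $k$. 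This forces $\alpha$ to be an $\ell$-power root of unity.
\item If $x_0\in1+\mathfrak m_F$, write $x_0=1+y$. Since $\ell$ is a unit in $F$ (indeed $\ell^{p^m}=\ell$ in characteristic $p$), we get $|x_0^\ell-1|=|\ell y+\dots|=|y|$. The distance to $1$ is therefore preserved, and $x_0^{\ell^n}\to1$ forces $x_0=1$.
\end{itemize}
In characteristic $p$ there are no nontrivial $p$-power roots of unity, but there are $\ell$-power ones. So the key intermediate claim is that $\chi$ takes values in the Teichm\"uller lift of $\mu_{\ell^\infty}(k)$, and hence $\chi$ factors through $\Z[1/\ell]\to\Q_\ell/\Z_\ell\to\mu_{\ell^\infty}(F)$. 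Such a $\chi$ is locally constant, since $\ell^n\Z\subset\ker\chi$ for some $n$, so it is indeed local at $\ell$. The resulting space is $\Hom(\Z[1/\ell],\mu_{\ell^\infty}(F))$, which is identified with $\Hom(\Q_\ell/\Z_\ell,\mu_{\ell^\infty})\cong\Z_\ell$ modulo the constraint from $\Z$. Because $F$ is algebraically closed, $\mu_{\ell^\infty}(F)\cong\Q_\ell/\Z_\ell$ abstractly.

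The symmetry group $\Q_\ell^\times$ acts on these characters by precomposition. A nontrivial character is a surjection onto $\Q_\ell/\Z_\ell$ up to a twist, and I expect $\Q_\ell^\times$ to act transitively on the nontrivial ones, with the twist absorbed by $\ell^\Z$ and the automorphism by $\Z_\ell^\times$. The main obstacle is this bookkeeping at $\ell$: the statement asserts a single trivial point, while the analysis above seems to yield a single $\Q_\ell^\times$-orbit of torsion characters. I would resolve it as in the introduction's formulation, by reading \emph{collapse} as collapse to one orbit, that is, to a single point of the quotient. Concretely, I would show that the discrete torsion characters form exactly one orbit, together with the trivial one, and carry no continuous modulus. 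This contrasts with the positive-dimensional disk at $p$.
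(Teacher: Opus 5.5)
Your treatment of the prime $p$ matches the paper's proof step for step: you exclude the archimedean place by connectedness, then use Corollary~\ref{basic}(ii), Corollary~\ref{luriecor4}, and Remark~\ref{2perf1} for the trivial untilt. At $\ell\neq p$, your reduction to $\ell$-power torsion characters also follows the paper. Your Teichm\"uller step usefully covers the case $|x_0|=1$, $x_0\notin 1+\mathfrak m_F$, which the paper passes over, and your check that torsion characters are local at $\ell$ is correct. The gap is the final step at $\ell\neq p$, which is the whole content of item~1. There you only \emph{expect} transitivity, and the one computation you give is wrong. $\Hom(\Q_\ell/\Z_\ell,\mu_{\ell^\infty})\cong\Z_\ell$ is only the subgroup of characters with $\chi(1)=1$. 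Restriction to $\Z$ gives an exact sequence $0\to\Z_\ell\to\Hom(\Z[1/\ell],\mu_{\ell^\infty}(F))\to\mu_{\ell^\infty}(F)\to 0$, and the middle term is $\Q_\ell$, not ``$\Z_\ell$ modulo the constraint from $\Z$''.

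The paper closes this step as follows. Fix $\mu_{\ell^\infty}(F)\cong\Q_\ell/\Z_\ell$ and send $\chi$ to $(\ell^n\chi(\ell^{-n}))_n\in\varprojlim\Q_\ell/\ell^n\Z_\ell=\Q_\ell$; equivalently, $a\in\Q_\ell$ corresponds to $\chi_a(t)=at \bmod \Z_\ell$. Precomposition must use the extension of $\chi_a$ to $\Q_\ell$ given by locality, since multiplication by $b\in\Z_\ell^\times$ does not preserve $\Z[1/\ell]$. With that extension, precomposition by $b\in\Q_\ell^\times$ sends $\chi_a\mapsto\chi_{ab}$. Hence the non-trivial local morphisms $\mathcal M_\ell\cong\Q_\ell^\times$ form a single free orbit, and $\mathcal M_\ell/\Q_\ell^\times$ is one point.

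Your ``twist and automorphism'' heuristic can also be made into a proof, but it has to be written out:
\begin{enumerate}
\item A non-trivial continuous $\tilde\chi:\Q_\ell\to\Q_\ell/\Z_\ell$ is surjective, because its image is a non-zero divisible subgroup.
\item Its kernel is open, hence equal to $\ell^k\Z_\ell$ for some $k$.
\item Then $x\mapsto\tilde\chi(\ell^k x)$ induces an automorphism of $\Q_\ell/\Z_\ell$, i.e.\ multiplication by some $u\in\Z_\ell^\times$.
\item Therefore $\tilde\chi=\chi_{u\ell^{-k}}$.
\end{enumerate}

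Finally, the trivial character is fixed by all of $\Q_\ell^\times$, so ``one orbit together with the trivial one'' gives two points in the quotient. The paper gets a single point because an $F$-point is by definition a \emph{non-trivial} local morphism, so it works with $\mathcal M_\ell$ throughout.
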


\begin{proof} We study the local $F$-points of $\spzf=(\spz,\mathcal F)$ at each  point $x \in \spz$.\vspace{.03in}

\emph{Case 1: The closed point is $x = q$, for a prime $q \neq p$.} 

A morphism $\rho: \mathcal{F}_q \to HF$ corresponds to an element $y = \rho(T) \in F$. Because $F$ is algebraically closed, $y$ has $q^n$-th roots, so there exist algebraic morphisms $\rho$ such that $y = \rho(T)$. 
However, for the morphism to be \emph{local} at $q$, it must be continuous with respect to the $q$-adic topology of the stalk. This requires:
\[ 
\lim_{n \to \infty} \rho(T^{q^n}) = \lim_{n \to \infty} y^{q^n} = 1. 
\]
Because $F$ has characteristic $p\neq q$, the integer $q$ is a unit in the valuation ring $\mathcal{O}_F$. Therefore, 
$$
z\in F, \ |z - 1| < 1\implies |z^q - 1| = |z - 1|,
$$
since with $\epsilon=z-1$ one has
$$
z^q - 1=q \epsilon +\sum_{k>1} \binom{q}{k}\epsilon^k.
$$
Thus, if $\lim_{n \to \infty} y^{q^n} = 1$ the sequence $|y^{q^n} - 1|$ is constant equal to $0$ for $n$ large, and $y$ is a $q^n$ root of unity. 
Hence, the only local morphisms at $q \neq p$ are the morphisms $y = \zeta_{q^n}$. Because $F$ is an algebraically closed field of characteristic $p$, it contains the algebraic closure $\overline{\mathbf F}_p$ of the prime field. Any non-zero element $z \in \overline{\mathbf F}_p^\times$ belongs to a finite field, meaning $z^m = 1$ for some integer $m$. Taking the absolute value in $F$ yields $|z|^m = 1$, which implies $|z| = 1$. Consequently, for any two distinct elements $a, b \in \overline{\mathbf F}_p$, their difference $a - b$ is a non-zero element of $\overline{\mathbf F}_p$, and thus $|a - b| = 1$. This proves that the induced metric on $\overline{\mathbf F}_p$ is the trivial discrete metric. Because $q \neq p$, the $q$-power roots of unity $\mu_{q^\infty}$ are entirely contained in $\overline{\mathbf F}_p$, and therefore inherit this discrete topology. By fixing a choice of primitive roots, we can identify $\mu_{q^\infty}\subset \overline{\mathbf F}_p$ with the quotient group $\Q_q/\Z_q$. 
Given any homomorphism $\phi \in \Hom(\Z[1/q], \mu_{q^\infty})$, we can evaluate it at $1/q^n$ to obtain an element $x_n = \phi(1/q^n) \in \Q_q/\Z_q$. Multiplying by $q^n$ yields a well-defined element:
\[ z_n = q^n x_n \in \Q_q / q^n \Z_q. 
\]
Because $\phi$ is a homomorphism, we have $q x_{n+1} = x_n$, which implies:
\[ 
z_{n+1} = q^{n+1} x_{n+1} = q^n (q x_{n+1}) = q^n x_n = z_n \pmod{q^n \Z_q}. 
\]
Thus, the sequence $(z_n)$ defines a unique element in the inverse limit $\varprojlim (\Q_q / q^n \Z_q)$, which is exactly the additive group $\Q_q$. This provides an isomorphism between $\Hom(\Z[1/q], \mu_{q^\infty})$ and $\Q_q$.

Thus, the space of non-trivial local morphisms $\mathcal{M}_q=\operatorname{Hom}_{loc}(\Fone[T^{H_q^+}], HF)\setminus\{1\}$ is parameterized exactly by the non-zero elements $\Q_q \setminus \{0\} = \Q_q^\times$. To obtain the geometric points, we must quotient by the canonical symmetries of the stalk, which is $\operatorname{Aut}(\Q_q) \cong \Q_q^\times$. The action of $\Q_q^\times$ on $\mathcal{M}_q$ corresponds to the multiplicative action of $\Q_q^\times$ on itself, which is strictly transitive. Therefore, the quotient $\mathcal{M}_q / \Q_q^\times$ collapses to a single trivial point and so there is no continuous geometry at $q \neq p$.\vspace{.03in}

\emph{Case 2: The generic point $x = \eta$.} 

The stalk of $\mathcal O_{\Fone}$ at $x=\eta$ is $\Fone$, corresponding to the group $H = \{0\}$. By Proposition \ref{classpla}, the set of places $\Sigma_H$ is empty. There is no dense embedding into any local field, meaning the locality condition cannot be fulfilled. Thus, the generic point yields no continuous local geometry.\vspace{.03in}

\emph{Case 3: The closed point $x = p$.} 

The stalk is here $\mathcal{F}_p = \Fone[T^{\Z[1/p]_+}]$. As established in Theorem \ref{main}, the locality condition $\lim_{n \to \infty} y^{p^n} = 1$ is highly non-trivial because the Frobenius map $z \mapsto z^p$ is strictly contracting on $1 + \mathfrak{m}_F$ (since $|y^p - 1| = |y - 1|^p$). 
Thus, the non-trivial local $F$-points at $p$ form exactly the open unit disk $0 < |y - 1| < 1$. 

By Corollaries \ref{luriecor3} and \ref{luriecor4}, quotienting this space of local morphisms by the canonical  symmetries of the stalk ($\operatorname{Aut}(\Q_p) \cong \Q_p^\times$) yields exactly the untilts of $F$ modulo Frobenius, which are the closed points of the Fargues-Fontaine curve. Moreover the trivial local $F$-point $y=1$ corresponds by Remark \ref{2perf1} to the trivial untilt of $F$, and this corresponds precisely to the inclusion of this trivial untilt in the Scholze heuristic.
\end{proof}

Theorem~\ref{scholze_heuristic} provides a  geometric validation of Scholze's heuristic. The \emph{$\Fone$-arithmetic curve} $\spzf$
acts as an absolute geometric sieve. When probed by a field of characteristic $p$, the ultrametric dynamics and the canonical topos symmetries rigidly collapse the geometry at all primes $q \neq p$, reducing each of their fibers to a single, trivial point. On the other hand, the geometry at the prime $p$ completely opens up: the $\Fone$-structure automatically generates the cyclotomic embeddings, and the fiber reveals the infinite-dimensional moduli space related to the set of closed points of the Fargues-Fontaine curve (including the trivial untilt as required by Scholze's heuristic).

\subsection{The universal tilting functor and the geometric sieve}

Let $C$ be a perfectoid field of residue characteristic $p$.
Theorem~\ref{main} provides a far-reaching extension of Scholze's heuristic principle.

The prime $p$ plays a distinguished role, giving rise to the full moduli space of untilts. We now contrast this behavior with that of the primes $\ell\neq p$, where the geometry collapses. The resulting dichotomy yields the global geometric sieve. \vspace{.03in}

Throughout the remainder of this part, we assume that $C$ is algebraically closed.\vspace{.05in}

\emph{Case 1: The mismatched primes ($\ell = q \neq p$).} \\
Because $q$ is coprime to $p$, we have $|q|_C = 1$. For any element $x = 1+y \in 1 + \mathfrak{m}_C$, the binomial expansion yields $x^q = 1 + qy + \dots + y^q$. Since $|q|_C=1$ and $|y|_C<1$, the strictly dominant term is $qy$, meaning $|x^q - 1|_C = |x - 1|_C$. 

Thus, the $q$-th power map is an \emph{isometry} on the principal units. An isometric sequence $x_0^{q^n}$ cannot converge to $1$ unless it is eventually exactly equal to $1$. Therefore, continuity forces $x_0 \in \mu_{q^\infty}(C)$. The analytic topology at $q \neq p$ rigidly collapses, yielding the exact same moduli space as if $C$ were endowed with the discrete topology:
\[
\operatorname{Hom}_{\text{cont}}(\Z[1/q]_+, C) \simeq \operatorname{Hom}_{\text{group}}(H_q, \mu_{q^\infty}(C))
\]

\emph{Case 2: The matched prime ($\ell = p$).} \\
By Theorem \ref{main}, the continuous geometric points at $p$ form the open unit disk $D = 1 + \mathfrak{m}_{C^\flat}$. Removing the trivial point $x_0=1$ (which corresponds to the characteristic $p$ untilt $C^\flat$) and quotienting by the equivariant $\Q_p^\times$-action yields exactly the set of closed points of the Fargues--Fontaine curve $\mathcal{X}_{C^\flat, \Q_p}$.\vspace{.03in}

We have thus established the following   theorem, which serves as the absolute realization of Scholze's paradigm.

\begin{theorem}
Let $C$ be an algebraically closed perfectoid field of residue characteristic $p$. Evaluating the  $\Fone$-curve $\spzf$ over $C$ acts as a strict geometric sieve:
\begin{enumerate}
    \item At every prime $\ell \neq p$, the ultrametric mismatch forces the analytic topology to behave discretely, collapsing the moduli space to a single $\Q_\ell^\times$-orbit.
    \item At the prime $p$, the $\Fone$-stalk acts as a universal tilting functor. The moduli $\mathcal M_p/\Q_p^\times$ of (local) geometric points coincides with the moduli space of all untilts of $C^\flat$ up to equivalence (\ie the set of the closed points of the Fargues--Fontaine curve and $C^\flat$ itself).
\end{enumerate}
\end{theorem}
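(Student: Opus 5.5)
The plan is to treat the two parts of the statement separately. Part 1 is the case $\ell\neq p$, handled by the isometry argument of the preceding discussion. Part 2 is the case $\ell=p$, obtained by combining Theorem~\ref{main} with the Lurie correspondence from Proposition~\ref{lurieprop2} and Corollaries~\ref{luriecor3}, \ref{luriecor4}, transported from $F$ to $C^\flat$.

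\emph{Step 1, the primes $\ell\neq p$.} A point at $\ell$ is a monoid homomorphism $\chi:\Z[1/\ell]_+\to C$, given by a compatible sequence $x_n=\chi(1/\ell^n)$ with $x_n^\ell=x_{n-1}$. It is local when $\chi$ is continuous for $\tau_\ell$; by the local-morphisms proposition this is equivalent to continuous extension to $\Q_\ell$. Continuity forces $x_0^{\ell^n}\to 1$. The same trichotomy on $|x_0|$ used in the proof of Theorem~\ref{main}(i) then puts $x_0$ in $1+\mathfrak m_C$. Since $|\ell|_C=1$, the $\ell$-th power map is an isometry on $1+\mathfrak m_C$, so the sequence can reach $1$ only if it is eventually constant. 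Hence $x_0$, and therefore every $x_n$, lies in $\mu_{\ell^\infty}(C)$.

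Because $C$ is algebraically closed of residue characteristic $p\neq\ell$, the group $\mu_{\ell^\infty}(C)$ is discrete and abstractly isomorphic to $\Q_\ell/\Z_\ell$. The inverse-limit argument from Case~1 of the proof of Theorem~\ref{scholze_heuristic} then gives
\[
\Hom(\Z[1/\ell],\mu_{\ell^\infty})\cong\Q_\ell .
\]
I will check that this identification is $\Q_\ell^\times$-equivariant, with $\operatorname{Aut}(\Q_\ell)$ acting by precomposition on one side and by multiplication on the other. It follows that the nontrivial points form a single free $\Q_\ell^\times$-orbit $\Q_\ell^\times$, and the moduli collapses to one orbit.

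\emph{Step 2, the prime $p$.} By Theorem~\ref{main}(ii),(iii), the map $\beta$ gives a $\Q_p^\times$-equivariant bijection $\Pt_p(\spzf,C)\cong 1+\mathfrak m_{C^\flat}$. Here $p$ acts by Frobenius and $\Z_p^\times$ acts by exponentiation. I then set $F:=C^\flat$, which is algebraically closed and perfectoid of characteristic $p$. On $1+\mathfrak m_F$, the action $\psi_x\mapsto\psi_{x^a}$ of Section~\ref{ffcurve} is exactly the action transported by $\beta$, so the statement reduces to Corollary~\ref{luriecor4} applied to $F$. That corollary identifies the nontrivial points modulo $\Q_p^\times$ with the closed points of $\mathcal X_{C^\flat,\Q_p}$, that is, characteristic-zero untilts modulo Frobenius. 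For the trivial point $x=1$, Remark~\ref{2perf1} shows it corresponds to $\xi=p$ and hence to the untilt $C^\flat$ itself; it is a fixed point of the $\Q_p^\times$-action. Taking the union gives all untilts of $C^\flat$ up to equivalence.

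\emph{Main obstacle.} The delicate step is justifying that every characteristic-zero untilt of $C^\flat$ appears in Corollary~\ref{luriecor3}. That corollary only yields untilts containing $\mu_{p^\infty}$, so I must argue this restriction is vacuous. Every untilt $C'$ of an algebraically closed $F$ is itself algebraically closed, a standard fact from \cite{Scholze}, so it contains all $p$-power roots of unity and admits a cyclotomic embedding $u$. I would cite this fact rather than reprove it.

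A secondary point is to make sure that "modulo Frobenius" matches on both sides. I will check that the Frobenius twist of the identification $\iota$ corresponds to $x\mapsto x^p$ under the formula $x=\iota^{-1}u^\flat(\epsilon)$. The thing to verify is that replacing $\iota$ by $\iota\circ\varphi^{-1}$ changes $x$ to $x^p$, up to the convention for the direction of Frobenius.
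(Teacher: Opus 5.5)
\textbf{Verdict: right route, but Step~1 contains a false step.} Your approach is the paper's. At $\ell\neq p$ it uses the isometry of $z\mapsto z^\ell$ on principal units, then the inverse-limit identification $\Hom(\Z[1/\ell],\mu_{\ell^\infty})\cong\Q_\ell$ from Case~1 of Theorem~\ref{scholze_heuristic}. At $p$ it transports Theorem~\ref{main} to $F=C^\flat$, applies Corollary~\ref{luriecor4}, and handles the trivial point with Remark~\ref{2perf1}. Step~2 is sound. Your remark that untilts of an algebraically closed $F$ are algebraically closed, so that the $\mu_{p^\infty}$ proviso in Corollary~\ref{luriecor3} is vacuous, supplies something the paper leaves implicit.

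\textbf{The false step.} The trichotomy of Theorem~\ref{main}(i) does not carry over to $\ell\neq p$. Its third case uses that $a\mapsto a^p$ is injective on the residue field $k$. The map $a\mapsto a^\ell$ is not injective on $k$, because $k$ is algebraically closed of characteristic $p\neq\ell$ and so contains all $\ell$-power roots of unity. For instance, $x_0=\zeta_\ell$ has $|x_0-1|=1$, yet $x_0^{\ell^n}=1$ for $n\geq 1$. So it defines a local point with $x_0\notin 1+\mathfrak m_C$.

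\textbf{Why it matters.} Every nontrivial element of $\mu_{\ell^\infty}(C)$ lies at distance $1$ from $1$. Your two claims together ($x_0\in 1+\mathfrak m_C$, and the isometry) would therefore force $x_0=1$. The local points would then be the homomorphisms killing $\Z$, namely $\Hom(\Z[1/\ell]/\Z,\mu_{\ell^\infty})\cong\Z_\ell$. But $\Z_\ell\setminus\{0\}$ is not even $\Q_\ell^\times$-stable, let alone a single orbit. So the derivation as written contradicts the conclusion you then state.

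\textbf{The repair.} Apply the isometry to the tail of the sequence rather than to $x_0$. Convergence $x_0^{\ell^n}\to 1$ gives some $N$ with $x_0^{\ell^N}\in 1+\mathfrak m_C$. Then $|x_0^{\ell^n}-1|=|x_0^{\ell^N}-1|$ for all $n\geq N$, which forces $x_0^{\ell^N}=1$. This is how the paper's sentence ``an isometric sequence cannot converge to $1$ unless it is eventually equal to $1$'' should be read.

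Conversely, every homomorphism $\Z[1/\ell]\to\mu_{\ell^\infty}(C)$ kills $\ell^N\Z=\ell^N\Z_\ell\cap\Z[1/\ell]$ for some $N$, and this subgroup is open for $\tau_\ell$. Hence all of $\Hom(\Z[1/\ell],\mu_{\ell^\infty})\cong\Q_\ell$ is local, and the single-orbit statement follows once you check the equivariance as planned.
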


\section{Points of $\spzf$ over $\C$}\label{sect4}

Having analyzed the geometric points of the $\Fone$-curve $\spzf=(\spz,\mathcal F)$ over perfectoid fields, we now investigate its evaluation over the field of complex numbers $\C$. At a fixed prime $p$, the absolute geometry naturally detects two places: the non-archimedean place $p$ and the archimedean place $\infty$. The space of non-trivial geometric points at $p$,
\[
\mathcal{M}_{p}:=\operatorname{Hom}_{\mathrm{loc}}(\Fone[T^{H_p^+}],H\C)\setminus\{1\},
\]
carries a natural symmetry structure whose precise description requires some care.

A first attempt to define a Frobenius-type action would be to let a complex number $w\in\C^\times$ act on a continuous local morphism
\[
\mathcal M_p\ni\rho:H_p^+\to\C^\times
\]
by the rule:
$(w\cdot\rho)(t)=\rho(t)^w$.
However, as soon as $w\notin\Z$, complex exponentiation becomes multivalued, rendering this expression ill-defined on $\C^\times$.

The key observation is that this ambiguity is canonically eliminated by the locality condition satisfied by $\rho$, namely continuity with respect to a place
$v\in\Sigma_{H_p}=\{p,\infty\}$.
As we shall see in this section, locality selects the appropriate branch structure and leads to a well-defined extension of the Frobenius symmetries.\vspace{0.03in}

In \S\ref{sectarch}, we prove that when $v=\infty$, the archimedean topology forces every local morphism $\rho$ to factor through the universal covering map
$
\exp:\C\longrightarrow\C^\times$.
Consequently, $\rho$ admits a unique lift of the form
$
\widetilde{\rho}(t)=zt$,
$z\in\C$, 
and the action of an element $w\in\C^\times$ becomes the ordinary scalar action on the Lie algebra:
$w\cdot\widetilde{\rho}(t)=(wz)t$.
Transporting this action through the exponential map yields a canonical and unambiguous action of the local Weil group
$
W_\infty=\C^\times
$
on the moduli space of archimedean points $\mathcal M_p^\infty$.

This action reveals a deeper geometric structure: the moduli spaces of points of $\spzf$ naturally arise as torsors (principal homogeneous spaces) under suitable Weil groups. More precisely, we shall see that the space
$\mathcal M_p^\infty$
of non-trivial archimedean points is parametrized by
\[
\widetilde{\C^\times}\setminus\{0\},
\]
which forms a canonical torsor over $W_\infty$. Two fundamental geometric quotients emerge from this structure:
\begin{enumerate}
\item \emph{The Tate curve $E_p$.}
The complex Tate curve
$E_p:=\mathcal M_p^\infty/p^\Z$
inherits the torsor structure and becomes a principal homogeneous space under the quotient group
$W_\infty/p^\Z$.

\item \emph{The projective line $\mathbb P^1(\R)$.}
The continuous symmetries of the completed stalk are encoded by the subgroup
$\R^\times=W_\infty^\sigma$,
where $\sigma\in\operatorname{Aut}(W_\infty)$ denotes complex conjugation. The quotient
\[
\mathbb P^1(\R)
\cong
(\widetilde{\C^\times}\setminus\{0\})/\R^\times
\]
therefore inherits a natural torsor structure under
$W_\infty/W_\infty^\sigma
\cong S^1$.
\end{enumerate}

In \S \ref{sectcpadic} we show that for the place $v=p$ the ambiguity is resolved canonically by the local Weil group $W_p=\Q_p^\times$, and that the non-trivial points  $\mathcal{M}_{p}^{p}$ form a single orbit of this group.

\subsection{Geometric $\C$-points  local for  $\infty\in \Sigma_{H_p}$}\label{sectarch}

 We first determine the moduli space $\mathcal M_p^\infty$ of  points local for the place $\infty$.
 By Proposition \ref{prop:monoid_structure_morphisms}, any such point corresponds uniquely to a monoid homomorphism:
\[
\rho : (H_p^+, +) \longrightarrow (\C, \cdot)
\]
which is continuous with respect to the archimedean topology on $H_p^+$, induced by the standard inclusion $H_p\subset \R$, and the standard Euclidean topology on $\C$.

\begin{proposition}\label{prop:archimedean_points_C}
$(i)$~The map 
\[
\rho:\C \to \Pt_p^\infty(\spzf,\C),\quad \rho(z) : (H_p^+, +) \longrightarrow (\C, \cdot), \ \ \rho(z)(t):=\exp(zt), \ \forall t \in H_p^+,
\] defines a canonical bijection of $\C$ with the  space of  $\C$-points of $\spzf$ at $p$, local for the archimedean place $\infty$.

$(ii)$~Under the bijection defined in $(i)$, the action of $\R^\times=\Aut(\R)$ corresponds to the scaling action on $\C$, and the Frobenius action of $p \in p^\Z$ on the stalk corresponds to multiplication by $p$.

$(iii)$~The moduli space $\mathcal{M}_{p}^{\infty}$ of non-trivial archimedean points  forms a canonical torsor over the local Weil group $W_\infty=\C^\times$.
\end{proposition}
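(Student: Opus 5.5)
The plan is to work throughout with the extension property for local morphisms established earlier. A point $\chi:(H_p^+,+)\to(\C,\cdot)$ that is local at $\infty$ is continuous for the topology induced by $H_p\subset\R$.

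\textbf{Step 1 (landing in $\C^\times$).} First we show $\chi$ takes values in $\C^\times$. If $\chi(t_0)=0$ for some $t_0>0$, then for every $t\in H_p^+$ choose $n$ with $p^n t>t_0$. Then $\chi(t)^{p^n}=\chi(p^nt)=\chi(t_0)\chi(p^nt-t_0)=0$, so $\chi$ vanishes on $H_p^+\setminus\{0\}$. But $t=p^{-k}\to 0$ in $\R$, so continuity forces $\chi(p^{-k})\to\chi(0)=1$, a contradiction. Hence $\chi$ extends to a group homomorphism $H_p\to\C^\times$. It is continuous at $0$, hence uniformly continuous. Since $\C^\times$ is a complete Hausdorff topological group, the extension proposition gives a unique continuous homomorphism $\tilde\chi:\R\to\C^\times$.

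\textbf{Step 2 (lifting through $\exp$).} Since $\R$ is simply connected and $\exp:\C\to\C^\times$ is a covering homomorphism, $\tilde\chi$ lifts uniquely to a continuous homomorphism $L:\R\to\C$ with $L(0)=0$. A continuous additive map $\R\to\C$ is $\R$-linear, so $L(t)=zt$ with $z=L(1)$. This yields $\chi(t)=\exp(zt)$. Injectivity of $z\mapsto\rho(z)$ follows from uniqueness of the lift: if $\exp(zt)=\exp(z't)$ on the dense set $H_p^+$, then the same holds on $\R$, and the continuous lifts $zt$ and $z't$ agree at $t=0$, so $z=z'$. Conversely, each $\exp(zt)$ is clearly a continuous monoid homomorphism, which gives surjectivity and proves $(i)$. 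The trivial point $\mathbf 1$ corresponds to $z=0$.

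\textbf{Step 3 (the actions).} For $(ii)$, an element $\lambda\in\R^\times=\Aut(\R)$ acts by precomposition, $\tilde\chi\mapsto\tilde\chi(\lambda\,\cdot)$. This sends $\exp(zt)$ to $\exp(\lambda z t)$, i.e. $z\mapsto\lambda z$. In particular the Frobenius $p\in p^\Z$ acts by $z\mapsto pz$, matching $\chi(t)\mapsto\chi(pt)=\chi(t)^p$.

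\textbf{Step 4 (the torsor).} For $(iii)$, we define the action of $w\in W_\infty=\C^\times$ on $\mathcal M_p^\infty$ through the lift, $w\cdot\rho(z):=\rho(wz)$. This extends the integer power maps $\chi\mapsto\chi^w$ for $w\in\N^\times$ and the $\R^\times$ action of $(ii)$. Since $\mathcal M_p^\infty\cong\C\setminus\{0\}$ by $(i)$, and $\C^\times$ acts on $\C\setminus\{0\}$ by multiplication simply transitively, $\mathcal M_p^\infty$ is a principal homogeneous space. The main point needing care is well-definedness: the action must not depend on a choice of logarithm. This is exactly what the uniqueness of the lift in Step 2 guarantees, because the lift is canonically attached to $\chi$ via archimedean continuity, whereas without locality $\exp(zt)$ on $H_p^+$ alone would not determine $z$.
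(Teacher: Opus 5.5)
Your proof is correct and follows the paper's route: nonvanishing from continuity at $0$, extension by density to a continuous homomorphism $\R\to\C^\times$, the unique lift through $\exp$ giving $\exp(zt)$, precomposition giving $z\mapsto\lambda z$, and the $W_\infty$-action transported through the lift. The differences are minor: you show $\chi$ vanishes on all of $H_p^+\setminus\{0\}$ instead of using $t_0/p^n$ directly, and you invoke the paper's group-extension proposition instead of passing through $\R_{\geq 0}$; also, the fact that each $\exp(zt)$ is a point is well-definedness of the map, while surjectivity is what Steps 1--2 establish.
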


\begin{proof}
$(i)$~Let $\rho : H_p^+ \to \C$ be a continuous monoid homomorphism. By definition, $\rho(0) = 1$. Because $\rho$ is continuous at $0$, there exists a neighborhood of $0$ in $H_p^+$ such that $\rho(t) \neq 0$ for all $t$ in this neighborhood. 

We claim that $\rho(t) \neq 0$ for all $t \in H_p^+$. Suppose for contradiction that $\rho(t_0) = 0$ for some $t_0 > 0$. For any integer $n \ge 1$, the homomorphism property implies:
\[
\left( \rho\left(\frac{t_0}{p^n}\right) \right)^{p^n} = \rho(t_0) = 0 \implies \rho\left(\frac{t_0}{p^n}\right) = 0.
\]
However, in the archimedean topology, the sequence $t_0/p^n$ converges to $0$ as $n \to \infty$. Continuity then demands that $\rho(t_0/p^n) \to \rho(0) = 1$, which contradicts $\rho(t_0/p^n) = 0$. Thus, $\rho$ never vanishes, and its image lies entirely in $\C^\times$.

Because $H_p^+$ is dense in $\R_{\ge 0}$, the continuous homomorphism $\rho$ extends uniquely to a continuous monoid homomorphism $\tilde{\rho} : \R_{\ge 0} \to \C^\times$. Furthermore, since $\R_{\ge 0}$ generates the group $\R$ and $\C^\times$ is a group, $\tilde{\rho}$ extends uniquely to a continuous group homomorphism from $\R$ to $\C^\times$. 

By standard Lie group theory, any continuous one-parameter subgroup of $\C^\times$ lifts through the universal covering map $\exp : \C \to \C^\times$. Therefore, there exists a unique complex number $z \in \C$ such that for all $t \in H_p^+$:
\[
\rho(t) = \exp(zt).
\]
This establishes a canonical bijection between the continuous morphisms $\rho$ and the parameter space $\C$.

Finally, we determine the action of the canonical symmetries. The  action of $\lambda \in \R^\times$ on the completed stalk is given by pre-composition with multiplication by $\lambda$. Acting on the morphism $\rho_z(t) = \exp(zt)$, we obtain:
\[
(\lambda \cdot \rho_z)(t) = \rho_z(\lambda \,t) = \exp(z(\lambda\,t)) = \exp((\lambda\,z)t) = \rho_{\lambda\,z}(t).
\]
This shows $(ii)$, in particular the Frobenius action on the stalk translates exactly to multiplication by $p$ on the parameter space $\C$.\newline
$(iii)$~By $(i)$, the scaling action of $W=\C^\times$ on $\mathcal{M}_{p}^{\infty}$ comes from the canonical action of $W$ on the universal cover of $\C^\times$. While the isomorphism given in $(i)$ depends on the choice of the covering $\exp:\C\to \C^\times$ the action of $W$ is independent of this choice, and $\mathcal{M}_{p}^{\infty}$ is a principal homogeneous space for this action.
\end{proof}

\begin{remark}\label{complexan}
It is important to emphasize that the moduli space of $\C$-points  is not merely a topological space, but it canonically inherits a complex analytic structure directly from the target field. For any $t \in H_p^+$, the pointwise evaluation defines a map from the moduli space of continuous homomorphisms to $\C$:
\[
\operatorname{ev}_t : \operatorname{Hom}_{\text{cont}}(H_p^+, \C) \longrightarrow \C, \quad \rho \longmapsto \rho(t).
\]
By demanding that these evaluation maps be holomorphic, the moduli space is canonically endowed with a complex analytic structure. Under the parameterization $\rho_z(t) = \exp(zt)$, the evaluation maps $\operatorname{ev}_t(z) = \exp(zt)$ are entire functions, demonstrating that this induced complex structure coincides exactly with the standard complex structure of the plane $\C$. 
\end{remark}

\subsubsection{Quotient by the automorphisms of the completed stalk}

At the archimedean place $\infty\in\Sigma_{H_p}$, the completion of the group $H_p$ is the real line $\R$, whose full group of continuous scaling symmetries is $\R^\times$. The completion of the stalk $H_p^+$ is the positive real line $\R_{\geq 0}$ and this reduces the symmetry group to the subgroup $\R_+^\times \subset \R^\times$.
It is natural to compare the geometric quotient
\[
\Pt_p^\infty(\spzf,\C)/\R_+^\times
\]
with the non-archimedean construction of the closed points of the Fargues–Fontaine curve. The trivial point of the moduli space corresponds to $z=0$, namely to the neutral element $\mathbf 1$. Removing this point leaves the moduli space of non-trivial points
\[
\mathcal M_p^\infty
=
\Pt_p^\infty(\spzf,\C)\setminus\{0\}
\cong
\C^\times.
\]

By Proposition~\ref{prop:archimedean_points_C}(ii), the symmetry group $\R^\times$ acts on the parameter space $\C^\times$ by ordinary real scalar multiplication. Taking the quotient by this full continuous symmetry group yields the orbit space
\[
\mathcal M_p^\infty/\R^\times
\cong
\C^\times/\R^\times.
\]

Geometrically, this quotient parametrizes the real lines through the origin in the plane $\C\cong\R^2$, and is therefore naturally identified with the real projective line
$\mathbb P^1(\R)$. But the automorphism group of the completed stalk is the subgroup $\R_+^\times\subset \R^\times$ and the analogue of the closed points of the Fargues-Fontaine curve is the double cover of $\mathbb P^1(\R)$ 
given by the following archimedean analogue of Corollary~\ref{luriecor4}.

\begin{corollary}\label{luriearch}
$(i)$~The quotient space $\mathcal{X}_\infty :=\mathcal{M}_{p}^{\infty} / \R^\times\cong \mathbb{P}^1(\R)$  forms a canonical torsor over $W / W^\sigma$.\newline
$(ii)$~The quotient space $\widetilde{\mathcal{X}}_\infty:=\mathcal{M}_{p}^{\infty} / \R_+^\times$ by the automorphism group of the completed stalk, is the  unramified double cover of the projective line   $\mathcal{X}_\infty  \cong \mathbb{P}^1(\R)$, with the deck transformation given by the  element $-1\in W_\infty$.
\end{corollary}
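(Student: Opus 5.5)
The plan is to work entirely through the parametrization of Proposition~\ref{prop:archimedean_points_C}. By part $(i)$ of that proposition, $\mathcal M_p^\infty$ is identified with $\C^\times$ via $z\mapsto\rho_z$, $\rho_z(t)=\exp(zt)$. By part $(ii)$, $\lambda\in\R^\times$ acts by $\rho_z\mapsto\rho_{\lambda z}$. By part $(iii)$, the Weil group $W=W_\infty=\C^\times$ acts simply transitively by $w\cdot\rho_z=\rho_{wz}$. The $W$-action and the $\R^\times$-action are restrictions of the same multiplicative action on $\C^\times$, and $W$ is commutative. Hence every quotient of $\mathcal M_p^\infty$ by a subgroup $K\subset W$ inherits a simply transitive action of $W/K$. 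This is the general principle: a torsor under an abelian group $W$, divided by a subgroup $K$, is a torsor under $W/K$.

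For $(i)$, I will first identify the subgroup fixed by complex conjugation, $W^\sigma=\R^\times$, which is exactly the group $\Aut(\R)$ of scalings of the completed stalk. Applying the principle with $K=W^\sigma$ shows that $\mathcal X_\infty=\mathcal M_p^\infty/\R^\times$ is a $W/W^\sigma$-torsor. To realize the identification with $\mathbb P^1(\R)$ concretely, I will send the class of $z$ to the real line $\R z\subset\C\cong\R^2$. This is a bijection of $\C^\times/\R^\times$ with $\mathbb P^1(\R)$. The polar decomposition $z=re^{i\theta}$ gives $W/W^\sigma\cong S^1/\{\pm1\}\cong S^1$, acting by rotation of lines. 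I will also observe, as in Proposition~\ref{prop:archimedean_points_C}$(iii)$, that the torsor structure does not depend on the choice of $\exp$: changing the branch rescales $z$ by an element of $W$, which commutes with the action.

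For $(ii)$, I apply the same principle with $K=\R_+^\times$, so that $\widetilde{\mathcal X}_\infty=\C^\times/\R_+^\times\cong S^1$ via $z\mapsto z/|z|$, and it is a $W/\R_+^\times\cong S^1$-torsor. The natural map $\widetilde{\mathcal X}_\infty\to\mathcal X_\infty$ is the quotient by $\R^\times/\R_+^\times=\{\pm1\}$. This group acts freely on $\C^\times/\R_+^\times$, since $-z=\lambda z$ with $\lambda>0$ is impossible for $z\ne0$. Its nontrivial element is the image of $-1\in W_\infty$. So $S^1\to S^1/\{\pm1\}$, $\theta\mapsto\theta \bmod \pi$, is a free $\Z/2$-quotient of a compact manifold. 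It is therefore an unramified, connected double cover with deck transformation $-1$.

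The main obstacle is not computational but definitional: one must justify that $\R_+^\times$, rather than $\R^\times$, is the automorphism group of the completed stalk. This holds because the stalk is the monoid $H_p^+$, whose completion is $\R_{\ge0}$, and a scaling preserves $\R_{\ge0}$ exactly when $\lambda>0$. One must also make sure that the action of $-1$ is well defined on $\mathcal M_p^\infty$ even though $\rho_{-z}(t)=\exp(-zt)$ is not obtained by precomposition on $H_p^+$. Here I would use $(iii)$ of Proposition~\ref{prop:archimedean_points_C}: the $W$-action is defined through the lift to the universal cover and not through the stalk, so $-1$ acts intrinsically.
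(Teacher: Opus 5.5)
Your proposal is correct and follows essentially the same route as the paper, which argues from the surrounding discussion rather than a formal proof. You use the parametrization $\mathcal M_p^\infty\cong\C^\times$ and the $W_\infty$-torsor structure of Proposition~\ref{prop:archimedean_points_C}, pass to the quotients by $W_\infty^\sigma=\R^\times$ and by $\R_+^\times$ (the latter justified, as in the paper, by the completed stalk being $\R_{\geq 0}$), and then identify $\C^\times/\R^\times$ with $\mathbb P^1(\R)$ and $\C^\times/\R_+^\times\cong S^1$ with its double cover, whose deck transformation is $-1$. Your concern about $-1$ is easier to settle than you suggest: $\rho_{-z}=\rho_z^{-1}$, which is precomposition with $-1$ on the Grothendieck group $H_p$, so it is visibly a local point.
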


Thus, the same construction underlies both the non-archimedean and archimedean settings. Namely, after removing the trivial point and quotienting by the automorphism group of the completed stalk, one obtains the space of closed points of the Fargues–Fontaine curve in the perfectoid case, while over $\C$ one recovers the double cover $\tilde{\mathcal{X}}_\infty$ of the projective line. 

\subsubsection{The  Tate curve $E_p$}

The  Tate curve $E_p=\C^\times / p^\Z$ arises from quotienting the space of  non-trivial local morphisms $\mathcal{M}_{p}^{\infty}$  by the discrete arithmetic symmetry group $p^\Z$ of the uncompleted stalk $H_p$ generated by the  Frobenius action $z \mapsto pz$. This yields  the complex elliptic curve:
\[
E_p =\mathcal{M}_{p}^{\infty}/p^\Z\cong \C^\times / p^\Z.
\]
The isomorphism in display follows from Proposition \ref{prop:archimedean_points_C}, $(i)$. By $(iii)$ the space  $E_p = \mathcal{M}_{p}^{\infty}/ p^\Z$ forms a torsor over the quotient group $W_\infty / p^\Z$.

By lifting via the exponential map $w \mapsto \exp(2\pi i w)$, we see that $E_p \cong \C / \Lambda_p$, where the lattice is $\Lambda_p = \Z \oplus \Z \tau$, with modular parameter $\tau = i \frac{\log p}{2\pi}$. This is exactly the complex analytic realization of the Tate curve with parameter $q = 1/p$. Because $\tau$ is purely imaginary, $E_p$ is a rectangular torus defined over $\R$.\vspace{.03in}

It follows from Remark~\ref{complexan} that the space of non-trivial points $\mathcal{M}_{p}^{\infty}\cong\C^\times$ carries a canonical structure of complex manifold, and that the Frobenius action
$z\mapsto pz$
acts by bi-holomorphisms. Consequently, the quotient
$
E_p=\mathcal{M}_{p}^{\infty}/p^\Z
$
inherits a canonical structure of Riemann surface.\vspace{.03in}

This constitutes a significant refinement of the underlying $\Fone$-geometry. The absolute geometry produces not merely a topological torus, but a rigid complex-analytic object that intrinsically encodes the invariants of the associated elliptic curve, including the modular parameter
$\tau=i\frac{\log p}{2\pi}$
and the corresponding $j$-invariant. The canonical complex structure on $E_p$ therefore promotes the quotient naturally to an algebraic curve over $\C$.

Since $E_p$ is a compact Riemann surface, one may consider its field of global meromorphic functions. These are precisely the multiplicatively periodic meromorphic functions on $\C^\times$ satisfying
$f(pz)=f(z)$.
This field is an algebraic function field of transcendence degree one over $\C$, generated by the Weierstrass $\wp$-function and its derivative. The algebraic relations among these generators determine a projective algebraic curve, whose structure sheaf is recovered by considering meromorphic functions with prescribed finite sets of poles and passing to the corresponding localizations.

Thus, the evaluation of $\Fone$-morphisms yields considerably more than an analytic space: it canonically reconstructs the full algebraic curve structure of the Tate curve. Moreover, the torsor structure of $E_p$ under its group of translations is precisely the one induced by the action of
\[
W_\infty/p^\Z=\C^\times/p^\Z
\]
described above.

\subsection{The real structure}
The symmetries of the archimedean points are further enriched by the arithmetic of the target field $\C$. The complex conjugation $\sigma \in \operatorname{Gal}(\C/\R)$ acts naturally on the moduli space of geometric points by conjugating the target values: $\rho^\sigma(t) = \overline{\rho(t)}$. On the parameter space $\mathcal{M}_{p}^{\infty}\cong\C^\times$, this corresponds to standard complex conjugation $z \mapsto \bar{z}$. 

Crucially, because the Frobenius action (multiplication by $p$) is strictly real, it commutes with $\sigma$. Consequently, complex conjugation descends to an \emph{anti-holomorphic involution} on the Tate curve $E_p =\mathcal{M}_{p}^{\infty}/p^\Z\cong \C^\times / p^\Z$, endowing it with a canonical real structure. We may explicitly determine the real locus $E_p(\R)$ by finding the fixed points of $\sigma$ on the quotient. A class $[z]$ is fixed if and only if $\bar{z} = p^n z$ for some integer $n$. Taking absolute values forces $p^n = 1$, hence $n=0$ and so $z \in \R^\times$. 
The real locus of the Tate curve is therefore:
\[
E_p(\R) = \R^\times / p^\Z.
\]
Because $\R^\times$ decomposes multiplicatively into positive and negative reals, $\R^\times = \R_+^\times \sqcup \R_-^\times$, and the Frobenius action preserves the sign, the real locus splits into two disjoint connected components:
\[
E_p(\R) = (\R_+^\times / p^\Z) \sqcup (\R_-^\times / p^\Z).
\]
Geometrically, this reveals that the real points of the Tate curve consist of  two canonical copies of the periodic orbit $C_p = \R_+^\times / p^\Z$. 

\subsection{Rectangular decomposition of $E_p$}

The relationship between the Tate curve $E_p$, the periodic orbit $C_p$, and the analogue of the Fargues--Fontaine curve: 
$$
\widetilde{\mathcal{X}}_\infty:=\mathcal{M}_{p}^{\infty}/\R_+^\times,
$$ 
can be made completely explicit by analyzing the exact sequence of the underlying symmetry groups.
Consider the exact sequence associated to the positive real scaling symmetries:
\[
1 \to \R_+^\times \to \C^\times \to \C^\times/\R_+^\times \to 1.
\]
This sequence is canonically split by the absolute value map $z \mapsto |z|$, yielding the polar decomposition $\C^\times \cong \R_+^\times \times S^1$. 

Because the Frobenius generator $p$ is a positive real number, its action on $\C^\times$ is entirely confined to the $\R_+^\times$ factor. Consequently, the geometric quotient defining the Tate curve splits globally as a product of two real $1$-dimensional manifolds:
\[
E_p = \mathcal{M}_{p}^{\infty} / p^\Z = (\R_+^\times / p^\Z) \times \widetilde{\mathcal{X}}_\infty = C_p \times \widetilde{\mathcal{X}}_\infty.
\]
This canonical decomposition precisely isolates the arithmetic data from the geometric data. The first factor is  the periodic orbit $C_p$ (from the adelic scaling site), which carries the entire dependence on the prime $p$ (having length $\log p$). The second factor $\widetilde{\mathcal{X}}_\infty$ is a purely geometric phase circle that is totally independent of $p$, and has been described in Corollary \ref{luriearch}(ii).

The product decomposition $E_p=C_p \times \widetilde{\mathcal{X}}_\infty$ explains the origin of the rectangular complex structure of the Tate curve. Writing a point in $E_p$ in polar coordinates as $z = \lambda e^{i\theta}$, the canonical nowhere-vanishing holomorphic $1$-form on the elliptic curve is given by the logarithmic differential:
\[
\frac{dz}{z} = \frac{d\lambda}{\lambda} + i d\theta.
\]
This decomposition reveals that the complex structure on $E_p$ is uniquely and canonically determined by intertwining the invariant real $1$-forms (the Haar measures) of its two constituent Lie groups: namely, the invariant measure $d\lambda/\lambda$ on the arithmetic orbit $C_p$, and the invariant measure $d\theta$ on the geometric phase circle $\widetilde{\mathcal{X}}_\infty$ of length $2\pi$. The modular parameter of the elliptic curve, $\tau = i \frac{\log p}{2\pi}$, is therefore exactly the ratio of the integrals of these two canonical real differentials over their respective fundamental domains. We summarize this finding in the following:

\begin{thm}[Geometric Decomposition of the  Tate Curve]\label{thm:tate_decomposition}
The complex Tate curve $E_p$ of \eqref{tatec}, 
 admits a canonical global decomposition as a product of two real $1$-dimensional manifolds, perfectly isolating the arithmetic  from the geometric data:
\[
E_p \cong C_p \times \tilde{\mathcal{X}}_\infty.
\]
The first factor of this decomposition is the  periodic orbit $C_p = \R_+^\times / p^\Z$ of the adelic scaling site. It constitutes the connected component of the real locus of $E_p$ and carries the entire dependence on the prime $p$.  The second factor $\widetilde{\mathcal{X}}_\infty$ is the ($p$-independent) real analogue of the Fargues-Fontaine-curve.\newline
Furthermore, this product decomposition canonically determines the rectangular complex structure of the Tate curve. The canonical nowhere-vanishing holomorphic $1$-form on $E_p$ is given by:
\[
\omega = \frac{d\lambda}{\lambda} + i d\theta
\]
which uniquely intertwines the invariant real Haar measure $d\lambda/\lambda$ of the arithmetic orbit $C_p$ with the invariant real Haar measure $d\theta$ of the $p$-independent geometric factor $\widetilde{\mathcal{X}}_\infty$.
\end{thm}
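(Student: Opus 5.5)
The plan is to work entirely in the parameterization of Proposition~\ref{prop:archimedean_points_C}, which identifies $\mathcal M_p^\infty$ with $\C^\times$ via $z\mapsto\rho_z$, $\rho_z(t)=\exp(zt)$. Under this identification the Frobenius acts by $z\mapsto pz$ and $\R_+^\times$ acts by real scaling. Hence $E_p=\C^\times/p^\Z$ and $\widetilde{\mathcal X}_\infty=\C^\times/\R_+^\times$.

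\textbf{Step 1: the product decomposition.} The homomorphism $z\mapsto(|z|,z/|z|)$ gives the canonical splitting $\C^\times\cong\R_+^\times\times S^1$ of the sequence $1\to\R_+^\times\to\C^\times\to\C^\times/\R_+^\times\to1$. Since $p\in\R_+^\times$, the subgroup $p^\Z$ lies in the first factor. Therefore
\[
\C^\times/p^\Z\cong(\R_+^\times/p^\Z)\times S^1,
\]
and the second factor $S^1=\C^\times/\R_+^\times$ is exactly $\widetilde{\mathcal X}_\infty$ by Corollary~\ref{luriearch}(ii).

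I would also check that this decomposition does not depend on the choice of $\exp$ used in Proposition~\ref{prop:archimedean_points_C}. The $W_\infty$-torsor structure is intrinsic, and the splitting by $|\cdot|$ is intrinsic to $W_\infty=\C^\times$. A torsor has no canonical base point, so the decomposition is canonical only up to this torsor ambiguity, namely equivariantly for $W_\infty/p^\Z$.

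\textbf{Step 2: the real locus.} For the identification of $C_p$ with a component of the real locus, I would invoke the computation in the real-structure subsection: $E_p(\R)=\R^\times/p^\Z$, which has the two components $\R_\pm^\times/p^\Z$. The component $\R_+^\times/p^\Z$ is precisely the slice $C_p\times\{1\}$ of the product. Its length $\log p$, and the independence of $p$ for the second factor, are then immediate.

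\textbf{Step 3: the $1$-form.} On $\C^\times$ the invariant holomorphic form $dz/z$ is invariant under $z\mapsto pz$, so it descends to a nowhere-vanishing holomorphic form on $E_p$. Since $H^0(E_p,\Omega^1)$ is one-dimensional, this form is unique up to a scalar, and it is normalized by being the Maurer--Cartan form of the group $W_\infty/p^\Z$. Writing $z=\lambda e^{i\theta}$, one computes $\log z=\log\lambda+i\theta$, hence $dz/z=d\lambda/\lambda+i\,d\theta$. Here $d\lambda/\lambda$ is the pullback of the Haar form on $C_p$ and $d\theta$ is the pullback of the Haar form on $\widetilde{\mathcal X}_\infty$. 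The periods are $\log p$ and $2\pi i$, and their ratio recovers $\tau=i\log p/2\pi$.

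\textbf{Main obstacle.} The delicate part is making ``canonical'' and ``uniquely intertwines'' precise, since the decomposition is a statement about torsors and not about groups. I would phrase the result as an isomorphism of $W_\infty/p^\Z\cong(\R_+^\times/p^\Z)\times S^1$-torsors. The uniqueness of $\omega$ would then be stated up to the normalization by the Maurer--Cartan form, with the identification of the two Haar measures coming from the canonical splitting by the absolute value.
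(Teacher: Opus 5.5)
Your proposal is correct and follows essentially the same route as the paper: the canonical polar splitting $\C^\times\cong\R_+^\times\times S^1$ via $z\mapsto|z|$, the observation that $p^\Z$ lies in the radial factor so that $E_p\cong(\R_+^\times/p^\Z)\times\widetilde{\mathcal X}_\infty$, the real locus $E_p(\R)=\R^\times/p^\Z$ taken from the real-structure computation, and the polar-coordinate identity $dz/z=d\lambda/\lambda+i\,d\theta$. Your additional remarks, namely uniqueness of $\omega$ up to a scalar from $\dim H^0(E_p,\Omega^1)=1$ and the torsor caveat on what ``canonical'' means, are refinements the paper leaves implicit rather than a different argument.
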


\subsection{Geometric $\C$-points  local for  $p\in \Sigma_{H_p}$}\label{sectcpadic}

The absolute nature of the $\Fone$-stalk $\mathcal F_p=\Fone[T^{H_p^+}]$ ($H_p^+ = \Z[1/p]_+$) allows one to evaluate it over the complex numbers $\C$ using, this time, the $p$-adic topology on the stalk. This corresponds to looking at the non-archimedean place $p$ with values in $\C$. The change in topology on the domain (with respect to the archimedean case discussed before) completely transforms the moduli space, revealing a profound symmetry.

\begin{proposition}\label{prop:nonarchimedean_points_C}
Let  $H_p^+$ be endowed with the $p$-adic topology, and $\C$ with the standard Euclidean topology. The moduli space $\mathcal M_p^p$ of continuous local $\C$-points of $\spzf$  is canonically in bijection with the $p$-adic field $\Q_p$. Under this bijection, the Frobenius action of $p \in p^\Z$ on the stalk corresponds to multiplication by $p$ on $\Q_p$.
\end{proposition}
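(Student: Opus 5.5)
Locality at $p$ means $\rho\colon H_p^+\to\C$ extends, via the Grothendieck group and the locality proposition, to a continuous homomorphism $\tilde\rho\colon\Q_p\to\C^\times$. So the task is to classify continuous characters of $(\Q_p,+)$ with values in $\C^\times$, and then to identify them with $\Q_p$ by Pontryagin self-duality.

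First I will show that $\rho$ never vanishes. If $\rho(t_0)=0$ with $t_0>0$, then $\rho(p^nt_0)=\rho(t_0)^{p^n}=0$. But $p^nt_0\to0$ $p$-adically, so continuity forces $\rho(p^nt_0)\to1$, a contradiction. Hence $\rho$ lands in $\C^\times$ and extends uniquely to $H_p=\Z[1/p]$, and then, by density and completeness, to $\tilde\rho\colon\Q_p\to\C^\times$.

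Next I will show that $\tilde\rho$ is unitary and of finite order on compact pieces. The image $\tilde\rho(\Z_p)$ is a compact subgroup of $\C^\times$, so it lies in $S^1$; since $\Q_p=\bigcup p^{-n}\Z_p$, the whole image lies in $S^1$. Moreover $\Z_p$ is profinite and $S^1$ has no small subgroups: a neighborhood of $1$ contains no nontrivial subgroup. Continuity then gives $\tilde\rho(p^N\Z_p)=1$ for some $N$. So $\tilde\rho$ factors through $\Q_p/p^N\Z_p$, which is a $p$-torsion group, and its image lies in $\mu_{p^\infty}(\C)$. This is the step I expect to be the main obstacle, or at least the one needing care: the no-small-subgroups argument must be applied correctly. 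Once it is in place, the problem reduces to the discrete case, $\Hom_{\Grp}(H_p,\mu_{p^\infty}(\C))$ with the continuity constraint automatically satisfied.

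To identify this group with $\Q_p$ canonically, I fix the standard character $\psi_p(x)=\exp(2\pi i\{x\}_p)$, where $\{x\}_p$ is the $p$-adic fractional part. I claim $a\mapsto\psi_p(a\,\cdot)$ is a bijection $\Q_p\to\Hom_{\mathrm{cont}}(\Q_p,\C^\times)$. For surjectivity I have two options. One is to cite Tate's thesis / Weil's Basic Number Theory, where $\Q_p$ is shown to be self-dual. The other is to argue directly as in Case 1 of Theorem~\ref{scholze_heuristic}: evaluate at $1/p^n$, multiply by $p^n$, and obtain a compatible system in $\Q_p/p^n\Z_p$, whose limit is the element $a$. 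Injectivity holds because $\psi_p$ is nontrivial and $a\ne0$ makes $a\Q_p=\Q_p$. Restricting to $H_p^+$ loses nothing, by density and the uniqueness of the extension.

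Finally, the Frobenius. The element $p\in p^\Z$ acts by $(p\cdot\rho)(t)=\rho(pt)$, which is $\psi_p(apt)$. This is exactly the character attached to $pa$, so the action is multiplication by $p$ on $\Q_p$, and more generally $\Q_p^\times$ acts by multiplication. Removing $a=0$ then shows that the nontrivial points form a single $\Q_p^\times$-torsor, in line with Theorem~\ref{WeilGroupAction}.
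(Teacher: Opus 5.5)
Your proof is correct. Its skeleton matches the paper's: show the image lies in $\mu_{p^\infty}(\C)$, identify $\Hom(H_p,\mu_{p^\infty}(\C))$ with $\Q_p$, then read off Frobenius. The key torsion step, however, is done by a different mechanism.

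The paper never leaves the uncompleted monoid. It sets $z=\rho(1)$ and uses only $\rho(p^n)=z^{p^n}\to 1$. It writes $z=\exp(2\pi i\theta)$, tacitly using that $|z|^{p^n}\to 1$ forces $|z|=1$. It then concludes $\theta\in\Z[1/p]$ because $\theta\mapsto p\theta$ is expanding near $0$ on $\R/\Z$. Finally it identifies $\Hom(H_p,\Q_p/\Z_p)$ with $\Q_p$ by the inverse-limit argument you list as your second option. Your $\psi_p$ amounts to the paper's choice of isomorphism $\mu_{p^\infty}(\C)\cong\Q_p/\Z_p$, so the two bijections coincide.

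You instead first pass to $\tilde\rho$ on $\Q_p$ via the locality proposition. You then get unitarity from compactness of the subgroups $p^{-n}\Z_p$, and an open kernel from the absence of small subgroups in $S^1$.

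The paper's route is more elementary: it needs neither the extension to $\Q_p$ nor any Lie-theoretic input, only the single sequence $p^n\to 0$. Yours is the standard structural classification of continuous characters of $(\Q_p,+)$. It makes explicit several points the paper leaves implicit:
\begin{itemize}
\item the non-vanishing of $\rho$;
\item the converse direction, since each $\psi_p(a\,\cdot)$ restricted to $H_p^+$ is visibly $p$-adically continuous, so every element of $\Q_p$ really gives a local point;
\item equivariance for all of $\Q_p^\times$, not just $p^\Z$, which is what the torsor statement of Theorem~\ref{WeilGroupAction} needs.
\end{itemize}
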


\begin{proof}
Let $\rho : H_p^+ \to \C$ be a continuous monoid homomorphism. The $p$-adic topology on $H_p^+$ dictates that $p^n \to 0$ as $n \to \infty$. Continuity therefore requires that $\rho(p^n) \to \rho(0) = 1\in \C$. 

Let $z = \rho(1) \in \C$. The continuity condition implies $z^{p^n} \to 1$. Writing $z = \exp(2\pi i \theta)$ for some $\theta \in \R/\Z$, this requires $p^n \theta \to 0 \pmod 1$. However, the multiplication-by-$p$ map on $\R/\Z$ strictly expands distances in a neighborhood of $0$. A sequence cannot converge to $0$ under a strictly expanding map unless it is eventually exactly $0$. Thus, there exists an integer $N$ such that $p^N \theta \equiv 0 \pmod 1$, meaning $\theta \in \Z[1/p]$. Consequently, $z$ is a $p$-power root of unity. 

Because $\rho(1/p^k)^{p^k} = z$, every element in the image of $\rho$ must also be a $p$-power root of unity. The image of $\rho$ thus lies entirely within $\mu_{p^\infty}(\C)$. Because the target of $\rho$ is a group, $\rho$ extends uniquely to a group homomorphism  $H_p = \Z[1/p]\to \mu_{p^\infty}(\C)$. 

Fixing a canonical isomorphism $\mu_{p^\infty}(\C) \cong \Q_p/\Z_p$, the moduli space is given by $$\operatorname{Hom}(H_p, \Q_p/\Z_p)\simeq \Q_p.$$ 

Finally, the Frobenius action of $p$ on the stalk is given by $(p \cdot \rho)(t) = \rho(pt)$. On the defining sequence, this yields $x_n \mapsto \rho(p/p^n) = x_{n-1} = p x_n$. Under the inverse limit isomorphism, this corresponds exactly to multiplication by $p$ on $\Q_p$.
\end{proof}

\section{Outlook}\label{outlook}

In \cite{CC3}, we developed the characteristic-$1$ geometry of the periodic orbit $C_p$ as an analogue of an elliptic curve. The identification of the absolute $\Fone$-geometry as a universal receptacle for the closed points of the Fargues–Fontaine curve naturally leads to a functorial connection with the characteristic-$1$ (equivalently, idempotent) geometry of the scaling site $\mathscr S$ introduced in \cite{CC3}.

Recall that the scaling site
$\mathscr S=([0,\infty)\rtimes\N^\times,\mathcal O)$
is a semiringed topos whose underlying space is the half-line $[0,\infty)$ endowed with the multiplicative action of $\N^\times$. Its structure sheaf $\mathcal O$ consists of continuous, convex, piecewise affine functions with integral slopes.

A remarkable feature of the analytic geometry of the Fargues–Fontaine curve is that it admits a canonical tropicalization into this characteristic-$1$ framework.

Indeed, analytic functions on the covering adic space of the Fargues–Fontaine curve are obtained by completing the ring (we follow the notation of \cite{FF})
$\mathbf A_{\inf}\!\left[\frac{1}{p},\frac{1}{[\varpi]}\right]$
with respect to a family of Gauss valuations $v_s$, where the parameter
$s\in(0,\infty)$
plays the role of a logarithmic radius. The behavior of these valuations is governed by a fundamental result of $p$-adic Hodge theory (\cf \cite[Lecture 11, Proposition 5]{Lurie}):

\begin{prop} Let $f$ be a nonzero element of $\mathbf{A}_{\inf}\left[\frac{1}{p}, \frac{1}{[\varpi]}\right]$. Then the map $s \mapsto v_s(f)$ determines a function
\[
v_{\bullet}(f): (0, \infty) \longrightarrow \R
\]
which is piecewise linear with integer slopes. Moreover, it is concave (that is, the slopes are decreasing).
\end{prop}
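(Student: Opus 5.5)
The plan is to make the Gauss valuations explicit through Teichm\"uller expansions. Then $v_\bullet(f)$ becomes a lower envelope of affine functions with integer slopes, and both concavity and piecewise linearity follow from local finiteness of that envelope.

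\emph{Step 1: Teichm\"uller expansion.} Let $F$ be the perfectoid field of characteristic $p$ and $\varpi\in\mathfrak m_F\setminus\{0\}$ a pseudo-uniformizer. Since $\mathcal O_F$ is perfect, every $g\in\mathbf A_{\inf}=W(\mathcal O_F)$ has a unique expansion $g=\sum_{n\ge 0}[a_n]p^n$ with $a_n\in\mathcal O_F$. Inverting $p$ and $[\varpi]$, I would deduce that every $f\in\mathbf A_{\inf}[\frac1p,\frac1{[\varpi]}]$ has a unique expansion
\[
f=\sum_{n\ge -N}[a_n]p^n,\qquad a_n\in F,\qquad |a_n|\le |\varpi|^{-M}\ \text{for all } n,
\]
for some integers $N,M$. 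Concretely, write $f=[\varpi]^{-M}p^{-N}g$ with $g\in\mathbf A_{\inf}$, expand $g$, and use $[\varpi]^{-M}[b]=[\varpi^{-M}b]$. Uniqueness of the expansion in $W(F)$ gives independence of the chosen representation. If $f\neq0$, some $a_n\neq 0$.

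\emph{Step 2: the formula for $v_s$.} With the normalization of \cite{FF}, I take
\[
v_s(f)=\inf_{n}\bigl(v_F(a_n)+ns\bigr),
\]
where $v_F=-\log|\cdot|_F$ (up to the normalizing constant) and terms with $a_n=0$ are omitted. If the paper's convention scales $v_F$ or $s$ differently, the argument is unchanged up to that rescaling. Each term $s\mapsto v_F(a_n)+ns$ is affine with integer slope $n$. Therefore $v_\bullet(f)$, being an infimum of affine functions, is automatically concave wherever it is finite.

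\emph{Step 3: local finiteness, the main point.} I must show that on every compact interval $[s_0,s_1]\subset(0,\infty)$ the infimum is attained and only finitely many $n$ contribute; this is where the boundedness from Step 1 is essential. Fix $n_0$ with $a_{n_0}\neq0$. For $s\in[s_0,s_1]$ we have $v_F(a_n)+ns\ge -M'+ns_0$, where $M'$ corresponds to the bound $|a_n|\le|\varpi|^{-M}$. This exceeds $\max_{s\in[s_0,s_1]}\bigl(v_F(a_{n_0})+n_0s\bigr)$ as soon as $n$ is larger than some explicit bound. The indices are also bounded below by $-N$. Hence on $[s_0,s_1]$ the function $v_\bullet(f)$ is the minimum of finitely many affine functions with integer slopes. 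It is therefore finite, continuous and piecewise linear there, with finitely many breakpoints. Its slopes are integers and decrease with $s$, because a minimum of affine functions switches to terms of smaller slope as $s$ increases. Covering $(0,\infty)$ by such intervals gives the statement.

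I expect the only delicate point to be Step 1. One must justify that every element of $\mathbf A_{\inf}[\frac1p,\frac1{[\varpi]}]$ has coefficients bounded uniformly in $n$; this bound is what makes Step 3 work. One must also check that the formula in Step 2 agrees with the definition of the Gauss valuation used in \cite{FF} and \cite{Lurie}. The concavity and the integrality of the slopes are formal once this is in place.
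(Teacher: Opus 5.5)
Your proposal is correct. Writing $f=\sum_{n\ge -N}[a_n]p^n$ with uniformly bounded coefficients $a_n\in F$ makes $v_\bullet(f)$ the lower envelope of the affine functions $s\mapsto v_F(a_n)+ns$, and the bound $v_F(a_n)\ge -M'$ together with $s\ge s_0>0$ makes this envelope locally a minimum of finitely many such functions, which gives concavity, local piecewise linearity and decreasing integer slopes. The paper gives no proof of its own and only cites Lurie, Lecture~11, Proposition~5 (going back to Fargues--Fontaine), whose argument is this same standard one, so your proposal matches it.
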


This proposition provides the analytic bridge to characteristic-$1$ geometry. Indeed, applying a global sign change yields the tropicalization map
$\tau(f)(s):=-v_s(f)$.
Since the valuation profile $v_\bullet(f)$ is concave, piecewise linear, and has integral slopes, it follows that $\tau(f)$ is convex, piecewise affine, and again has integral slopes.

Consequently, for every nonzero analytic function $f$ in the uncompleted period ring, the tropicalization $\tau(f)$ defines a global section of the characteristic-$1$ structure sheaf $\mathcal O$ of the scaling site $\mathscr S$. In this way, the family of logarithmic Gauss valuations gives rise to a canonical functor from characteristic-$0$ analytic geometry to the idempotent geometry of the scaling site.

Thus, the scaling site over the Boolean semifield $\B$ emerges as the canonical tropicalization of the covering space of the Fargues–Fontaine curve.\vspace{.05in}

A natural next step is to investigate the behavior of Frobenius eigenspaces, such as $B^{\varphi=p}$ in \cite{FF}, under descent to the periodic orbit
$C_p=\R_+^\times/p^\Z$,
and to clarify their relationship with the elliptic-curve geometry developed in \cite{CC3}. We leave this question for future work.


\begin{tabular}{p{0.48\textwidth} p{0.48\textwidth}}
\raggedright
Alain Connes\\
Coll\`ege de France\\
3 Rue d'Ulm\\
75005 Paris\\
France\\
\href{mailto:alain@connes.org}{alain@connes.org}
&
\raggedleft
Caterina Consani\\
Department of Mathematics\\
Johns Hopkins University\\
Baltimore, MD 21218\\
USA\\
\href{mailto:cconsan1@jhu.edu}{cconsan1@jhu.edu}
\end{tabular}
\end{document}